\newtheorem{lemma}{Lemma}
\newtheorem{Thm}{Theorem}
\newtheorem{remark}{Remark}
\newtheorem{definition}{Definition}
\newtheorem{hyp}{Hypothesis}
\font\tenmsb=msbm10 \textfont\msbfam=\tenmsb \font\sevenmsb=msbm7
\font\fivemsb=msbm5
\numberwithin{equation}{section}
\numberwithin{lemma}{section}
\numberwithin{Thm}{section}
\numberwithin{remark}{section}
\numberwithin{hyp}{section}
\numberwithin{definition}{section}
\providecommand{\keywords}[1]{\textbf{\textbf{Keywords:}} #1}
\begin{document}
\bigbreak

\title{\bf Large deviation principle for the two-dimensional stochastic Navier-Stokes equations with anisotropic viscosity \thanks{Research supported in part  by NSFC (No.11771037) and Key Lab of Random Complex Structures and Data Science, Chinese Academy of Science. Financial supported by the DFG through the CRC 1283 "Taming uncertainty and profiting from randomness and low regularity in analysis, stochastics and their applications" is acknowledged.}}
\author[1,2]{\bf Bingguang Chen\thanks{bchen@math.uni-bielefeld.de}}
\author[1]{\bf Xiangchan Zhu\thanks{Corresponding author: zhuxiangchan@126.com}}
\date{}
\affil[1]{Academy of Mathematics and System Science, Chinese Academy of Science, Beijing 100190, China }
\affil[2]{Department of Mathematics, University of Bielefeld, D-33615 Bielefeld, Germany}
\renewcommand*{\Affilfont}{\small\it}
\renewcommand\Authand{, }

\maketitle

\begin{abstract}
In this paper we establish the large deviation principle for the the two-dimensional stochastic Navier-Stokes equations with anisotropic viscosity both for small noise and for short time. The proof for  large deviation principle is based on the weak convergence approach. For small time asymptotics we use the exponential equivalence to prove the result. 
\end{abstract}

\keywords{Large deviation principle; Stochastic Navier-Stokes equations; Anisotropic viscosity; Small time asymptotics;  Weak convergence approach }
\vspace{2mm}

\section{Introduction}
The main aim of this work is to establish large deviation principle and small time asymptotics  for the stochastic Navier-Stokes equation with anisotropic viscosity. We consider the following stochastic Navier-Stokes equation with anisotropic viscosity on the two dimensional (2D) torus $\mathbb{T}^2=\mathbb{R}^2/(2\pi\mathbb{Z})^2$:
\begin{equation}\label{equation before projection}\aligned
&du=\partial^2_1 udt-u\cdot \nabla u dt+\sigma(t, u)dW(t)-\nabla pdt,\\
&\text{div } u=0,\\
&u(0)=u_0,
\endaligned
\end{equation}
where $u(t,x)$ denotes the velocity field at time $t\in[0,T]$ and position $x\in\mathbb{T}^2$, $p$ denotes the pressure field, $\sigma$ is the random external force and $W$ is an $l^2$-cylindrical Wiener process. 

 Let's first recall the classical Navier-Stokes (N-S) equation which is given by
\begin{equation}\label{equation classical}\aligned
&du=\nu\Delta udt-u\cdot \nabla u dt-\nabla pdt,\\
&\text{div } u=0,\\
&u(0)=u_0,
\endaligned
\end{equation}
where $\nu>0$ is the viscosity of the fluid. (\ref{equation classical}) describes the time evolution of an incompressible fluid. In 1934, J. Leray proved global existence of finite energy weak solutions for the deterministic case in the whole space $\mathbb{R}^d$ for $d=2, 3$ in the seminar paper \cite{L33}. For more results on deterministic N-S equation, we refer to \cite{CKN86}, \cite{Te79}, \cite{Te95}, \cite{KT01} and reference therein. For the stochastic case, there exists a great amount of literature too. The existence and uniqueness of solutions and ergodicity property to the stochatic 2D Navier-Stokes equation have been obtained (see e.g. \cite{FG95}, 
\cite{MR05}, \cite{HM06}). Large deviation principles for the two-dimensional stochastic N-S equations  have been established in \cite{CM10} and \cite{SS06}.

Compared to \eqref{equation classical}, \eqref{equation before projection} only has partial dissipation, which can be viewed as an intermediate equation between N-S equation and Euler equation.  System of this type appear in in geophysical fluids (see for instance \cite{CDGG06} and \cite{Ped79}). Instead of putting the classical viscosity $-\nu\Delta$ in (\ref{equation classical}), meteorologist often modelize turbulent diffusion by putting a viscosity of the form: $-\nu_h\Delta_h-\nu_3\partial^2_{x_3}$, where $\nu_h$ and $\nu_3$ are empiric constants, and $\nu_3$ is usually much smaller than $\nu_h$. We refer to the book of J. Pedlovsky \cite[Chapter 4]{Ped79} for a more complete discussion. For the 3 dimensional case there is no result concerning global existence of weak solutions. 

In the 2D case, \cite{LZZ18} investigates both the deterministic system  and the stochastic system (\ref{equation before projection}) for $H^{0,1}$ initial value (For the definition of space see Section 2). The main difference in obtaining the global well-posedness for \eqref{equation before projection} is that the $L^2$-norm estimate is not enough to establish $L^2([0,T], L^2)$ strong convergence due to lack of compactness. In \cite{LZZ18}, the proof is based on an additional $H^{0,1}$-norm estimate.  In this paper, we want to establish the large deviation principles for the two-dimensional stochastic Navier-Stokes equations with anisotropic viscosity both for small noise and for short time.  

The large deviation theory concerns the asymptotic behavior of a family of random variables $X_\varepsilon$ and we refer to the monographs \cite{DZ92} and \cite{Str84} for many historical remarks and extensive references. It asserts that for some tail or extreme event $A$, $P(X_\varepsilon \in A)$ converges to zero exponentially fast as $\varepsilon\rightarrow 0$ and the exact rate of convergence is given by the so-called rate function. The large deviation principle was first established by Varadhan in \cite{V66} and he also studied the small time asymptotics of finite dimensional diffusion processes in \cite{Var67}. Since then, many important results concerning the large deviation principle have been established. For results on the large deviation principle for stochastic differential equations in finite dimensional case we refer to \cite{FW84}. For the extensions to infinite dimensional diffusions or SPDE, we refer the readers to \cite{BDM08},  \cite{CM10},  \cite{DM09}, \cite{Liu09}, \cite{LRZ13}, \cite{RZ08}, \cite{XZ08}, \cite{Zh00} and the references therein. 

We first study  the small noise large deviations by using the weak convergence approach. This approach is mainly based on a variational representation formula for certain functionals of infinite dimensional Brownian Motion, which was established by Budhiraja and Dupuis in \cite{BD00}. The main advantage of the weak convergence approach is that one can avoid some exponential probability estimates, which might be very difficult to derive for many infinite dimensional models. To use the weak convergence approach, we need to prove two conditions in Hypothesis \ref{Hyp}. In \cite{Liu09} and \cite{LRZ13}, the authors use integration by parts and lead to some extra condition on diffusion coefficient. In \cite{CM10}, the authors use time discretization and require time-regularity of diffusion coefficient.  In this paper, we use the argument  in \cite{WZZ}, in which the authors  prove a moderate deviation principle by this argument, i.e.  we first establish the convergence in  $L^2([0,T],L^2)$  and then by using this and It\^o's formula,  $L^\infty([0,T], L^2)\bigcap L^2([0,T], H^{1,0})$ convergence can be obtained.  By this argument, we can drop the extra condition on diffusion coefficient in \cite{Liu09} and \cite{CM10}.

For the small time asymptotics (large deviations) of the two-dimensional stochastic Navier-Stokes equations with anisotropic viscosity.  This describes the limiting behaviour of the solution in time interval $[0,t]$ as $t$ goes to zero. Another motivation will be to get the following Varadhan identity through the small time asymptotics:
$$\lim_{t\rightarrow 0}2t\log P(u(0)\in B, u(t)\in C)=-d^2(B,C),$$
where $d$ is an appropriate Riemannian distance associated with the diffusion generated by the solutions of the two-dimensional stochastic Navier-Stokes equations with anisotropic viscosity.   The small time asymptotics  is also theoretically interesting, since the study involves the investigation of the small  noise and the effect of the small, but highly  nonlinear drift. 

To prove the small time asymptotics, we follow the idea of \cite{XZ08} to prove the solution to (\ref{equation before projection}) is exponentially equivalent to the solution to the linear equation. The main difference compared to \cite{XZ08} is that similar to \cite{LZZ18} $L^2$-norm estimate is not enough due to less dissipation and we have to do $H^{0,1}$-norm estimate.

 \vskip.10in

{\textbf{Organization of the paper}}

In Section 2, we introduce the basic  notation, definition and recall some preliminary results.  In Section 3, we will build the small noise large deviation principle. In Section 4, we prove the small time asymptotics for the the two-dimensional stochastic Navier-Stokes equations with anisotropic viscosity.

 \vskip.10in

{\textbf{Acknowledgement}}

The authors would like to thank Rongchan Zhu and Siyu Liang for helpful discussions.

\section{Preliminary}

%In the following we recall some definitions of function spaces.  
 \vskip.10in
{\textbf{Function spaces on $\mathbb{T}^2$}}

We first recall some definitions of function spaces for the two dimensional torus $\mathbb{T}^2$. 

Let $\mathbb{T}^2=\mathbb{R}/2\pi\mathbb{Z}\times\mathbb{R}/2\pi\mathbb{Z}=(\mathbb{T}_h, \mathbb{T}_v)$ where $h$ stands for the horizonal variable $x_1$ and $v$ stands for the vertical variable $x_2$. For exponents $p,q\in [1, \infty)$, we denote the space $L^p(\mathbb{T}_h, L^q(\mathbb{T}_v))$ by $L^p_h(L^q_v)$, which is endowed with the norm
$$\|u\|_{L^p_h(L^q_v)(\mathbb{T}^2)}:=\{\int_{\mathbb{T}_h}(\int_{\mathbb{T}_v}|u(x_1, x_2)|^qdx_2)^{\frac{p}{q}}dx_1\}^{\frac{1}{p}}.$$

Similar notation for $L^p_v(L^q_h)$. In the case $p, q=\infty$, we denote $L^\infty$ the essential supremum norm. 
Throughout the paper, we denote various positive constants by the same letter $C$.

For $u\in L^2(\mathbb{T}^2)$, we consider the Fourier expansion of $u$:
$$u(x)=\sum_{k\in\mathbb{Z}^2}\hat{u}_ke^{ik\cdot x} \text{ } \text{with} \text{ } \hat{u}_k=\overline{\hat{u}_{-k}},$$
where $\hat{u}_k:=\frac{1}{(2\pi)^2}\int_{[0, 2\pi]\times[0, 2\pi]}u(x)e^{-ik\cdot x}dx$ denotes the Fourier coefficient of $u$ on $\mathbb{T}^2$.

Define the Sobolev norm:
$$\|u\|_{H^s}^2:=\sum_{k\in\mathbb{Z}^2}(1+|k|^2)^s |\hat{u}_k|^2,$$
and the anisotropic Sobolev norm:
$$\|u\|^2_{H^{s, s'}}=\sum_{k\in\mathbb{Z}^2}(1+|k_1|^2)^s(1+|k_2|^2)^{s'}|\hat{u}_k|^2,$$
%$$\|u\|^2_{\dot{H}^{s, s'}}=\sum_{k\in\mathbb{Z}^2}|k_1|^{2s}(1+|k_2|^2)^{s'}|\hat{u}_k|^2,$$
where $k=(k_1, k_2)$. We define the Sobolev spaces $H^s(\mathbb{T}^2)$, $H^{s,s'}(\mathbb{T}^2)$ as the completion of $C^\infty(\mathbb{T}^2)$ with the norms $\|\cdot\|_{H^s}$, $\|\cdot\|_{H^{s,s'}}$ respectively. 

To formulate the stochastic Navier-Stokes equations with anisotropic viscosity, we need the following spaces:
$$H:=\{u\in L^2(\mathbb{T}^2; \mathbb{R}^2); \text{div}\text{ } u=0\},$$
$$V:=\{u\in H^1(\mathbb{T}^2; \mathbb{R}^2); \text{div}\text{ } u=0\},$$
$$\tilde{H}^{s,s'}:=\{u\in H^{s,s'}(\mathbb{T}^2;\mathbb{R}^2); \text{div}\text{ } u=0\}.$$
Moreover, we use $\langle\cdot, \cdot\rangle $ to denote the scalar product (which is also the inner product of $L^2$ and $H$)
$$\langle u, v\rangle =\sum_{j=1}^2\int_{\mathbb{T}^2}u^j(x)v^j(x)dx$$
and $\langle\cdot, \cdot\rangle_X$ to denote the inner product of Hilbert space $X$ where $X=l^2$, $V$ or $\tilde{H}^{s,s'}$.

Due to the divergence free condition, we need the Larey projection operator $P_H: \text{ } L^2(\mathbb{T}^2)\rightarrow H$:
$$P_H: u\mapsto u-\nabla \Delta^{-1}(\text{div } u).$$
 
By applying the operator $P_H$ to (\ref{equation before projection})
we can rewrite the equation in the following form:
\begin{equation}\label{equation after projection}\aligned
&du(t)=\partial^2_1 u(t)dt-B(u(t))dt+\sigma(t, u(t))dW(t),\\
&u(0)=u_0,
\endaligned
\end{equation}
where the nonlinear operator $B(u,v)=P_H(u\cdot \nabla v)$ with the notation $B(u)=B(u,u)$. Here we use the same symbol  $\sigma$ after projection for simplicity.

For $u,v,w\in V$, define
$$b(u,v,w):=\langle B(u,v), w\rangle.$$
We have $b(u,v,w)=-b(u,w,v)$ and $b(u,v,v)=0$.

 We put some estimates of $b$ in the Appendix.  

%\begin{lemma}\label{estimate for b}
%For  $u,v,w\in V$, we have 
%$$|b(u,v,w)|\leqslant C\|u\|_V^\frac{1}{2}\|u\|^\frac{1}{2}_H\|w\|^\frac{1}{2}_V\|w\|^\frac{1}{2}_H\|v\|_V.$$
%\end{lemma}

 \vskip.10in
{\textbf{Large deviation principle}}

We recall the definition of the large deviation principle. For a general introduction to the theory we refer to \cite{DZ92}, \cite{DZ98}.

\begin{definition}[Large deviation principle]
Given a family of probability measures $\{\mu_\varepsilon\}_{\varepsilon>0}$ on a  metric space $(E,\rho)$ and a lower semicontinuous function $I:E\rightarrow [0,\infty]$ not identically equal to $+\infty$. The family $\{\mu_\varepsilon\}$ is said to satisfy the large deviation principle(LDP) with respect to the rate function $I$ if\\
(U) for all closed sets $F\subset E$ we have 
$$\limsup_{\varepsilon\rightarrow 0}\varepsilon\log\mu_\varepsilon(F)\leqslant-\inf_{x\in F}I(x),$$
(L) for all open sets $G\subset E$ we have 
$$\liminf_{\varepsilon\rightarrow 0}\varepsilon\log\mu_\varepsilon(G)\geqslant-\inf_{x\in G}I(x).$$

A family of random variable is said to satisfy large deviation principle if the law of these random variables satisfy large deviation princple.

Moreover, $I$ is a good rate function if its level sets $I_r:=\{x\in E:I(x)\leqslant r\}$ are compact for arbitrary $r\in (0,+\infty)$.
\end{definition}
 \vskip.10in

\begin{definition}[Laplace principle]
A sequence of random variables $\{X^\varepsilon\}$ is said to satisfy the Laplace principle with rate function $I$ if for each bounded continuous real-valued function $h$ defined on $E$
$$\lim_{\varepsilon\rightarrow 0}\varepsilon\log E\left[e^{-\frac{1}{\varepsilon}h(X^\varepsilon)}\right]=-\inf_{x\in E}\{h(x)+I(x)\}.$$
\end{definition}

\iffalse
It is well known that the large deviation principle and the Laplace principle are equivalent if $E$ is a Polish space and the rate function is good.  The equivalence is essentially a consequence of Varadhan's lemma (\cite[Theorem 4.3.1]{DZ98}) and Bryc's converse theorem (\cite[Theorem 4.4.2]{DZ98}). Note that \cite{DZ98} proves that  the equivalence holds when $E$ is a completely regular topological space.
\fi

Given a probabilty space $(\Omega,\mathcal{F},P)$, the random variables $\{Z_\varepsilon\}$ and $\{\overline{Z}_\varepsilon\}$ which take values in $(E,\rho)$ are called exponentially equivalent if for each $\delta>0$, 
$$\lim_{\varepsilon\rightarrow 0}\varepsilon\log P(\rho(Z_\varepsilon,\overline{Z}_\varepsilon)>\delta)=-\infty.$$
\vskip.10in
\begin{lemma}[{\cite[Theorem 4.2.13]{DZ98}}]\label{EXEQ}
If an LDP with a rate function $I(\cdot)$ holds for the random variables $\{Z_\varepsilon\}$, which are exponentially equivalent to $\{\overline{Z}_\varepsilon\}$, then the same LDP holds for $\{\overline{Z}_\varepsilon\}$.
\end{lemma}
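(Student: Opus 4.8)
The plan is to transfer both the upper bound (U) and the lower bound (L) from $\{Z_\varepsilon\}$ to $\{\overline{Z}_\varepsilon\}$ with the \emph{same} rate function $I$ (which remains a good rate function, being unchanged), by a soft sandwiching argument: probabilities of events for $\overline{Z}_\varepsilon$ are controlled from above by probabilities of events for $Z_\varepsilon$ over slightly enlarged sets, and from below by those over slightly shrunk sets, with the error absorbed into $P(\rho(Z_\varepsilon,\overline{Z}_\varepsilon)>\delta)$, which by hypothesis decays super-exponentially. Throughout I would use the elementary identity that, for nonnegative $a_\varepsilon,b_\varepsilon$, one has $\limsup_{\varepsilon\to0}\varepsilon\log(a_\varepsilon+b_\varepsilon)=\max\{\limsup_{\varepsilon\to0}\varepsilon\log a_\varepsilon,\,\limsup_{\varepsilon\to0}\varepsilon\log b_\varepsilon\}$.

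For the upper bound, I would fix a closed set $F\subset E$ and $\delta>0$, and put $F^{\delta}:=\{x\in E:\rho(x,F)\leq\delta\}$, which is again closed. On the event $\{\overline{Z}_\varepsilon\in F\}\cap\{\rho(Z_\varepsilon,\overline{Z}_\varepsilon)\leq\delta\}$ one has $Z_\varepsilon\in F^{\delta}$, so $\{\overline{Z}_\varepsilon\in F\}\subset\{Z_\varepsilon\in F^{\delta}\}\cup\{\rho(Z_\varepsilon,\overline{Z}_\varepsilon)>\delta\}$. Applying the $\limsup$ identity, together with (U) for $\{Z_\varepsilon\}$ and exponential equivalence, yields $\limsup_{\varepsilon\to0}\varepsilon\log P(\overline{Z}_\varepsilon\in F)\leq-\inf_{x\in F^{\delta}}I(x)$ for every $\delta>0$. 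It then remains to let $\delta\downarrow0$: the sets $F^{\delta}$ decrease to $F$ (as $F$ is closed), so $\inf_{F^{\delta}}I$ increases to some limit $L\leq\inf_F I$, and I would check $L\geq\inf_F I$ by choosing $x_n\in F^{1/n}$ with $I(x_n)\leq L+1/n$ and $y_n\in F$ with $\rho(x_n,y_n)\leq1/n$; goodness of $I$ keeps $(x_n)$ eventually inside the compact level set $\{I\leq L+1\}$, so a subsequence of $x_n$ converges to some $x$, which lies in $F$ because then $y_n\to x$, and lower semicontinuity gives $\inf_F I\leq I(x)\leq\liminf_n I(x_n)\leq L$. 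This establishes (U) for $\{\overline{Z}_\varepsilon\}$.

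For the lower bound, I would fix an open set $G\subset E$ and a point $x\in G$ with $I(x)<\infty$, choose $\delta>0$ with the open ball $B(x,2\delta)\subset G$, and note that on $\{Z_\varepsilon\in B(x,\delta)\}\cap\{\rho(Z_\varepsilon,\overline{Z}_\varepsilon)<\delta\}$ one has $\overline{Z}_\varepsilon\in G$; hence $P(\overline{Z}_\varepsilon\in G)\geq P(Z_\varepsilon\in B(x,\delta))-P(\rho(Z_\varepsilon,\overline{Z}_\varepsilon)\geq\delta)$. By (L) for $\{Z_\varepsilon\}$ the first term satisfies $\liminf_{\varepsilon\to0}\varepsilon\log P(Z_\varepsilon\in B(x,\delta))\geq-\inf_{y\in B(x,\delta)}I(y)\geq-I(x)>-\infty$, while $\varepsilon\log$ of the second term tends to $-\infty$; thus for small $\varepsilon$ the subtracted term is at most half of the first, and therefore $\liminf_{\varepsilon\to0}\varepsilon\log P(\overline{Z}_\varepsilon\in G)\geq-I(x)$. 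Taking the supremum over all such $x\in G$ gives (L) for $\{\overline{Z}_\varepsilon\}$, and the proof is complete.

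I do not anticipate any real difficulty: this is the classical exponential equivalence transfer, and all steps are purely soft. The only places where a little care is needed are the passage $\delta\downarrow0$ in the upper bound, which uses that $I$ is a good rate function so that $\inf_{F^{\delta}}I\to\inf_F I$ (a property that can fail for non-good rate functions but holds for the rate functions appearing in this paper), and the elementary fact that $\liminf_{\varepsilon\to0}\varepsilon\log(a_\varepsilon-b_\varepsilon)\geq\liminf_{\varepsilon\to0}\varepsilon\log a_\varepsilon$ whenever $b_\varepsilon$ is super-exponentially smaller than $a_\varepsilon$ — both entirely routine.
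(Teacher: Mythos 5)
Your argument is correct and is essentially the standard proof of the cited result (Dembo--Zeitouni, Theorem 4.2.13); the paper itself gives no proof of this lemma, it simply quotes the theorem, and your sandwiching of $\{\overline{Z}_\varepsilon\in F\}$ between the blown-up/shrunk events for $Z_\varepsilon$ plus the super-exponentially small coupling error is exactly that argument. The one point you rightly flag is real: the step $\inf_{F^{\delta}}I\to\inf_{F}I$ as $\delta\downarrow 0$ genuinely needs $I$ to be a \emph{good} rate function (it can fail otherwise), which is part of the hypotheses of the Dembo--Zeitouni theorem even though the paper's restatement omits the word ``good''; since every rate function to which the lemma is applied in this paper is good, nothing is lost.
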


 \vskip.10in
{\textbf{Existence and uniqueness of  solutions}}

We introduce the precise assumptions on the diffusion coefficient $\sigma$. Given a complete probability space $(\Omega, \mathcal{F}, P)$ with filtration $\{\mathcal{F}_t\}_{t\geqslant 0}$.  Let $L_2(l^2,U)$ denotes the Hilbert-Schmidt norms from $l^2$ to $U$ for a Hilbert space $U$.  We recall the following conditions for $\sigma$ from \cite{LZZ18}:

(i) \textbf{Growth condition}

There exists nonnegative constants $K'_i$, $K_i$, $\tilde{K}_i$ ($i=0,1,2$) such that for every $t\in[0,T]$:
 \vskip.10in
(A0) $\|\sigma(t,u)\|^2_{L_2(l^2, H^{-1})}\leqslant K_0'+K_1'\|u\|^2_{H}$;

(A1) $\|\sigma(t,u)\|^2_{L_2(l^2, H)}\leqslant K_0+K_1\|u\|^2_{H}+K_2\|\partial_1 u\|^2_H$;

(A2) $\|\sigma(t,u)\|^2_{L_2(l^2, H^{0,1})}\leqslant \tilde{K}_0+\tilde{K}_1\|u\|^2_{H^{0,1}}+\tilde{K}_2(\|\partial_1 u\|_H^2+\|\partial_1\partial_2u\|^2_H)$;

 \vskip.10in
(ii)\textbf{Lipschitz condition}

There exists nonnegative constants $L_1, L_2$ such that:
 \vskip.10in
(A3) $\|\sigma(t,u)-\sigma(t,v)\|^2_{L_2(l^2, H)}\leqslant L_1\|u-v\|^2_{H}+L_2\|\partial_1(u-v)\|^2_H$.

 \vskip.10in

The following theorem from \cite{LZZ18} gives the well-posedness of equation (\ref{equation after projection}):

\begin{lemma}[{\cite[Theorem 4.1, Theorem 4.2]{LZZ18}}]\label{ex. and uniq. of solution}
Under the assumptions (A0)-(A3) with $K_2<\frac{2}{11}, \tilde{K}_2<\frac{2}{5}, L_2<\frac{2}{5}$, equation (\ref{equation after projection}) has a unique strong solution $u\in L^\infty([0,T], \tilde{H}^{0,1})\cap L^2([0, T], \tilde{H}^{1,1})\cap C([0,T], H^{-1})$ for $u_0\in\tilde{H}^{0,1}$. 
\end{lemma}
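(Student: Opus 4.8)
\section*{Proof proposal}

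The plan is to establish well-posedness of \eqref{equation after projection} via a Galerkin approximation combined with a monotonicity / compactness argument, following the scheme of \cite{LZZ18}. First I would set up the finite-dimensional projections $P_n$ onto the span of the first $n$ eigenfunctions of the Stokes operator that are compatible with the anisotropic spaces, and consider the system of It\^o SDEs $du_n = P_n\partial_1^2 u_n\,dt - P_nB(u_n)\,dt + P_n\sigma(t,u_n)\,dW$ with $u_n(0)=P_nu_0$. Standard theory for finite-dimensional SDEs with locally Lipschitz coefficients gives a unique local solution; global existence will follow from the a priori estimates below. The key structural facts I would invoke are $b(u,v,v)=0$, which kills the nonlinear term in the energy identity, and the smallness conditions $K_2<\tfrac2{11}$, $\tilde K_2<\tfrac25$ on the diffusion coefficient, which allow the stochastic terms involving $\|\partial_1 u\|_H^2$ and $\|\partial_1\partial_2 u\|_H^2$ to be absorbed into the (partial) dissipation $\|\partial_1 u_n\|_H^2$ coming from $\partial_1^2u_n$.

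The core of the argument is a pair of uniform estimates. Applying It\^o's formula to $\|u_n\|_H^2$ and using (A1) together with the Burkholder--Davis--Gundy inequality, one gets, after absorbing the $K_2\|\partial_1 u_n\|_H^2$ contribution, a bound
\[
E\Big[\sup_{t\le T}\|u_n(t)\|_H^2\Big] + E\int_0^T\|\partial_1 u_n(t)\|_H^2\,dt \le C(u_0,T).
\]
This alone is not enough to pass to the limit in the nonlinearity because $\|\partial_1 u_n\|_H$ controls only the horizontal derivative; as the authors emphasize in the introduction, $L^2([0,T],L^2)$ strong convergence fails from the $L^2$-estimate alone due to lack of vertical compactness. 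So, exactly as in \cite{LZZ18}, I would next apply It\^o's formula to $\|\partial_2 u_n\|_H^2$ (equivalently to the $\tilde H^{0,1}$-norm), using assumption (A2); the nonlinear term now is $b(u_n,\partial_2 u_n,\partial_2 u_n)$-type plus a commutator $\langle \partial_2 B(u_n) - B(u_n,\partial_2 u_n), \partial_2 u_n\rangle$, and here one needs the anisotropic Sobolev / Ladyzhenskaya-type inequalities collected in the Appendix to estimate terms like $\int |\partial_2 u| |u| |\partial_1\partial_2 u|$ by $\varepsilon(\|\partial_1\partial_2 u\|_H^2 + \|\partial_1 u\|_H^2) + C\|\partial_2 u\|_H^2(1+\|u\|_H^2)$. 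Together with the smallness of $\tilde K_2$, this yields
\[
E\Big[\sup_{t\le T}\|u_n(t)\|_{\tilde H^{0,1}}^2\Big] + E\int_0^T\|u_n(t)\|_{\tilde H^{1,1}}^2\,dt \le C(u_0,T),
\]
which is the estimate giving the right solution space $L^\infty([0,T],\tilde H^{0,1})\cap L^2([0,T],\tilde H^{1,1})$. A Gronwall argument applied pathwise (or in expectation) closes both bounds, and they are uniform in $n$.

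With these uniform bounds in hand, the passage to the limit is fairly standard. The $\tilde H^{1,1}$-bound provides compactness in the vertical direction that was missing; combined with a uniform estimate on a fractional time-derivative of $u_n$ in a negative-order space (obtained from the equation, estimating each term — in particular $B(u_n)$ in, say, $H^{-1}$ or $H^{-1,0}$ using the a priori bounds), the Aubin--Lions--Simon lemma gives tightness of the laws of $u_n$ in $L^2([0,T],H)\cap C([0,T],H^{-1})$. I would then either use a stochastic compactness (Skorokhod) argument to extract a martingale solution and upgrade it, or — since the coefficients are Lipschitz by (A3) — argue directly: the bound $L_2<\tfrac25$ lets one run a standard $L^2$-difference estimate $E\|u-v\|_H^2$ between two solutions, again absorbing $L_2\|\partial_1(u-v)\|_H^2$ into the dissipation, to get pathwise uniqueness, and then Yamada--Watanabe (or Gy\"ongy--Krylov) promotes the martingale solution to a unique strong solution with continuous $H^{-1}$-paths. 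Finally, a standard argument shows $u$ is weakly continuous in $\tilde H^{0,1}$, hence (after the energy identity) the solution genuinely lies in $L^\infty([0,T],\tilde H^{0,1})$.

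The main obstacle is the second a priori estimate: controlling the nonlinear term in the $\tilde H^{0,1}$ energy balance. The commutator $\partial_2(u\cdot\nabla u) - u\cdot\nabla\partial_2 u$ produces a term of the form $\partial_2 u\cdot\nabla u$ tested against $\partial_2 u$, which is critical with respect to the available dissipation (we only have $\partial_1$-derivatives and the single vertical derivative $\partial_1\partial_2$ from $\tilde H^{1,1}$, not $\partial_2^2$), so one must split the integral into horizontal and vertical components and use anisotropic interpolation inequalities very carefully to ensure every dangerous factor is either a full $\partial_1\partial_2 u$ or $\partial_1 u$ (absorbable, using $\tilde K_2<\tfrac25$) and the remaining factors involve only $\|u\|_H$ and $\|\partial_2 u\|_H$ (Gronwall-able). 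This is precisely where the anisotropic structure of the equation forces a departure from the classical 2D Navier--Stokes well-posedness proof, and it is the step where the Appendix estimates for $b$ do the real work.
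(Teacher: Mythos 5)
First, note that the paper does not prove this statement at all: Lemma \ref{ex. and uniq. of solution} is quoted verbatim from \cite[Theorems 4.1, 4.2]{LZZ18}, so the only internal point of comparison is the deterministic analogue proved in Lemma \ref{solution to skeleton eq.} (which approximates by adding a vanishing vertical viscosity $\epsilon^2\partial_2^2$ and mollifying $u_0$, rather than by Galerkin truncation) and the stochastic estimates of Lemmas \ref{Girsanov thm to prove existence}--\ref{tightness lemma}. Your overall scheme — $L^2$ energy estimate, an additional $\tilde H^{0,1}$ estimate using the commutator bound of Lemma \ref{estimate for b with partial_2}, absorption of the $K_2,\tilde K_2,L_2$ terms into the $\partial_1$-dissipation, Aubin--Lions compactness, and pathwise uniqueness plus Yamada--Watanabe — is indeed the route the cited proof takes, and you correctly identify the $\tilde H^{0,1}$ step as the one where the anisotropic estimates do the real work.

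There is, however, a concrete gap in how you close the second a priori estimate and the uniqueness estimate. Lemma \ref{estimate for b with partial_2} bounds the commutator by $a\|\partial_1\partial_2u\|_H^2+C\bigl(1+\|\partial_1u\|_H^2\bigr)\|\partial_2u\|_H^2$, so the Gronwall coefficient is the \emph{random} process $1+\|\partial_1u_n(t)\|_H^2$, which is only known to be integrable in $t$ pathwise; it is not bounded by a deterministic constant. Consequently the bound
\begin{equation*}
E\Bigl[\sup_{t\le T}\|u_n(t)\|_{\tilde H^{0,1}}^2\Bigr]+E\int_0^T\|u_n(t)\|_{\tilde H^{1,1}}^2\,dt\le C(u_0,T)
\end{equation*}
cannot be obtained by ``a Gronwall argument applied pathwise (or in expectation)'': pathwise Gronwall does not control the stochastic integral (the BDG step forces you to take expectations), and in expectation Gronwall fails because the coefficient is random. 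The standard repair — and the one this paper uses systematically, e.g.\ the weight $e^{-h(t)}$ in Lemma \ref{estimate eq. for weak convergence}, the sets $K_R$ in Lemma \ref{tightness lemma}, and the weight $e^{-q(t)}$ with $q(t)=k\int_0^t(\|Z^\varepsilon_v\|^2_{\tilde H^{1,1}}+\|v^\varepsilon\|^2_{l^2})ds$ in the uniqueness argument of Lemma \ref{Girsanov thm to prove existence} — is to apply It\^o's formula to $e^{-k\int_0^t(1+\|\partial_1u_n(s)\|_H^2)ds}\|u_n(t)\|^2_{\tilde H^{0,1}}$ (or to use stopping times), obtaining only an exponentially weighted moment bound, which is nevertheless sufficient for tightness and for the limit passage because $\int_0^T\|\partial_1u_n\|_H^2ds$ is controlled by the first estimate. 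The same issue appears in your uniqueness step: the difference estimate produces the random coefficient $1+\|u_2\|^2_{\tilde H^{1,1}}$ via Lemma \ref{anisotropic estimate for b}, so pathwise uniqueness also requires the weighted It\^o formula rather than a plain expected Gronwall. With these localizations inserted, your argument matches the cited proof.
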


 \vskip.10in
{\textbf{A martingale lemma}}

The following remarkable result is from \cite{BY82} and \cite{D76}:
\begin{lemma}\label{martingale lemma}
There exists a universal constant $c$ such that, for any $p\geqslant 2$ and for all continuous martingale $(M_t)$ with $M_0=0$ and stopping times $\tau$,
$$\|M^*_\tau\|_p\leqslant cp^\frac{1}{2}\|\langle M\rangle^\frac{1}{2}_\tau\|_p,$$
where $M^*_t=\sup_{0\leqslant s\leqslant t}|M_s|$ and $\|\cdot\|_p$ stands for the $L^p$ norm with respect to the probability space.
\end{lemma}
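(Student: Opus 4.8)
The stated estimate is the Burkholder--Davis--Gundy inequality for continuous (local) martingales, written so as to exhibit the sharp order $\sqrt p$ of the constant as $p\to\infty$; it goes back to Davis \cite{D76} and Barlow--Yor \cite{BY82}, and the plan is to recall a self-contained proof based on an exponential (Bernstein-type) bound and a good-$\lambda$ inequality. First I would localize: with $\tau_n:=\tau\wedge\inf\{t:\ |M_t|\vee\langle M\rangle_t\ge n\}$ the stopped process $M^{\tau_n}$ is a bounded martingale with bounded bracket, so $\|M^*_{\tau_n}\|_p<\infty$; I would prove the inequality for $M^{\tau_n}$ and then let $n\to\infty$, using $M^*_{\tau_n}\uparrow M^*_\tau$, $\langle M\rangle_{\tau_n}\uparrow\langle M\rangle_\tau$ and monotone convergence (the inequality being trivial once its right-hand side is infinite). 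From here on $M$ may be taken bounded; the point of this reduction is that $\|M^*_\tau\|_p<\infty$, which will legitimate an absorption step below.

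The first ingredient is the exponential bound. For each $\theta\in\mathbb R$ the process $\mathcal E(\theta M)_t=\exp\big(\theta M_t-\tfrac{\theta^2}{2}\langle M\rangle_t\big)$ is a nonnegative local martingale, hence a supermartingale equal to $1$ at time $0$; applying Doob's maximal inequality for nonnegative supermartingales to $\mathcal E(\theta M)$ stopped at $\tau$, evaluating it at the first time $M$ reaches the level $\lambda$, and optimizing over $\theta>0$ (and over $\theta<0$ for the symmetric event) gives
\[
 P\big(M^*_\tau\ge\lambda,\ \langle M\rangle_\tau\le\mu\big)\ \le\ 2e^{-\lambda^2/(2\mu)},\qquad \lambda,\mu>0 .
\]
Running the same argument for the continuous martingale $\big(M_{\rho+s}-M_\rho\big)_{s\ge0}$ after $\rho:=\inf\{t:\ |M_t|\ge\lambda\}$, conditionally on $\mathcal F_\rho$, yields for every $\delta>0$ the good-$\lambda$ inequality
\[
 P\big(M^*_\tau>2\lambda,\ \langle M\rangle_\tau^{1/2}\le\delta\lambda\big)\ \le\ 2e^{-1/(2\delta^2)}\,P\big(M^*_\tau\ge\lambda\big),
\]
since on the left-hand event one has $\rho<\tau$ and $|M_\rho|=\lambda$, so after $\rho$ the increment of $M$ must exceed $\lambda$ while the increment of $\langle M\rangle$ is at most $\delta^2\lambda^2$.

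It remains to integrate the good-$\lambda$ inequality in the classical way. With $X=M^*_\tau$, $Y=\langle M\rangle_\tau^{1/2}$ and $\psi(\delta)=2e^{-1/(2\delta^2)}$ one has $P(X>2\lambda)\le\psi(\delta)P(X>\lambda)+P(Y>\delta\lambda)$; multiplying by $p\lambda^{p-1}$, integrating in $\lambda$ over $(0,\infty)$ and rescaling gives $2^{-p}EX^p\le\psi(\delta)EX^p+\delta^{-p}EY^p$, and since $EX^p<\infty$ one absorbs the first term:
\[
 EX^p\ \le\ \frac{2^{p}\delta^{-p}}{1-2^{p}\psi(\delta)}\,EY^p\qquad\text{whenever } 2^{p}\psi(\delta)<1 .
\]
Choosing $\delta^2=\big(4(p+1)\log 2\big)^{-1}$ makes $2^{p}\psi(\delta)=2^{-(p+1)}\le\tfrac14$ and $\delta^{-p}=\big(4(p+1)\log 2\big)^{p/2}$, so that $(EX^p)^{1/p}\le c\,p^{1/2}(EY^p)^{1/p}$ with a universal constant $c$, which is the claim.

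The step I expect to be the real obstacle is getting the \emph{sharp} power $p^{1/2}$ in place of $p$. The naive route --- It\^o's formula for $|M_t|^p$, then Doob's and H\"older's inequalities --- produces only a constant of order $p$, the loss stemming from the factor $\tfrac{p(p-1)}{2}$ in the second derivative of $x\mapsto|x|^p$. What restores the exponent $1/2$ is that the good-$\lambda$ weight $\psi(\delta)=2e^{-1/(2\delta^2)}$ decays faster than any power of $\delta$: the constraint $2^{p}\psi(\delta)<1$ then allows $\delta$ as large as order $p^{-1/2}$, hence $\delta^{-p}$ only of order $p^{p/2}$. This super-polynomial (Gaussian) decay of $\psi$ is the heart of the matter. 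An equally good alternative would be to first pass to Brownian motion via the Dambis--Dubins--Schwarz time change --- reducing everything to $\|B^*_S\|_p\le c\sqrt p\,\|S^{1/2}\|_p$ for a Brownian motion $B$ and a stopping time $S$ --- and then apply the Garsia--Rodemich--Rumsey lemma with an exponential Young function as in \cite{BY82}; but the argument sketched above is self-contained.
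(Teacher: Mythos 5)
Your proof is correct, but note that the paper itself does not prove this lemma at all: it is quoted as a known result of Davis \cite{D76} and Barlow--Yor \cite{BY82}, so there is no internal argument to compare against. Your self-contained route --- localization, the exponential supermartingale estimate $P\big(M^*_\tau\ge\lambda,\ \langle M\rangle_\tau\le\mu\big)\le 2e^{-\lambda^2/(2\mu)}$, the good-$\lambda$ inequality with Gaussian weight $\psi(\delta)=2e^{-1/(2\delta^2)}$, and the integration step with $\delta^2\asymp 1/p$ --- is a standard and valid way to obtain the Burkholder--Davis--Gundy inequality with the sharp constant $c\,p^{1/2}$; the bookkeeping in your choice $\delta^2=\big(4(p+1)\log 2\big)^{-1}$ checks out (indeed $2^{p}\psi(\delta)=2^{-(p+1)}$ and $\delta^{-p}=\big(4(p+1)\log 2\big)^{p/2}$), and the absorption of $\psi(\delta)EX^p$ is legitimate after your localization, since $M^*_{\tau_n}$ is bounded so $EX^p<\infty$. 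This is genuinely different from the original Barlow--Yor argument (Dambis--Dubins--Schwarz reduction to Brownian motion followed by Garsia--Rodemich--Rumsey with an exponential Young function), which you correctly identify as an alternative; your good-$\lambda$ route is more elementary and keeps the $p$-dependence completely explicit, at the modest cost of the conditional application of the exponential bound after the stopping time $\rho$, a technical point you flag and which is standard. What the paper buys by citing the literature is brevity; what your proof buys is a transparent explanation of why the constant grows like $p^{1/2}$ rather than $p$, namely the super-polynomial (Gaussian) decay of the good-$\lambda$ weight, which permits $\delta$ of order $p^{-1/2}$.
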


\section{Large deviation principle}

In this section, we consider the large deviation principle for the stochastic Navier-Stokes equations with anisotropic viscosity. We will use the weak convergence approach introduced by Budhiraja and Dupuis in \cite{BD00}. First we recall it.  The starting point is the equivalence between the large deviation principle and the Laplace principle. This result was first formulated in \cite{P93} and it is essentially a consequence of Varadhan's lemma \cite{V66} and Bryc's converse theorem \cite{B90}.

\begin{remark}
By \cite{DZ98} we have the the equivalence between the large deviation principle and the Laplace principle in completely regular topological spaces. In \cite{BD00} the authors give the weak convergence approach on a Polish space.  Since the  proof  does not depend on the separability and the completeness, the result also holds in metric spaces.
\end{remark}

Let $\{W(t)\}_{t\geqslant 0}$ be a cylindrical Wiener process on $l^2$ w.r.t. a complete filtered probability space $(\Omega, \mathcal{F}, \mathcal{F}_t, P)$ (i.e. the path of $W$ take values in $C([0,T]; U)$, where $U$ is another Hilbert space such that the embedding $l^2\subset U$ is Hilbert-Schmidt). For $\varepsilon>0$, suppose $g^\varepsilon$: $C([0,T], U)\rightarrow E$ is a measurable map and $u^\varepsilon:=g^\varepsilon(W(\cdot))$. Let

$$\mathcal{A}:=\left\{v: v \text{ is }l^2\text{-valued }\mathcal{F}_t\text{-predictable process and }\int^T_0\|v(s)(\omega)\|^2_{l^2}ds<\infty\text{ a.s.}\right\},$$
$$S_N:=\left\{\phi\in L^2([0,T],l^2):\text{ }\int^T_0\|\phi(s)\|^2_{l^2}ds\leqslant N\right\},$$
$$\mathcal{A}_N:=\left\{v\in\mathcal{A}:\text{ } v(\omega)\in S_N\text{ P-a.s.}\right\}.$$
Here we will always refer to the weak topology on $S_N$ in the following if we do not state it explicitly.

Now we formulate the following sufficient conditions for the Laplace principle of $u^\varepsilon$ as $\varepsilon\rightarrow 0$.

\begin{hyp}\label{Hyp}
There exists a measurable map $g^0: C([0,T], U)\rightarrow E$ such that the following two conditions hold:\\
1. Let $\{v^\varepsilon:\varepsilon>0\}\subset  \mathcal{A}_N$ for some $N<\infty$. If $v^\varepsilon$ converge to $v$ in distribution as $S_N$-valued random elements, then 
$$g^\varepsilon\left(W(\cdot)+\frac{1}{\sqrt{\varepsilon}}\int^\cdot_0v^\varepsilon(s)ds\right)\rightarrow g^0\left(\int^\cdot_0v(s)ds\right)$$
in distribution as $\varepsilon\rightarrow 0$.\\
2. For each $N<\infty$, the set
$$K_N=\left\{g^0\left(\int^\cdot_0\phi(s)ds\right): \phi\in S_N\right\}$$
is a compact subset of $E$. 
\end{hyp}

\begin{lemma}[{\cite[Theorem 4.4]{BD00}}]\label{weak convergence method}
If $u^\varepsilon=g^\varepsilon(W)$ satisfies the Hypothesis \ref{Hyp}, then the family $\{u^\varepsilon\}$ satisfies the Laplace principle (hence large deviation principle) on $E$ with the good rate function $I$ given by
\begin{equation}
I(f)=\inf_{\{\phi\in L^2([0,T], l^2):f=g^0(\int^\cdot_0\phi(s)ds)\}}\left\{\frac{1}{2}\int^T_0\|\phi(s)\|^2_{l^2}ds\right\}.
\end{equation}
\end{lemma}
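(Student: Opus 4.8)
The plan is to reproduce the proof of \cite[Theorem 4.4]{BD00}, the only change being that the Polish-space hypothesis there is weakened to a metric space $E$ as noted in the Remark above, since neither completeness nor separability of $E$ enters the argument. The starting point is the Bou\'e--Dupuis type variational representation for positive functionals of the cylindrical Wiener process: for every bounded Borel $h:E\to\mathbb{R}$ and every $\varepsilon>0$,
$$-\varepsilon\log E\Big[e^{-h(u^\varepsilon)/\varepsilon}\Big]=\inf_{v\in\mathcal{A}}E\Big[\tfrac12\int_0^T\|v(s)\|^2_{l^2}\,ds+h\Big(g^\varepsilon\big(W(\cdot)+\tfrac{1}{\sqrt\varepsilon}\int_0^\cdot v(s)\,ds\big)\Big)\Big].$$
Writing $L=\inf_{f\in E}\{h(f)+I(f)\}$, it then suffices, in view of the Laplace principle--LDP equivalence recalled above, to prove the two Laplace bounds $\limsup_{\varepsilon\to0}\big(-\varepsilon\log E[e^{-h(u^\varepsilon)/\varepsilon}]\big)\le L$ and $\liminf_{\varepsilon\to0}\big(-\varepsilon\log E[e^{-h(u^\varepsilon)/\varepsilon}]\big)\ge L$, together with the compactness of the level sets of $I$.

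For the lower bound on $-\varepsilon\log E[\cdots]$, fix $\eta>0$, pick $f_0\in E$ with $h(f_0)+I(f_0)\le L+\eta$, and pick $\phi\in L^2([0,T],l^2)$ with $f_0=g^0(\int_0^\cdot\phi\,ds)$ and $\tfrac12\int_0^T\|\phi\|^2_{l^2}\,ds\le I(f_0)+\eta$; in particular $\phi\in S_N$ for $N=2(I(f_0)+\eta)$. Plugging the deterministic control $v^\varepsilon\equiv\phi$ into the representation (the infimum only helps) and invoking the first part of Hypothesis \ref{Hyp} with $v^\varepsilon\to\phi$ trivially in $S_N$, one gets $g^\varepsilon(W(\cdot)+\tfrac1{\sqrt\varepsilon}\int_0^\cdot\phi\,ds)\to f_0$ in distribution, so by boundedness and continuity of $h$,
$$\limsup_{\varepsilon\to0}\Big(-\varepsilon\log E[e^{-h(u^\varepsilon)/\varepsilon}]\Big)\le \tfrac12\int_0^T\|\phi\|^2_{l^2}\,ds+h(f_0)\le L+2\eta,$$
and letting $\eta\downarrow0$ finishes this half.

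For the upper bound, fix $\delta>0$ and choose $\delta$-near-optimal controls $v^\varepsilon\in\mathcal{A}$ in the representation. Since $h$ is bounded, the cost $\tfrac12 E\int_0^T\|v^\varepsilon\|^2_{l^2}\,ds$ is bounded uniformly in $\varepsilon$, which after a standard truncation/localisation argument allows one to replace $v^\varepsilon$ by controls in $\mathcal{A}_N$ for a fixed $N$ up to an error vanishing with the truncation level. Along a subsequence $v^\varepsilon\to v$ in distribution as $S_N$-valued elements, using weak compactness of $S_N$, so by Hypothesis \ref{Hyp}(1), $g^\varepsilon(W(\cdot)+\tfrac1{\sqrt\varepsilon}\int_0^\cdot v^\varepsilon\,ds)\to g^0(\int_0^\cdot v\,ds)$ in distribution. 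Passing to a probability space on which both convergences hold a.s.\ (Skorokhod representation), using Fatou's lemma together with the lower semicontinuity of $\phi\mapsto\tfrac12\int_0^T\|\phi\|^2_{l^2}\,ds$ under weak convergence and the boundedness of $h$, one obtains
$$\liminf_{\varepsilon\to0}E\Big[\tfrac12\int_0^T\|v^\varepsilon\|^2\,ds+h(g^\varepsilon(\cdots))\Big]\ge E\Big[\tfrac12\int_0^T\|v\|^2\,ds+h\big(g^0(\textstyle\int_0^\cdot v\,ds)\big)\Big]\ge L,$$
since for each realization $\tfrac12\int_0^T\|v\|^2\,ds\ge I(g^0(\int_0^\cdot v\,ds))$. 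Combined with near-optimality and $\delta\downarrow0$ this gives the upper bound. Finally, for $r<\infty$ and $f$ with $I(f)\le r$ one finds $\phi\in S_{2r+2}$ with $f=g^0(\int_0^\cdot\phi\,ds)$, so $\{f:I(f)\le r\}\subset K_{2r+2}$, compact by Hypothesis \ref{Hyp}(2); a standard lower-semicontinuity argument (cf.\ \cite{BD00}) shows the level set is closed, hence $I$ is a good rate function, and the infimum defining $I$ is attained whenever finite.

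The main obstacle is the Laplace upper bound: precisely, the localisation step showing that near-optimal controls may be taken in $\mathcal{A}_N$ for a fixed $N$, followed by the careful passage to the limit — converting convergence in distribution of the pair $(v^\varepsilon, g^\varepsilon(W(\cdot)+\tfrac1{\sqrt\varepsilon}\int_0^\cdot v^\varepsilon\,ds))$ into a.s.\ convergence via Skorokhod and combining Fatou's lemma with the weak lower semicontinuity of the quadratic cost while controlling the bounded but possibly oscillating term $h(g^\varepsilon(\cdots))$. The lower bound is by comparison soft, since one is free to use a single deterministic control.
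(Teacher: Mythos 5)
Your outline is a correct reconstruction of the Budhiraja--Dupuis argument: the variational representation for $-\varepsilon\log E[e^{-h(u^\varepsilon)/\varepsilon}]$, the Laplace lower bound via a single deterministic near-optimal control fed into part 1 of Hypothesis \ref{Hyp}, the Laplace upper bound via truncation of near-optimal controls into $\mathcal{A}_N$, weak compactness of $S_N$ and lower semicontinuity of the quadratic cost, and compactness of the level sets of $I$ from part 2 of Hypothesis \ref{Hyp}. The paper does not reprove this lemma at all --- it is quoted verbatim as \cite[Theorem 4.4]{BD00}, with only the remark that the proof carries over from Polish to general metric spaces --- so your proposal coincides with the proof the paper implicitly relies on.
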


Consider the following equation:
\begin{equation}\label{LDP eq.}\aligned
&du^\varepsilon(t)=\partial^2_1 u^\varepsilon(t)dt-B(u^\varepsilon(t))dt+\sqrt{\varepsilon}\sigma(t, u^\varepsilon(t))dW(t),\\
&u^\varepsilon(0)=u_0.
\endaligned
\end{equation}

By Lemma \ref{ex. and uniq. of solution}, under the assumptions (A0)-(A3), (\ref{LDP eq.}) has a unique strong solution $u^\varepsilon\in L^\infty([0,T], \tilde{H}^{0,1})\bigcap L^2([0,T], \tilde{H}^{1,1})\bigcap C([0,T], H^{-1})$ for $u_0\in \tilde{H}^{0,1}$. It follows from Yamada-Watanabe theorem (See \cite[Appendix E]{LR15}) that there exists a Borel-measurable function
 $$g^\varepsilon: C([0,T],U)\rightarrow L^\infty([0,T], H)\bigcap L^2([0,T], \tilde{H}^{1,0})\bigcap C([0,T],H^{-1})$$
such that $u^\varepsilon=g^\varepsilon(W)$ a.s..

Let us introduce the following skeleton equation associated to (\ref{LDP eq.}), for $\phi\in L^2([0,T], l^2)$:
\begin{equation}\label{skeleton eq.}\aligned
&dz^\phi(t)=\partial^2_1 z^\phi(t)dt-B(z^\phi(t))dt+\sigma(t, z^\phi(t))\phi(t)dt,\\
&\text{div }z^\phi=0,\\
&z^\phi(0)=u_0.
\endaligned
\end{equation}
An element $z^\phi\in L^\infty([0,T], H)\bigcap L^2([0,T], \tilde{H}^{1,0})\bigcap C([0,T],H^{-1})$ is called a (weak) solution to (\ref{skeleton eq.}) if for any $\varphi\in (C^\infty_0([0,T]\times \mathbb{T}^2))^2$ with $\text{div} \varphi=0$, and $t>0$,
$$\langle z^\phi(t),\varphi(t)\rangle=\langle u_0,\varphi(0)\rangle+\int^t_0\langle z^\phi, \partial_t\varphi\rangle-\langle \partial_1z^\phi,\partial_1 \varphi\rangle+\langle- B(z^\phi)+\sigma(s, z^\phi)\phi, \varphi\rangle ds.$$

The existence of the weak solution to (\ref{skeleton eq.}) can be obtained by the same method as in \cite{LZZ18} (see Lemma \ref{solution to skeleton eq.} in the following).

Define $g^0:C([0,T],U)\rightarrow L^\infty([0,T], H)\bigcap L^2([0,T], \tilde{H}^{1,0})\bigcap C([0,T],H^{-1})$ by
\begin{align*}
g^0(h):=
\left\{
             \begin{array}{ll}
             z^\phi, &\text{ if }h=\int^\cdot_0\phi(s)ds \text{ for some }\phi\in L^2([0,T],l^2);  \\
             0, &\text{  otherwise.} 
             \end{array}
\right.
\end{align*}
Then the rate function can be written as
\begin{equation}\label{rate function LDP}
I(z)=\inf\left\{\frac{1}{2}\int^T_0\|\phi(s)\|^2_{l^2}ds: \text{ }z=z^\phi,\text{ }\phi\in L^2([0,T],l^2)\right\},
\end{equation}
where $z\in L^\infty([0,T], H)\bigcap L^2([0,T], \tilde{H}^{1,0})\bigcap C([0,T],H^{-1})$.

The main result of this section is the following one:
\begin{Thm}\label{main result LDP}
Assume (A0)-(A3) hold with $L_2=0$ and  $u_0\in \tilde{H}^{0,1}$, then $u^\varepsilon$ satisfies a large deviation principle on $L^\infty([0,T], H)\bigcap L^2([0,T], \tilde{H}^{1,0})\bigcap C([0,T],H^{-1})$ with the good rate function $I$ given by (\ref{rate function LDP}).
\end{Thm}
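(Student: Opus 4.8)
The plan is to verify the two conditions of Hypothesis \ref{Hyp} for the map $g^\varepsilon$ associated to \eqref{LDP eq.} and the map $g^0$ defined via the skeleton equation \eqref{skeleton eq.}, and then invoke Lemma \ref{weak convergence method}. Before doing this, one needs a preliminary existence/uniqueness result for the skeleton equation \eqref{skeleton eq.} and, more importantly, for the controlled (stochastic) equation obtained by applying a Girsanov-type shift, namely
\begin{equation*}
du^\varepsilon_v(t)=\partial^2_1 u^\varepsilon_v(t)\,dt-B(u^\varepsilon_v(t))\,dt+\sigma(t,u^\varepsilon_v(t))v(t)\,dt+\sqrt{\varepsilon}\,\sigma(t,u^\varepsilon_v(t))\,dW(t),
\end{equation*}
for $v\in\mathcal A_N$; by the variational representation and the Yamada--Watanabe argument one has $u^\varepsilon_v=g^\varepsilon\bigl(W(\cdot)+\tfrac1{\sqrt\varepsilon}\int_0^\cdot v(s)\,ds\bigr)$. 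The well-posedness for all these equations follows from the same scheme as in \cite{LZZ18} (used in Lemma \ref{ex. and uniq. of solution}): one needs the $H$-energy estimate together with the extra $\tilde H^{0,1}$-estimate to recover compactness, exactly as flagged in the introduction. I would state these as a preparatory lemma (the analogue of ``Lemma \ref{solution to skeleton eq.}'').

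For the compactness of $K_N$ (condition 2), I would take a sequence $\phi_n\in S_N$ converging weakly in $L^2([0,T],l^2)$ to some $\phi\in S_N$, and show $z^{\phi_n}\to z^\phi$ in $L^\infty([0,T],H)\cap L^2([0,T],\tilde H^{1,0})\cap C([0,T],H^{-1})$. The uniform bounds come from testing \eqref{skeleton eq.} against $z^{\phi_n}$ and against $-\partial_2^2 z^{\phi_n}$ (i.e. the $H$ and $\tilde H^{0,1}$ estimates), using (A1)--(A2), the Gronwall inequality, and the constraint $\int_0^T\|\phi_n\|_{l^2}^2\,ds\le N$; the antisymmetry $b(u,v,v)=0$ kills the nonlinear term in the $H$-estimate and the appendix estimates on $b$ control it in the $\tilde H^{0,1}$-estimate. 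These bounds give weak/weak-$*$ compactness, and an Aubin--Lions argument (using the $H^{-1}$-bound on $\partial_t z^{\phi_n}$ coming from the equation) upgrades to strong convergence in $L^2([0,T],H)$; passing to the limit in the weak formulation, the linearity of $\sigma$ in $\phi$ and the weak convergence $\phi_n\rightharpoonup\phi$ handle the control term, while strong $L^2([0,T],H)$-convergence handles $B$ and $\sigma(\cdot,z^{\phi_n})$. Uniqueness of \eqref{skeleton eq.} identifies the limit as $z^\phi$, which also gives continuity of $\phi\mapsto z^\phi$ and hence compactness of $K_N$.

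For condition 1, set $X^\varepsilon:=u^\varepsilon_{v^\varepsilon}=g^\varepsilon\bigl(W(\cdot)+\tfrac1{\sqrt\varepsilon}\int_0^\cdot v^\varepsilon\bigr)$ and $Y:=z^v=g^0\bigl(\int_0^\cdot v\bigr)$, and prove $X^\varepsilon\to Y$ in distribution in $E:=L^\infty([0,T],H)\cap L^2([0,T],\tilde H^{1,0})\cap C([0,T],H^{-1})$. Following the argument attributed to \cite{WZZ}, I would first establish $\mathbb E\bigl[\sup_{[0,T]}\|X^\varepsilon\|_H^2+\int_0^T\|\partial_1 X^\varepsilon\|_H^2\,ds\bigr]\le C_N$ and the corresponding $\tilde H^{0,1}$-moment bound, uniformly in $\varepsilon$ and $v^\varepsilon\in\mathcal A_N$, using Lemma \ref{martingale lemma} to control the stochastic integral and the Burkholder term, plus (A1)--(A2). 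Writing the equation for the difference $X^\varepsilon-z^{v^\varepsilon}$ (where $z^{v^\varepsilon}$ is the skeleton solution with the \emph{random} control $v^\varepsilon$), one shows $X^\varepsilon-z^{v^\varepsilon}\to0$ in $L^2([0,T],H)$ in probability: the stochastic term is $O(\sqrt\varepsilon)$ in the right norm by Lemma \ref{martingale lemma}, the drift differences are controlled by Gronwall using (A3) with $L_2=0$ (this is where the hypothesis $L_2=0$ is used, since no $\partial_1$-difference term is available at the $L^2$-level) and the appendix bounds on $b$, together with the uniform energy bounds. Then, exactly as in the compactness argument, $z^{v^\varepsilon}\to z^v$ (using $v^\varepsilon\to v$ in distribution on $S_N$ and a Skorokhod representation), and finally the convergence is boot-strapped from $L^2([0,T],H)$ to the full norm $L^\infty([0,T],H)\cap L^2([0,T],\tilde H^{1,0})$ by applying It\^o's formula to $\|X^\varepsilon-z^v\|_H^2$ and using the $L^2([0,T],H)$-convergence already obtained to handle the nonlinearity. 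The main obstacle is precisely this last bootstrap together with the lack of dissipation in the vertical direction: since the $L^2$-energy estimate does not by itself yield compactness/strong convergence (the issue highlighted in \cite{LZZ18}), one must carry the extra $\tilde H^{0,1}$-estimates through every step — for the a priori bounds, for the difference estimate, and for the identification of the limit — and check that the structural constants $K_2,\tilde K_2$ (and $L_2=0$) keep all the Gronwall arguments closed.
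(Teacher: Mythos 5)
Your overall strategy is the paper's: verify the two conditions of Hypothesis \ref{Hyp} and invoke Lemma \ref{weak convergence method}, with well-posedness of the skeleton equation and of the controlled equation (\ref{eq. for weak convergence}) via Girsanov/Yamada--Watanabe as preparatory steps (the paper's Lemmas \ref{solution to skeleton eq.} and \ref{Girsanov thm to prove existence}), and with the compactness of $K_N$ proved exactly as in Lemma \ref{good rate function}. Where you genuinely diverge is condition 1: the paper (Lemmas \ref{estimate eq. for weak convergence}--\ref{weak convergence}) proves tightness of $Z^\varepsilon_v$ in the space $\chi$, introduces the auxiliary linear process $Y^\varepsilon$ solving $dY^\varepsilon=\partial_1^2Y^\varepsilon dt+\sqrt{\varepsilon}\sigma(t,Z^\varepsilon_v)dW$, and applies Skorokhod so that $\tilde Z^\varepsilon_v-\tilde Y^\varepsilon$ satisfies a \emph{pathwise} PDE to which the deterministic argument of Lemma \ref{good rate function} applies; you instead compare $X^\varepsilon$ directly with the skeleton solution $z^{v^\varepsilon}$ driven by the random control, show the difference vanishes in probability via It\^o/Gronwall on the original space, and then use continuity of $\phi\mapsto z^\phi$ on $S_N$ (which condition 2 already provides) together with $v^\varepsilon\Rightarrow v$ to conclude. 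Your route buys economy (no tightness lemma, no $Y^\varepsilon$, no weighted $\tilde H^{0,1}$ moment bound for $Z^\varepsilon_v$, since the needed $\int_0^T\|z^{v^\varepsilon}\|^2_{\tilde H^{1,1}}ds\leqslant C(N,u_0)$ comes from the deterministic bound (\ref{a priori z2}), which holds a.s.\ because $v^\varepsilon\in\mathcal{A}_N$); the paper's route trades this machinery for the convenience that, after Skorokhod, no stochastic integrals need to be transferred to the new probability space.

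Two points in your write-up need repair. First, in the compactness argument, Aubin--Lions plus identification of the limit gives only strong convergence in $L^2([0,T],H)$ (and $C([0,T],H^{-1-\delta})$); convergence in $L^\infty([0,T],H)\bigcap L^2([0,T],\tilde H^{1,0})\bigcap C([0,T],H^{-1})$ does \emph{not} follow automatically, and you must add the energy estimate for $w^n=z^{\phi_n}-z^\phi$ (with Lemma \ref{anisotropic estimate for b}, (A1), (A3) with $L_2=0$, and a splitting over the set where $\|w^n\|_H$ is large, as in the paper) — so "hence compactness of $K_N$" as written skips the decisive step. Second, your final bootstrap "apply It\^o's formula to $\|X^\varepsilon-z^v\|^2_H$" is ill-posed as stated, since $z^v$ is only a distributional limit and need not live on the same stochastic basis as $X^\varepsilon$; the estimate should be carried out against $z^{v^\varepsilon}$ on the original space (where, applying It\^o to $e^{-q(t)}\|X^\varepsilon-z^{v^\varepsilon}\|^2_H$ with $q(t)=C\int_0^t(1+\|z^{v^\varepsilon}\|^2_{\tilde H^{1,1}}+\|v^\varepsilon\|^2_{l^2})ds$ and using Lemma \ref{martingale lemma}, you in fact obtain convergence in the full norm in one stroke, so the separate bootstrap is unnecessary), or else after a Skorokhod representation in which, as in the paper, the equation has been reduced to a pathwise one.
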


The proof is divided into the following lemmas.

 \vskip.10in

\begin{lemma}\label{solution to skeleton eq.}
Assume (A0)-(A3) hold with $L_2=0$. For all $u_0\in\tilde{H}^{0,1}$ and $\phi\in L^2([0,T], l^2)$ there exists a unique solution $$z^\phi\in L^\infty([0,T], \tilde{H}^{0,1})\bigcap L^2([0,T], \tilde{H}^{1,1})\bigcap C([0,T],H^{-1})$$ to (\ref{skeleton eq.}).
\end{lemma}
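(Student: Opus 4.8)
The plan is to establish existence via a Galerkin approximation combined with the a~priori estimates available for \eqref{equation after projection}, since the extra drift term $\sigma(t,z^\phi)\phi$ is a lower-order perturbation that can be absorbed using $\phi\in L^2([0,T],l^2)$, and then prove uniqueness by a direct energy estimate on the difference of two solutions. First I would fix an orthonormal basis of $\tilde H^{0,1}$ consisting of eigenfunctions compatible with the operators involved, let $P_n$ be the projection onto the first $n$ modes, and consider the finite-dimensional system $\dot z_n=P_n\partial_1^2 z_n-P_nB(z_n)+P_n\sigma(t,z_n)\phi$ with $z_n(0)=P_nu_0$; local existence is classical, and global existence follows once the a~priori bounds below are in place.

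The core of the argument is the a~priori estimates, carried out exactly as in \cite{LZZ18} but tracking the new term. Testing with $z_n$ gives, after using $b(z_n,z_n,z_n)=0$,
\begin{equation*}
\frac12\frac{d}{dt}\|z_n\|_H^2+\|\partial_1 z_n\|_H^2=\langle\sigma(t,z_n)\phi,z_n\rangle\le \|\sigma(t,z_n)\|_{L_2(l^2,H)}\|\phi\|_{l^2}\|z_n\|_H,
\end{equation*}
and by (A1) together with Young's inequality (choosing the constant so the $\|\partial_1z_n\|_H^2$ coming from $K_2$ is absorbed, which is possible since $K_2<2/11$) plus Gronwall with the integrable weight $\|\phi(t)\|_{l^2}^2$ one gets a uniform $L^\infty([0,T],H)\cap L^2([0,T],\tilde H^{1,0})$ bound. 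Next I would differentiate in $x_2$ and test with $\partial_2 z_n$ to obtain the $\tilde H^{0,1}$-level estimate; here the nonlinear term is controlled by the anisotropic product estimates collected in the Appendix exactly as in \cite{LZZ18}, the viscous term produces $\|\partial_1\partial_2 z_n\|_H^2$, and the new term contributes $\langle\partial_2(\sigma(t,z_n)\phi),\partial_2 z_n\rangle$, bounded via (A2) by $\tilde K$-terms times $\|\phi\|_{l^2}$, with the $\tilde K_2(\|\partial_1z_n\|_H^2+\|\partial_1\partial_2z_n\|_H^2)$ contribution absorbed using $\tilde K_2<2/5$; Gronwall again (weight $1+\|\phi(t)\|_{l^2}^2$) yields the uniform $L^\infty([0,T],\tilde H^{0,1})\cap L^2([0,T],\tilde H^{1,1})$ bound. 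From these bounds and the equation one reads off a uniform bound on $\partial_t z_n$ in $L^2([0,T],H^{-1})$ (or a suitable negative space), so by Banach--Alaoglu and Aubin--Lions one extracts a subsequence converging weakly-$*$ in the energy spaces and strongly in $L^2([0,T],H)$; the strong $L^2([0,T],L^2)$ convergence is precisely what is needed to pass to the limit in the quadratic term $B(z_n)$, and the linear term in $\phi$ passes to the limit by the Lipschitz condition (A3) together with the strong convergence. This identifies the limit as a weak solution in $C([0,T],H^{-1})$ (continuity in $H^{-1}$ follows from $z^\phi\in L^2(\tilde H^{1,1})$ and $\partial_t z^\phi\in L^2(H^{-1})$ by a standard interpolation/Lions--Magenes lemma).

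For uniqueness, let $z_1,z_2$ be two solutions and set $w=z_1-z_2$; then $w$ solves $\partial_t w=\partial_1^2 w-(B(z_1)-B(z_2))+(\sigma(t,z_1)-\sigma(t,z_2))\phi$ with $w(0)=0$. Testing with $w$, the bilinear term gives $-b(w,z_2,w)$ (using $b(z_1,w,w)=0$), which by the anisotropic estimates in the Appendix is bounded by $\tfrac14\|\partial_1w\|_H^2+C\|z_2\|_{\tilde H^{1,1}}^{?}\|w\|_H^2$ with an $L^1_t$ coefficient thanks to $z_2\in L^2(\tilde H^{1,1})$, while the diffusion term is handled by (A3) with $L_2=0$, giving $L_1\|w\|_H^2$ times $\|\phi\|_{l^2}$; Gronwall with the integrable weight forces $w\equiv0$. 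I expect the main obstacle to be the same one flagged in the introduction: the $L^2$-estimate alone does not close because of the missing dissipation in the $x_2$ direction, so one genuinely needs the $\tilde H^{0,1}$-estimate both to get compactness for the limit passage and to have enough regularity on $z_2$ to control $b(w,z_2,w)$ in the uniqueness argument; carefully verifying that the constants $K_2,\tilde K_2$ in (A1)--(A2) are small enough to absorb all viscous terms after adding the $\phi$-contributions is the delicate bookkeeping.
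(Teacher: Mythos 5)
Your proposal is correct in substance, but it reaches existence by a different approximation than the paper. The paper does not use a Galerkin scheme: it regularizes the equation itself, adding a vanishing vertical viscosity $\epsilon^2\partial_2^2 z^\phi_\epsilon$ and mollifying the initial datum $u_0*j_\epsilon$, so that each approximation is a globally well-posed (fully viscous) Navier--Stokes-type system with smooth solutions; it then runs exactly the two-level energy estimates you describe (the $H$-estimate with (A1), the $\tilde H^{0,1}$-estimate with Lemma \ref{estimate for b with partial_2} and (A2)), obtains a uniform bound on $\partial_t z^\phi_\epsilon$ in $L^p([0,T],H^{-1})$ for some $p\in(1,\tfrac43)$ (your $L^2$ bound also holds and is in fact slightly stronger, since $\|z_n\|_H$ is bounded in time and (A0) then puts $\sigma(\cdot,z_n)\phi$ in $L^2_t(H^{-1})$), and passes to the limit via Aubin--Lions, Temam's lemma for $B$, and (A3) with $L_2=0$ for the control term, exactly as you propose; the uniqueness argument via Lemma \ref{anisotropic estimate for b} and Gronwall is the same in both. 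What each route buys: your Galerkin approach avoids invoking well-posedness theory for the regularized system, but you must verify that the spectral projections commute with $\partial_1,\partial_2$, are self-adjoint, and preserve the divergence-free structure so that the anisotropic estimates (in particular Lemma \ref{estimate for b with partial_2}, which uses $\operatorname{div}u=0$ and integration by parts) survive projection --- automatic for the Fourier basis of divergence-free fields on $\mathbb{T}^2$, but worth stating; the paper's vanishing-viscosity route sidesteps all projection-compatibility issues because the approximations solve an honest PDE, and it directly yields the $C([0,T],H^{-1-\delta})$ convergence by citing \cite{FG95}. Your final continuity claim via the Lions--Magenes lemma (using $z^\phi\in L^2(\tilde H^{1,1})\subset L^2(H^1)$ and $\partial_t z^\phi\in L^2(H^{-1})$) actually gives $C([0,T],H)$, which is more than the $C([0,T],H^{-1})$ the paper extracts from $\partial_t z^\phi\in L^p(H^{-1})$; either suffices for the statement.
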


\begin{proof}
First we give some a priori estimates for $z^\phi$. By taking $H$ inner product of (\ref{skeleton eq.}) with $z^\phi$ and using $\text{div }z^\phi=0$, we have
\begin{align*}
&\|z^\phi(t)\|^2_H+2\int^t_0\|\partial_1z^\phi(s)\|^2_H ds\\
=&\|u_0\|^2_H+2\int^t_0\langle z^\phi(s), \sigma(s, z^\phi(s))\phi(s)\rangle ds\\
\leqslant& \|u_0\|^2_H+2\int^t_0\|z^\phi(s)\|_H\|\sigma(s,z^\phi(s))\|_{L_2(l^2, H)}\|\phi(s)\|_{l^2}ds\\
\leqslant& \|u_0\|^2_H+2\int^t_0\left(\|z^\phi(s)\|^2_H\|\phi(s)\|^2_{l^2}+K_0+K_1\|z^\phi(s)\|^2_H+K_2\|\partial_1z^\phi(s)\|^2_H\right)ds,
\end{align*}
where we used (A1) in the last inequality.

Hence by Gronwall's inequality, we have
\begin{equation}\label{a priori z1}
\|z^\phi(t)\|^2_H+\int^t_0\|\partial_1 z^\phi(s)\|^2_{H}ds\leqslant (\|u_0\|^2_H+C)e^{C\int^t_0(\|\phi(s)\|^2_{l^2}+1)ds}.
\end{equation}

Similarly, we have
\begin{align*}
&\|z^\phi(t)\|^2_{\tilde{H}^{0,1}}+2\int^t_0(\|\partial_1z^\phi(s)\|^2_H+\|\partial_1\partial_2z^\phi(s)\|^2_H)ds\\
=&\|u_0\|^2_{\tilde{H}^{0,1}}-2\int^t_0\langle \partial_2z^\phi(s), \partial_2(z^\phi\cdot \nabla z^\phi)(s)\rangle ds+2\int^t_0\langle z^\phi(s), \sigma(s, z^\phi(s))\phi(s)\rangle_{\tilde{H}^{0,1}}ds\\
\leqslant&\|u_0\|^2_{\tilde{H}^{0,1}}+\int^t_0(\frac{1}{5}\|\partial_1\partial_2z^\phi(s)\|^2_H+C(1+\|\partial_1z^\phi(s)\|^2_H)\|\partial_2z^\phi(s)\|^2_H)ds\\
&+2\int^t_0(\|z^\phi(s)\|^2_{\tilde{H}^{0,1}}\|\phi(s)\|_{l^2}^2+\|\sigma(s, z^\phi(s))\|_{L_2(l^2, \tilde{H}^{0,1})}^2)ds,
\end{align*}
where we used Lemma \ref{estimate for b with partial_2} in the last inequality.

 Hence by (A2) we deduce that
\begin{align*}
&\|z^\phi(t)\|^2_{\tilde{H}^{0,1}}+\int^t_0\|z^\phi(s)\|^2_{\tilde{H}^{1,1}}ds\\
\leqslant& \|u_0\|^2_{\tilde{H}^{0,1}}+C+C\int^t_0(1+\|\partial_1z^\phi(s)\|^2_H+\|\phi(s)\|^2_{l^2})\|z^\phi(s)\|^2_{\tilde{H}^{0,1}}ds.
\end{align*}

Then by Gronwall's inequality and (\ref{a priori z1}) we have
\begin{equation}\label{a priori z2}
\|z^\phi(t)\|^2_{\tilde{H}^{0,1}}+\int^t_0\|z^\phi(s)\|^2_{\tilde{H}^{1,1}}ds\leqslant (\|u_0\|^2_{\tilde{H}^{0,1}}+C)e^{C(t,\phi,u_0)},
\end{equation}
where 
$$C(t,\phi,u_0)=C\left(\int^t_0(1+\|\phi(s)\|^2_{l^2})ds+(\|u_0\|^2_H+1)e^{C\int^t_0(1+\|\phi(s)\|^2_{l^2})ds}\right).$$

Now consider the following approximate equation:
\begin{equation}\label{approxi. eq.}\aligned
\left\{
             \begin{array}{lll}
             &dz_\epsilon^\phi(t)=\partial^2_1 z_\epsilon^\phi(t)dt+\epsilon^2\partial_2^2z_\epsilon^\phi(t)dt-B(z_\epsilon^\phi(t))dt+\sigma(t, z_\epsilon^\phi(t))\phi(t)dt,\\
             &\text{div} z_\epsilon^\phi=0,\\
             &z_\epsilon^\phi(0)=u_0*j_\epsilon,
             \end{array}
\right.
\endaligned
\end{equation}
where $j$ is a smooth function on $\mathbb{R}^2$ with 
$$j(x)=1, \text{ } |x|\leqslant 1;\text{ }j(x)=0,\text{ }|x|\geqslant 2,$$
and 
$$j_\epsilon(x)=\frac{1}{\epsilon^2}j(\frac{x}{\epsilon}).$$

It follows from classical theory on Navier-Stokes system that (\ref{approxi. eq.}) has a unique global smooth solution $z^\phi_\epsilon$ for any fixed $\epsilon$.  Furthermore, along the same line to (\ref{a priori z1}) and (\ref{a priori z2}) we have
\begin{equation}\label{a priori z}\aligned
&\|z_\epsilon^\phi(t)\|^2_H+\int^t_0\|\partial_1 z_\epsilon^\phi(s)\|^2_{H}ds+\epsilon^2\int^t_0\|\partial_2z_\epsilon^\phi(s)\|^2_Hds\leqslant (\|u_0\|^2_H+C)e^{C\int^t_0(\|\phi(s)\|^2_{l^2}+1)ds},\\
&\|\partial_2 z_\epsilon^\phi(t)\|^2_{H}+\int^t_0\|\partial_1\partial_2 z_\varepsilon^\phi(s)\|^2_{H}ds+\epsilon^2\int^t_0\|\partial_2^2 z_\epsilon^\phi(s)\|^2_Hds\leqslant (\|u_0\|^2_{\tilde{H}^{0,1}}+C)e^{C(t,\phi,u_0)},
\endaligned
\end{equation}

The following follows a  similar argument as in the proof of \cite[Theorem 3.1]{LZZ18}. By (\ref{a priori z}), we have $\{z_\epsilon^\phi\}_{\epsilon>0}$ is uniformly bounded in  $L^\infty([0,T], \tilde{H}^{0,1})\bigcap L^2([0,T], \tilde{H}^{1,1})$, hence bounded in $L^4([0,T],H^{\frac{1}{2}})$ (by interpolation) and $L^4([0,T], L^4(\mathbb{T}^2))$ (by Sobolev embedding). Thus $B(z^\phi_\epsilon)$ is uniformly bounded in $L^2([0,T],H^{-1})$. Let $p\in(1,\frac{4}{3})$, we have
\begin{align*}
\int^T_0\|\sigma(s,z^\phi_\epsilon(s))\phi(s)\|^p_{H^{-1}}ds\leqslant &\int^T_0\|\sigma(s,z^\phi_\epsilon(s))\|^p_{L_2(l^2,H^{-1})}\|\phi(s)\|^p_{l^2}ds\\
\leqslant &C\int^T_0(1+\|\sigma(s,z^\phi_\epsilon(s))\|^4_{L_2(l^2,H^{-1})}+\|\phi(s)\|^2_{l^2})ds\\
\leqslant&C\int^T_0(1+\|z^\phi_\epsilon(s))\|^4_{H}+\|\phi(s)\|^2_{l^2})ds<\infty,
\end{align*}
where we used Young's inequality in the second line and (A0) in the third line.
It comes out that 
\begin{equation}\label{bded in Lp(H-1)}
\{\partial_tz_\epsilon^\phi\}_{\epsilon>0}\text{  is uniformly bounded in } L^p([0,T],H^{-1}).\end{equation}
 Thus by  Aubin-Lions lemma (see \cite[Lemma 3.6]{LZZ18}),  there exists a $z^\phi\in L^2([0,T], {H})$ such that 
$$z_\epsilon^\phi\rightarrow z^\phi \text{ strongly in }L^2([0,T],H)\text{ as }\epsilon\rightarrow 0 \text{ (in the sense of subsequence)}.$$
Since $\{z_\epsilon^\phi\}_{\epsilon>0}$ is uniformly bounded in  $L^\infty([0,T], \tilde{H}^{0,1})\bigcap L^2([0,T], \tilde{H}^{1,1})$, there exists a $\tilde{z}\in L^\infty([0,T], \tilde{H}^{0,1})\bigcap L^2([0,T], \tilde{H}^{1,1})$ such that
$$z_\epsilon^\phi\rightarrow \tilde{z} \text{ weakly in }L^2([0,T],\tilde{H}^{1,1})\text{ as }\epsilon\rightarrow 0 \text{ (in the sense of subsequence)}.$$
$$z_\epsilon^\phi\rightarrow \tilde{z} \text{ weakly star in }L^\infty([0,T],\tilde{H}^{0,1})\text{ as }\epsilon\rightarrow 0 \text{ (in the sense of subsequence)}.$$
By the uniqueness of weak convergence limit, we deduce that $z^\phi=\tilde{z}$.  By (\ref{bded in Lp(H-1)}) and \cite[Theorem 2.2]{FG95}, we also have  for any $\delta>0$
$$z_\epsilon^\phi\rightarrow z^\phi \text{ strongly in }C([0,T],H^{-1-\delta})\text{ as }\epsilon\rightarrow 0 \text{ (in the sense of subsequence)}.$$

Now we use the above convergence  to prove that $z^\phi$ is a solution to (\ref{skeleton eq.}). Note that for any $\varphi\in C^\infty([0,T]\times \mathbb{T}^2)$ with $\text{div} \varphi=0$, for any $t\in[0,T]$, $z_\epsilon^{\phi}$  satisfies
\begin{equation}\label{eq of approximate solution}
\langle z^\phi_\epsilon(t), \varphi(t)\rangle =\langle u_0,\varphi(0)\rangle+\int^{t}_0\langle z^\phi_\epsilon, \partial_t\varphi\rangle-\langle \partial_1z^\phi_\epsilon,\partial_1 \varphi\rangle-\epsilon^2\langle \partial_2z^\phi_\epsilon,\partial_2 \varphi\rangle+\langle- B(z^\phi_\epsilon)+\sigma(s, z^\phi_\epsilon)\phi, \varphi\rangle ds.
\end{equation}

By \cite[Chapter 3, Lemma 3.2]{Te79} we  have 
$$\int^{t}_0\langle- B(z^{\phi}_\epsilon), \varphi\rangle ds\rightarrow \int^{t}_0\langle- B(z^\phi), \varphi\rangle ds\text{ as }\epsilon\rightarrow 0.$$

For the last term in the right hand side of (\ref{eq of approximate solution}), we have
\begin{align*}
&\int^{t}_0\langle \sigma(s, z^{\phi}_\epsilon)\phi-\sigma(s,z^\phi)\phi, \varphi\rangle ds\\
\leqslant& \int^{t}_0 \|(\sigma(s, z^{\phi}_\epsilon)-\sigma(s,z^\phi))\phi\|_{H}\|\varphi\|_Hds\\
\leqslant & C\int^{t}_0 \|\sigma(s, z^{\phi}_\epsilon)-\sigma(s,z^\phi)\|_{L_2(l^2,H)}\|\phi\|_{l^2}ds\\
\leqslant & C\left(\int^{t}_0 \|z^{\phi}_\epsilon-z^\phi\|^2_{H}ds\right)^\frac{1}{2}\left(\int^t_0\|\phi(s)\|^2_{l^2}ds\right)^\frac{1}{2},
\end{align*}
where we used H\"older's inequality and (A3) with $L_2=0$ in the last inequality.  

Thus let $\epsilon\rightarrow 0$ in (\ref{eq of approximate solution}),  we have $z^\phi\in L^\infty([0,T], \tilde{H}^{0,1})\bigcap L^2([0,T], \tilde{H}^{1,1})$ and
$$\partial_t z^\phi=\partial_1^2z^\phi-B(z^\phi)+\sigma(t,z^\phi(t))\phi.$$
Since the right hand side belongs to $L^p([0,T],H^{-1})$, we deduce that $$z^\phi\in L^\infty([0,T], \tilde{H}^{0,1})\bigcap L^2([0,T], \tilde{H}^{1,1})\bigcap C([0,T],H^{-1}).$$

For uniqueness, let $z^\phi_1, z^\phi_2\in L^\infty([0,T], \tilde{H}^{0,1})\bigcap L^2([0,T], \tilde{H}^{1,1})\bigcap C([0,T],H^{-1})$ be two solutions to (\ref{skeleton eq.})  and $w^\phi=z^\phi_1-z^\phi_2$.   Then we have
\begin{align*}
&\|w^\phi(t)\|^2_H+2\int^t_0\|\partial_1 w^\phi(s)\|^2_Hds\\
=&\|w^\phi(0)\|^2_H-2\int^t_0\langle w^\phi(s), B(z^\phi_1)(s)-B(z^\phi_2)(s)\rangle ds\\
&+2\int^t_0\langle w^\phi(s), \sigma(s, z^\phi_1(s))\phi(s)-\sigma(s, z^\phi_2(s))\phi(s)\rangle ds\\
\leqslant& \|w^\phi(0)\|^2_H-2\int^t_0b(w^\phi(s), z^\phi_2(s), w^\phi(s))ds\\
&+2\int^t_0\|w^\phi(s)\|_H\| \sigma(s, z^\phi_1(s))-\sigma(s, z^\phi_2(s))\|_{L_2(l^2,H)}\|\phi(s)\|_{l^2} ds\\
\leqslant& \|w^\phi(0)\|^2_H+\int^t_0\frac{1}{5}\|\partial_1 w^\phi(s)\|^2_Hds+C\int^t_0(1+\|z^\phi_2(s)\|^2_{\tilde{H}^{1,1}})\|w^\phi(s)\|^2_Hds\\
&+\int^t_0(\|w^\phi(s)\|^2_H \|\phi(s)\|^2_{l^2}+L_1\|w^\phi(s)\|^2_H)ds,
\end{align*}
where we used Lemma \ref{anisotropic estimate for b} in the sixth line and (A3) with $L_2=0$ in the last line.

Then by Gronwall's inequality we have 
$$\|w^\phi(t)\|^2_H\leqslant \|w^\phi(0)\|^2_He^{C\int^t_0(1+\|z^\phi_2(s)\|^2_{\tilde{H}^{1,1}}+\|\phi(s)\|^2_{l^2})ds},$$
which along with the fact that $z_2^\phi\in L^2([0,T], \tilde{H}^{1,1})$ and $\phi\in L^2([0,T],l^2)$ implies that $w^\phi(t)=0$. That is: $z_1^\phi=z_2^\phi$.
\end{proof}

The following Lemma shows that $I$ is a good rate function. The proof follows essentially the same argument as in \cite[Proposition 4.5]{WZZ}.
\begin{lemma}\label{good rate function}
Assume (A0)-(A3) hold with $L_2=0$. For all $N<\infty$, the set 
$$K_N=\left\{g^0\left(\int^\cdot_0 \phi(s)ds\right): \phi\in S_N\right\}$$
is a compact subset in $L^\infty([0,T], H)\bigcap L^2([0,T], \tilde{H}^{1,0})\bigcap C([0,T],H^{-1})$.
\end{lemma}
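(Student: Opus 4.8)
The plan is to show $K_N$ is compact by a standard argument: take an arbitrary sequence $\{z^{\phi_n}\}\subset K_N$ with $\phi_n\in S_N$, and extract a subsequence converging in the target space $L^\infty([0,T],H)\cap L^2([0,T],\tilde H^{1,0})\cap C([0,T],H^{-1})$ to some $z^\phi$ with $\phi\in S_N$. Since $S_N$ is weakly compact (bounded, closed, convex subset of the Hilbert space $L^2([0,T],l^2)$), we may first pass to a subsequence so that $\phi_n\to\phi$ weakly in $L^2([0,T],l^2)$ with $\phi\in S_N$. The work is then to show $z^{\phi_n}\to z^\phi$ in the required topology, where $z^\phi$ is the unique solution from Lemma \ref{solution to skeleton eq.}.

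First I would record the uniform bounds. Because $\phi_n\in S_N$, the a priori estimates \eqref{a priori z1} and \eqref{a priori z2} (with $\int_0^t\|\phi_n\|^2_{l^2}ds\le N$) give a uniform bound on $\{z^{\phi_n}\}$ in $L^\infty([0,T],\tilde H^{0,1})\cap L^2([0,T],\tilde H^{1,1})$. As in the proof of Lemma \ref{solution to skeleton eq.}, this yields uniform bounds on $B(z^{\phi_n})$ in $L^2([0,T],H^{-1})$ and on $\sigma(\cdot,z^{\phi_n})\phi_n$ in $L^p([0,T],H^{-1})$ for $p\in(1,\tfrac43)$, hence $\{\partial_t z^{\phi_n}\}$ is uniformly bounded in $L^p([0,T],H^{-1})$. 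By the Aubin--Lions lemma (\cite[Lemma 3.6]{LZZ18}) and \cite[Theorem 2.2]{FG95}, along a further subsequence $z^{\phi_n}\to z$ strongly in $L^2([0,T],H)$ and in $C([0,T],H^{-1-\delta})$, weakly in $L^2([0,T],\tilde H^{1,1})$, and weak-star in $L^\infty([0,T],\tilde H^{0,1})$, for some $z$ in the intersection space.

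Next I would identify the limit: pass to the limit in the weak formulation of \eqref{skeleton eq.} for $z^{\phi_n}$. The linear and time-derivative terms pass by the weak/strong convergences; the nonlinear term passes by \cite[Chapter 3, Lemma 3.2]{Te79} using strong $L^2([0,T],H)$ convergence; the term $\sigma(\cdot,z^{\phi_n})\phi_n$ converges to $\sigma(\cdot,z)\phi$ because $\sigma(\cdot,z^{\phi_n})\to\sigma(\cdot,z)$ strongly in $L^2([0,T],L_2(l^2,H))$ by (A3) with $L_2=0$ and the strong $L^2([0,T],H)$ convergence of $z^{\phi_n}$, while $\phi_n\rightharpoonup\phi$ weakly — a product of strong times weak convergence. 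Thus $z$ solves \eqref{skeleton eq.} with control $\phi$, so by uniqueness $z=z^\phi\in K_N$.

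Finally I must upgrade the convergence to the topology of $L^\infty([0,T],H)\cap L^2([0,T],\tilde H^{1,0})\cap C([0,T],H^{-1})$; this is the main obstacle, since weak convergence in $L^2([0,T],\tilde H^{1,1})$ is not enough. I would follow the argument of \cite[Proposition 4.5]{WZZ}: set $w_n=z^{\phi_n}-z^\phi$, apply the energy identity to $w_n$ in $H$, and use the estimate on $b(w_n,z^\phi,w_n)$ from Lemma \ref{anisotropic estimate for b} (absorbing a fraction of $\|\partial_1 w_n\|_H^2$) together with (A3), $L_2=0$, to obtain
\[
\|w_n(t)\|_H^2+\int_0^t\|\partial_1 w_n(s)\|_H^2\,ds\le \Big(\text{terms}\to 0\Big)\,e^{C\int_0^t(1+\|z^\phi(s)\|^2_{\tilde H^{1,1}}+\|\phi_n(s)\|^2_{l^2})\,ds},
\]
where the vanishing terms come from $\|w_n(0)\|_H=0$ and from the cross terms involving $\sigma(\cdot,z^{\phi_n})\phi_n-\sigma(\cdot,z^\phi)\phi$ integrated against $w_n$, which tend to $0$ using the already-established strong $L^2([0,T],H)$ convergence of $z^{\phi_n}$ and the weak convergence $\phi_n\rightharpoonup\phi$ (the latter requires care: one splits $\sigma(\cdot,z^{\phi_n})\phi_n-\sigma(\cdot,z^\phi)\phi = [\sigma(\cdot,z^{\phi_n})-\sigma(\cdot,z^\phi)]\phi_n+\sigma(\cdot,z^\phi)(\phi_n-\phi)$, bounds the first piece by (A3) and uniform $l^2$-bounds of $\phi_n$ in $S_N$, and handles the second by weak convergence tested against the fixed $L^2([0,T],L_2(l^2,H))$-valued function applied to $w_n$, itself converging strongly to $0$ in $L^2([0,T],H)$). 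This yields $z^{\phi_n}\to z^\phi$ in $C([0,T],H)\cap L^2([0,T],\tilde H^{1,0})$, which in particular gives convergence in $L^\infty([0,T],H)$ and $C([0,T],H^{-1})$, completing the compactness proof.
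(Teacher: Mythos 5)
Your proposal is correct and follows essentially the same route as the paper: weak compactness of $S_N$ plus the a priori bounds \eqref{a priori z1}--\eqref{a priori z2} and Aubin--Lions to extract the convergent subsequence, identification of the limit through the weak formulation and uniqueness of solutions to \eqref{skeleton eq.}, and then an energy estimate on $w_n=z^{\phi_n}-z^\phi$ using Lemma \ref{anisotropic estimate for b} and (A3) with $L_2=0$, with the same splitting $[\sigma(\cdot,z^{\phi_n})-\sigma(\cdot,z^\phi)]\phi_n+\sigma(\cdot,z^\phi)(\phi_n-\phi)$. The only cosmetic difference is that you close the final step by Gronwall with vanishing forcing terms, whereas the paper estimates the right-hand side directly via the set-splitting ($A_\epsilon$) argument; both rest on the same fact that $\int_0^T(1+\|z^\phi(s)\|^2_{\tilde H^{1,1}})\|w_n(s)\|^2_H\,ds\to 0$.
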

\begin{proof}
By definition, we have
$$K_N=\left\{z^\phi: \phi\in L^2([0,T], l^2), \text{ }\int^T_0\|\phi(s)\|^2_{l^2}ds\leqslant N\right\}.$$

Let $\{z^{\phi_n}\}$ be a sequence in $K_N$ where $\{\phi_n\}\subset S_N$. Note that (\ref{a priori z2}) implies that $z^{\phi_n}$ is uniformly bounded in $L^\infty([0,T], H^{1,0})\cap L^2([0,T], H^{1,1})$. Thus  by weak compactness of $S_N$,  a similar argument as in the proof of Lemma \ref{solution to skeleton eq.} shows that there exists $\phi\in\mathcal{S}_N$ and $z'\in L^2([0,T],H)$ such that  the following convergence hold as $n\rightarrow \infty$ (in the sense of subsequence):  

$\phi_n\rightarrow \phi$ in $\mathcal{S}_N$ weakly,

$z^{\phi_n}\rightarrow z'$ in $L^2([0,T], H^{1,0})$ weakly,

$z^{\phi_n}\rightarrow z'$ in $L^\infty([0,T], H)$ weak-star,

$z^{\phi_n}\rightarrow z'$ in $L^2([0,T], H)$ strongly.

$z^{\phi_n}\rightarrow z'$ in $C([0,T], H^{-1-\delta})$ strongly for any $\delta>0$.

Then for any $\varphi\in C^\infty([0,T]\times \mathbb{T}^2)$ with $\text{div} \varphi=0$ and for any $t\in[0,T]$, $z^{\phi_n}$  satisfies
\begin{equation}\label{eq in good rate function}
\langle z^{\phi_n}(t), \varphi(t)\rangle=\langle u_0,\varphi(0)\rangle+\int^{t}_0\langle z^{\phi_n}, \partial_t\varphi\rangle-\langle \partial_1z^{\phi_n},\partial_1 \varphi\rangle+\langle- B(z^{\phi_n})+\sigma(s, z^{\phi_n})\phi_n, \varphi\rangle ds.
\end{equation}

Let $n\rightarrow \infty$, we have
\begin{align*}
&\int^{t}_0\langle \sigma(s, z^{\phi_n})\phi_n-\sigma(s,z')\phi, \varphi\rangle ds\\
=&\int^{t}_0\langle [\sigma(s, z^{\phi_n})-\sigma(s,z')]\phi_n+\sigma(s,z')(\phi_n-\phi), \varphi\rangle ds\\
\leqslant& \int^{t}_0 \|(\sigma(s, z^{\phi_n})-\sigma(s,z'))\phi_n\|_{H}\|\varphi\|_Hds+\int^{t}_0\langle\sigma(s,z')(\phi_n-\phi), \varphi\rangle ds\\
\leqslant & C\int^{t}_0 \|\sigma(s, z^{\phi_n})-\sigma(s,z')\|_{L_2(l^2,H)}\|\phi_n\|_{l^2}ds+\int^{t}_0\langle\sigma(s,z')(\phi_n-\phi), \varphi\rangle ds\\
\leqslant & C\left(\int^{t}_0 \|z^{\phi_n}-z'\|^2_{H}ds\right)^\frac{1}{2}\left(\int^t_0\|\phi_n(s)\|^2_{l^2}ds\right)^\frac{1}{2}+\int^{t}_0\langle\sigma(s,z')(\phi_n-\phi), \varphi\rangle ds\\
\rightarrow&\text{ }0,
\end{align*}
where we used H\"older's inequality and (A3) with $L_2=0$ in the last inequality.  By \cite[Chapter 3, Lemma 3.2]{Te79} we also have 
$$\int^{t}_0\langle- B(z^{\phi_n}), \varphi\rangle ds\rightarrow \int^{t}_0\langle- B(z'), \varphi\rangle ds.$$

Then we deduce that
$$\langle z'(t), \varphi(t)\rangle=\langle u_0,\varphi(0)\rangle+\int^{t}_0\langle z', \partial_t\varphi\rangle-\langle \partial_1z',\partial_1 \varphi\rangle+\langle- B(z')+\sigma(s, z')\phi, \varphi\rangle ds,$$
which implies that  $z'$ is a solution to (\ref{skeleton eq.}). By the uniqueness of solution, we deduce that $z'=z^\phi$.

Our goal is to prove $z^{\phi_n}\rightarrow z^\phi$  in $L^\infty([0,T], H)\bigcap L^2([0,T], \tilde{H}^{1,0})\bigcap C([0,T],H^{-1})$. 

Let $w^n=z^{\phi_n}-z^\phi$, by a direct calculation, we have
\begin{align*}
&\|w^n(t)\|^2_H+2\int^t_0\|\partial_1w^n(s)\|^2_Hds\\
=&-2\int^t_0\langle w^n(s), B(z^{\phi_n})(s)-B(z^\phi)(s)\rangle ds\\
&+2\int^t_0\langle w^n(s), \sigma(s, z^{\phi_n}(s))\phi_n(s)-\sigma(s, z^\phi(s))\phi(s)\rangle ds\\
=&-2\int^t_0 b(w^n, z^\phi, w^n)(s)ds+2\int^t_0\langle w^n(s), (\sigma(s,z^{\phi_n}(s))-\sigma(s,z^\phi(s)))\phi_n(s) \rangle ds\\
&+2\int^t_0\langle w^n(s), \sigma(s, z^\phi(s))(\phi_n(s)-\phi(s))\rangle ds\\
\leqslant & \int^t_0 \frac{1}{5}\|\partial_1w^n(s)\|^2_Hds+C\int^t_0(1+\|z^\phi(s)\|^2_{\tilde{H}^{1,1}})\|w^n(s)\|^2_Hds\\
&+C\int^t_0\|w^n(s)\|^2_H\|\phi_n(s)\|_{l^2}ds\\
&+\int^t_0\|w^n(s)\|_H\|\phi_n(s)-\phi(s)\|_{l^2}(K_0+K_1\|z^\phi(s)\|^2_H+K_2\|\partial_1 z^\phi(s)\|^2_H )^\frac{1}{2} ds,
\end{align*}
where we used Lemma \ref{anisotropic estimate for b} in the sixth line, (A3) with $L_2=0$ in the seventh line and (A1) in the last line. Then we have
\begin{align*}
&\sup_{t\in[0,T]}\|w^n(t)\|^2_H+\int^T_0\|\partial_1w^n(s)\|^2_Hds\\
\leqslant &C\int^T_0(1+\|z^\phi(s)\|^2_{\tilde{H}^{1,1}})\|w^n(s)\|^2_Hds\\
+& C(\sup_{t\in[0,T]}\|z^{\phi_n}(t)\|_H+\sup_{t\in[0,T]}\|z^\phi(t)\|_H)\left(\int^T_0\|\phi_n(s)\|^2_{l^2}ds\right)^\frac{1}{2}\left(\int^T_0\|w^n(s)\|^2_Hds\right)^\frac{1}{2}\\
+& C\left(\int^T_0\|\phi_n(s)-\phi(s)\|^2_{l^2}ds\right)^\frac{1}{2}\left(\int^T_0 (1+\|z^\phi(s)\|^2_H+\|\partial_1z^\phi(s)\|^2_H)\|w^n(s)\|^2_Hds\right)^\frac{1}{2}\\
\leqslant &C\int^T_0(1+\|z^\phi(s)\|^2_{\tilde{H}^{1,1}})\|w^n(s)\|^2_Hds+C(N)\left(\int^T_0\|w^n(s)\|^2_Hds\right)^\frac{1}{2}\\
+& CN^\frac{1}{2}\left(\int^T_0 (1+\|z^\phi(s)\|^2_H+\|\partial_1z^\phi(s)\|^2_H)\|w^n(s)\|^2_Hds\right)^\frac{1}{2},
\end{align*}
where we used (\ref{a priori z1}) and the fact that $\phi_n$, $\phi$ are in $\mathcal{S}_N$.

For any $\epsilon>0$, let 
$$A_\epsilon:=\{s\in[0,T]; \|z^{\phi_n}(s)-z^\phi(s)\|_H>\epsilon\}.$$
Since $z^{\phi_n}\rightarrow z^\phi$ in $L^2([0,T], H)$ strongly, we have
$$\int^T_0\|w^n(s)\|^2_Hds\rightarrow 0,\text{ as }n\rightarrow\infty$$
and $\lim_{n\rightarrow\infty}Leb(A_\epsilon)=0$, where $Leb(B)$ means the Lebesgue measure of $B\in\mathcal{B}(\mathbb{R})$. Thus we have
\begin{align*}
&\int^T_0(1+\|z^\phi(s)\|^2_{\tilde{H}^{1,1}})\|w^n(s)\|^2_Hds\\
\leqslant&\left(\int_{ A_\epsilon}+\int_{[0,T]\setminus A_\epsilon}\right)(1+\|z^\phi(s)\|^2_{\tilde{H}^{1,1}})\|w^n(s)\|^2_Hds\\
\leqslant& C\epsilon+ 2\int_{A_\epsilon}(1+\|z^\phi(s)\|^2_{\tilde{H}^{1,1}})(\|z^{\phi_n}(s)\|^2_H+\|z^\phi(s)\|^2_H)ds\\
\leqslant&C\epsilon+ C\int_{A_\epsilon}(1+\|z^\phi(s)\|^2_{\tilde{H}^{1,1}})ds\\
\rightarrow & \text{ }C\epsilon\text{ as } n\rightarrow \infty,
\end{align*}
where we used (\ref{a priori z1}) in the forth line and (\ref{a priori z2}) in the last line. A  similar argument also implies that 
\begin{align*}
\int^T_0 (1+\|z^\phi(s)\|^2_H+\|\partial_1z^\phi(s)\|^2_H)\|w^n(s)\|^2_Hds\leqslant C\epsilon.
\end{align*}
Hence we have
\begin{align*}
\sup_{t\in[0,T]}\|w^n(t)\|^2_H+\int^T_0\|\partial_1w^n(s)\|^2_Hds\leqslant C\epsilon+C\sqrt{\epsilon} \text{ as }n\rightarrow \infty.
\end{align*}

Since $\epsilon$ is arbitrary, we obtain that 
$$z^{\phi^n}\rightarrow z^\phi\text{ strongly in }L^\infty([0,T], H)\bigcap L^2([0,T], \tilde{H}^{1,0})\bigcap C([0,T],H^{-1}).$$

\end{proof}

For next step, consider the following equation:
\begin{equation}\label{eq. for weak convergence}\aligned
dZ^\varepsilon_v(t)&=\partial^2_1 Z^\varepsilon_v(t)dt-B(Z_v^\varepsilon(t))dt+\sigma(t, Z^\varepsilon_v(t))v^\varepsilon(t)dt+\sqrt{\varepsilon}\sigma(t, Z_v^\varepsilon(t))dW(t),\\
\text{div}Z^\varepsilon_v&=0,\\
 Z^\varepsilon_v(0)&=u_0,
\endaligned
\end{equation}
where $v^\varepsilon\in\mathcal{A}_N$ for some $N<\infty$.
Here $Z^\varepsilon_v$ should have been denoted $Z^\varepsilon_{v^\varepsilon}$ and the slight abuse of notation is for simplicity. 

\begin{lemma}\label{Girsanov thm to prove existence}
Assume (A0)-(A3) hold with $L_2=0$ and $v^\varepsilon\in\mathcal{A}_N$ for some $N<\infty$. Then $Z_v^\varepsilon=g^\varepsilon\left(W(\cdot)+\frac{1}{\sqrt{\varepsilon}}\int^\cdot_0v^\varepsilon(s)ds\right)$ is the unique strong solution to (\ref{eq. for weak convergence}).
\end{lemma}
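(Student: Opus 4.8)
The plan is to deduce the statement from the Girsanov theorem combined with the Yamada--Watanabe characterization of $g^\varepsilon$ recalled just before \eqref{skeleton eq.}. Since $v^\varepsilon\in\mathcal{A}_N$ we have $\int_0^T\|v^\varepsilon(s)\|_{l^2}^2\,ds\leqslant N$ $P$-a.s., so the exponential process
$$\mathcal{E}^\varepsilon_t:=\exp\left(-\frac{1}{\sqrt\varepsilon}\int_0^t\langle v^\varepsilon(s),dW(s)\rangle_{l^2}-\frac{1}{2\varepsilon}\int_0^t\|v^\varepsilon(s)\|_{l^2}^2\,ds\right)$$
satisfies Novikov's condition (indeed $E[\exp(\tfrac{1}{2\varepsilon}\int_0^T\|v^\varepsilon\|_{l^2}^2ds)]\leqslant e^{N/(2\varepsilon)}<\infty$) and is a genuine $(\mathcal{F}_t,P)$-martingale; hence $d\tilde P^\varepsilon:=\mathcal{E}^\varepsilon_T\,dP$ defines a probability measure equivalent to $P$ on $\mathcal{F}_T$. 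By the infinite-dimensional Girsanov theorem (see e.g. the appendix of \cite{LR15}), under $\tilde P^\varepsilon$ the process $\hat W(t):=W(t)+\tfrac{1}{\sqrt\varepsilon}\int_0^t v^\varepsilon(s)\,ds$ is again an $l^2$-cylindrical Wiener process with the same covariance; its paths lie in $C([0,T];U)$ because $\int_0^\cdot v^\varepsilon(s)\,ds$ is absolutely continuous with values in $l^2\subset U$, so $g^\varepsilon$ may be evaluated at $\hat W$.

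Next I would use the defining property of $g^\varepsilon$: for a cylindrical Wiener process, $g^\varepsilon$ returns the unique strong solution of \eqref{LDP eq.} driven by it (Lemma \ref{ex. and uniq. of solution}). Applying this under $\tilde P^\varepsilon$ with driving noise $\hat W$, the process $Z^\varepsilon_v:=g^\varepsilon(\hat W)$ lies in $L^\infty([0,T],\tilde H^{0,1})\cap L^2([0,T],\tilde H^{1,1})\cap C([0,T],H^{-1})$ and satisfies, $\tilde P^\varepsilon$-a.s.,
$$dZ^\varepsilon_v(t)=\partial_1^2Z^\varepsilon_v(t)\,dt-B(Z^\varepsilon_v(t))\,dt+\sqrt\varepsilon\,\sigma(t,Z^\varepsilon_v(t))\,d\hat W(t).$$
Substituting $d\hat W(t)=dW(t)+\tfrac{1}{\sqrt\varepsilon}v^\varepsilon(t)\,dt$, and noting that the It\^o integral $\int_0^\cdot\sqrt\varepsilon\,\sigma(s,Z^\varepsilon_v)\,dW(s)$ is a limit in probability of Riemann sums and is therefore unchanged when passing between the equivalent measures $\tilde P^\varepsilon$ and $P$, we obtain that $Z^\varepsilon_v$ solves \eqref{eq. for weak convergence} $\tilde P^\varepsilon$-a.s. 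Since $\tilde P^\varepsilon\sim P$, this identity of continuous $H^{-1}$-valued processes holds $P$-a.s. as well; under $P$ the process $W$ is a cylindrical Wiener process, so $Z^\varepsilon_v=g^\varepsilon\big(W(\cdot)+\tfrac1{\sqrt\varepsilon}\int_0^\cdot v^\varepsilon(s)\,ds\big)$ is a strong solution of \eqref{eq. for weak convergence} with the required regularity.

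For uniqueness I would run the same change of measure in reverse: if $Z_1,Z_2$ are two solutions of \eqref{eq. for weak convergence} under $P$, then writing $dW=d\hat W-\tfrac1{\sqrt\varepsilon}v^\varepsilon\,dt$ cancels the drift term $\sigma(t,Z_i)v^\varepsilon$, so under $\tilde P^\varepsilon$ both $Z_1$ and $Z_2$ solve \eqref{LDP eq.} driven by the Wiener process $\hat W$; the pathwise uniqueness contained in Lemma \ref{ex. and uniq. of solution} then gives $Z_1=Z_2$ $\tilde P^\varepsilon$-a.s., hence $P$-a.s. Alternatively one can argue uniqueness directly via an energy estimate on $w=Z_1-Z_2$ together with Gronwall's inequality, exactly as in the uniqueness part of Lemma \ref{solution to skeleton eq.}, the additional term $\sigma(t,Z_1)v^\varepsilon-\sigma(t,Z_2)v^\varepsilon$ being absorbed using (A3) with $L_2=0$ and $\int_0^T\|v^\varepsilon\|_{l^2}^2\,ds\leqslant N$.

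The argument is essentially routine; the points deserving care are the verification that $\mathcal{E}^\varepsilon$ is a true martingale (immediate here from the $P$-a.s. bound on $v^\varepsilon$ in $L^2([0,T];l^2)$), the correct bookkeeping of the It\^o integral under the two mutually equivalent measures, and the observation that $g^\varepsilon$, a priori only a measurable functional on Wiener paths, genuinely produces a solution when evaluated at the shifted path $\hat W$ — which is precisely because $\hat W$ is a cylindrical Wiener process under $\tilde P^\varepsilon$ and $g^\varepsilon$ is the Yamada--Watanabe solution map.
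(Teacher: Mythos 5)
Your existence argument is the same as the paper's: apply Girsanov to see that $\tilde W=W+\tfrac1{\sqrt\varepsilon}\int_0^\cdot v^\varepsilon\,ds$ is an $l^2$-cylindrical Wiener process under the exponentially tilted measure, feed it into the Yamada--Watanabe solution map $g^\varepsilon$ of (\ref{LDP eq.}), rewrite the noise term, and use equivalence of the measures to read off that $Z^\varepsilon_v=g^\varepsilon(\tilde W)$ solves (\ref{eq. for weak convergence}) under $P$; this matches the paper step by step (including the remark that the stochastic integral is unaffected by the equivalent change of measure). Where you diverge is uniqueness: the paper proves it directly under $P$ by applying It\^o's formula to $e^{-q(t)}\|Z^\varepsilon_v-\tilde Z^\varepsilon_v\|_H^2$ with the random weight $q(t)=k\int_0^t(\|Z^\varepsilon_v\|^2_{\tilde H^{1,1}}+\|v^\varepsilon\|^2_{l^2})ds$, controlling the trilinear term by Lemma \ref{anisotropic estimate for b}, the control term by (A3) with $L_2=0$, and the martingale term by Burkholder--Davis--Gundy before Gronwall; your primary route instead changes measure back and invokes the pathwise uniqueness of (\ref{LDP eq.}) contained in Lemma \ref{ex. and uniq. of solution}. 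That shortcut is legitimate (the density depends only on $v^\varepsilon$ and $W$, so there is no circularity, and the cited uniqueness is pathwise uniqueness on a given basis, hence applies to $\mathcal F_t$-adapted solutions driven by $\tilde W$), and it buys brevity at the cost of leaning on the full strength of the cited result; conversely, your fallback option phrased as ``exactly as in Lemma \ref{solution to skeleton eq.}'' understates what is needed in the stochastic setting --- one must also insert the exponential weight with the random exponent and estimate the stochastic integral via BDG (Lemma \ref{martingale lemma} or its analogue), which is precisely the extra work the paper carries out. Both routes require, as you implicitly assume, that competing solutions of (\ref{eq. for weak convergence}) carry the $L^\infty([0,T],\tilde H^{0,1})\cap L^2([0,T],\tilde H^{1,1})$ regularity used either in $q(t)$ or in the uniqueness class of Lemma \ref{ex. and uniq. of solution}.
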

\begin{proof}
Since $v^\varepsilon\in \mathcal{A}_N$,  by the Girsanov theorem (see \cite[Appendix I]{LR15}), $\tilde{W}(\cdot):=W(\cdot)+\frac{1}{\sqrt{\varepsilon}}\int^\cdot_0v^\varepsilon(s)ds$ is an $l^2$-cylindrical Wiener-process under the probability measure
$$d\tilde{P}:=\exp\left\{-\frac{1}{\sqrt{\varepsilon}}\int^T_0v^\varepsilon(s)dW(s)-\frac{1}{2\varepsilon}\int^T_0\|v^\varepsilon(s)\|^2_{l^2}ds\right\}dP.$$
Then $(Z_v^\varepsilon, \tilde{W})$ is the solution to (\ref{LDP eq.}) on the stochastic basis $(\Omega, \mathcal{F}, \tilde{P})$. By (A0) we have
\begin{align*}
\int^T_0\|\sigma(s,Z^\varepsilon_v(s))\|_{H^{-1}}ds<\infty.
\end{align*}
Then $(Z^\varepsilon_v, W)$ satisfies the condition of the definition of weak solution (see \cite[Definition 4.1]{LZZ18}) and hence is a weak solution to (\ref{eq. for weak convergence}) on the stochastic basis $(\Omega, \mathcal{F}, {P})$ and $Z^\varepsilon_v=g^\varepsilon\left(W(\cdot)+\frac{1}{\sqrt{\varepsilon}}\int^\cdot_0v^\varepsilon(s)ds\right)$.

If $\tilde{Z^\varepsilon_v}$ and $Z^\varepsilon_v$ are two weak solutions to  (\ref{eq. for weak convergence}) on the same stochastic basis $(\Omega, \mathcal{F}, {P})$.  Let $W^\varepsilon=Z^\varepsilon_v-\tilde{Z^\varepsilon_v}$ and $q(t)=k\int^t_0(\|Z^\varepsilon_v(s)\|^2_{\tilde{H}^{1,1}}+\|v^\varepsilon(s)\|^2_{l^2})ds$ for some constant $k$. Applying It\^o's formula to $e^{-q(t)}\|W^\varepsilon(t)\|^2_H$, we have 
\begin{align*}
&e^{-q(t)}\|W^\varepsilon(t)\|^2_H+2\int^t_0e^{-q(s)}\|\partial_1 W^\varepsilon(s)\|^2_Hds\\
=&-k\int^t_0e^{-q(s)}\|W^\varepsilon(s)\|^2_H(\|Z^\varepsilon_v(s)\|^2_{\tilde{H}^{1,1}}+\|v^\varepsilon(s)\|^2_{l^2})ds-2\int^t_0e^{-q(s)}b(W^\varepsilon, Z^\varepsilon_v, W^\varepsilon)ds\\
&+2\int^t_0e^{-q(s)}\langle \sigma(s,Z^\varepsilon_v)v^\varepsilon-\sigma(s, \tilde{Z}^\varepsilon_v)v^\varepsilon, W^\varepsilon(s)\rangle ds\\
&+2\sqrt{\varepsilon}\int^t_0e^{-q(s)}\langle W^\varepsilon(s), (\sigma(s,Z^\varepsilon_v)-\sigma(s, \tilde{Z}^\varepsilon_v))dW(s)\rangle\\
&+\varepsilon\int^t_0e^{-q(s)}\|\sigma(s,Z^\varepsilon_v)-\sigma(s, \tilde{Z}^\varepsilon_v)\|^2_{L_2(l^2,H)}ds.
\end{align*}
By Lemma \ref{anisotropic estimate for b}, there exists constants $\tilde{\alpha}\in(0,1)$ and $\tilde{C}$ such that 
$$|b(W^\varepsilon, Z^\varepsilon_v, W^\varepsilon)|\leqslant \tilde{\alpha}\|\partial_1 W^\varepsilon\|^2_H+\tilde{C}(1+\|Z^\varepsilon_v\|^2_{\tilde{H}^{1,1}})\|W^\varepsilon\|^2_H.$$
We also have 
\begin{align*}
2|\langle \sigma(s,Z^\varepsilon_v)v^\varepsilon-\sigma(s, \tilde{Z}^\varepsilon_v)v^\varepsilon, W^\varepsilon\rangle|&\leqslant 2\|(\sigma(s,Z^\varepsilon_v)-\sigma(s, \tilde{Z}^\varepsilon_v))v^\varepsilon\|_H\|W^\varepsilon\|_H\\
&\leqslant \|\sigma(s,Z^\varepsilon_v)-\sigma(s, \tilde{Z}^\varepsilon_v)\|^2_{L_2(l^2,H)}+\|v^\varepsilon\|^2_{l^2}\|W^\varepsilon\|^2_H.
\end{align*}

Let $k>2\tilde{C}$ and we may  assume $\varepsilon<\frac{16}{25}$,  by (A3) with $L_2=0$ we have 
\begin{align*}
&e^{-q(t)}\|W^\varepsilon(t)\|^2_H+(2-2\tilde{\alpha})\int^t_0e^{-q(s)}\|\partial_1 W^\varepsilon(s)\|^2_Hds\\
\leqslant& C\int^t_0e^{-q(s)}\|W^\varepsilon(s)\|^2_Hds+2\sqrt{\varepsilon}\int^t_0e^{-q(s)}\langle W^\varepsilon(s),  (\sigma(s,Z^\varepsilon_v)-\sigma(s, \tilde{Z}^\varepsilon_v))dW(s)\rangle.
\end{align*}

By the Burkh\"older-Davis-Gundy's inequality (see \cite[Appendix D]{LR15}), we have 
\begin{align*}
&2\sqrt{\varepsilon}|E[\sup_{r\in[0,t]}\int^r_0e^{-q(s)}\langle W^\varepsilon(s), (\sigma(s,Z^\varepsilon_v)-\sigma(s, \tilde{Z}^\varepsilon_v))dW(s)\rangle]|\\
\leqslant &6\sqrt{\varepsilon}E\left(\int^t_0e^{-2q(s)}\|\sigma(s,Z^\varepsilon_v)-\sigma(s, \tilde{Z}^\varepsilon_v)\|^2_{L_2(l^2,H)}\|W^\varepsilon(s)\|^2_Hds\right)^\frac{1}{2}\\
\leqslant& \sqrt{\varepsilon}E(\sup_{s\in[0,t]}(e^{-q(s)}\|W^\varepsilon(s)\|^2_H))+9\sqrt{\varepsilon} E\int^t_0e^{-q(s)}L_1\|W^\varepsilon(s)\|^2_Hds,
\end{align*}
where we used (A3) with $L_2=0$ and assume that  $\tilde{\alpha}<1$.

Thus we have
$$E(\sup_{s\in[0,t]}(e^{-q(s)}\|W^\varepsilon(s)\|^2_H))\leqslant CE\int^t_0e^{-q(s)}\|W^\varepsilon(s)\|^2_Hds.$$
By the Gronwall's inequality we obtain $W^\varepsilon=0$ $P$-a.s., i.e. $\tilde{Z^\varepsilon_v}=Z^\varepsilon_v$ $P$-a.s..

 Then by the Yamada-Watanabe theorem,  we have $Z^\varepsilon_v$ is the unique strong solution to (\ref{eq. for weak convergence}).
\end{proof}

\begin{lemma}\label{estimate eq. for weak convergence}
Assume $Z^\varepsilon_v$ is a solution to (\ref{eq. for weak convergence}) with $v^\varepsilon\in\mathcal{A}_N$ and $\varepsilon<1$ small enough. Then we have
\begin{equation}\label{eq01 in lemma estimate eq. for weak convergence}
E(\sup_{t\in[0,T]}\|Z^\varepsilon_v(t)\|^4_H)+E\int^T_0\|Z^\varepsilon_v(s)\|^2_H\|Z^\varepsilon_v(s)\|^2_{\tilde{H}^{1,0}}ds+E\int^T_0\|\partial_1 Z^\varepsilon_v(s)\|^2_{H}ds\leqslant C(N,u_0).
\end{equation}
\iffalse
Define a stopping time with respect to $\mathcal{F}_{t}$:
$$\tau^{M,\varepsilon}_1=T\wedge\inf\{t>0: \int^t_0\|\partial_1 Z^\varepsilon_v(s)\|^2_{H}ds>M\},$$
\fi
Moreover, there exists $k>0$ such that 
\begin{equation}\label{eq02 in lemma estimate eq. for weak convergence}
E(\sup_{t\in[0,T]}e^{-kg(t)}\|Z^\varepsilon_v(t)\|^2_{\tilde{H}^{0,1}})+E\int^{T}_0e^{-kg(s)}\|Z^\varepsilon_v(s)\|^2_{\tilde{H}^{1,1}}ds\leqslant C(N,u_0),
\end{equation}
where $g(t)=\int^t_0\|Z^\varepsilon_v(s)\|^2_{H}ds$ and $C(N,u_0)$ is a constant depend on $N, u_0$ but independent of $\varepsilon$.

\iffalse
We also have
\begin{equation}\label{L4 estimate}
E(\sup_{t\in[0,T]}\|Z^\varepsilon_v(t)\|^4_H)+E\int^T_0\|Z^\varepsilon_v(s)\|^2_H\|Z^\varepsilon_v(s)\|^2_{\tilde{H}^{1,0}}ds\leqslant C(N, u_0).
\end{equation}
\fi

\end{lemma}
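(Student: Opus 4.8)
The plan is to run two successive energy estimates on \eqref{eq. for weak convergence}, first controlling the $H$-norm (together with its fourth moment) and then, bootstrapping on that, controlling the $\tilde H^{0,1}$-norm with a random exponential weight. For \eqref{eq01 in lemma estimate eq. for weak convergence} I would apply It\^o's formula to $\|Z^\varepsilon_v(t)\|^2_H$; the nonlinear term drops out since $b(Z^\varepsilon_v,Z^\varepsilon_v,Z^\varepsilon_v)=0$, the anisotropic dissipation gives $+2\int_0^t\|\partial_1 Z^\varepsilon_v\|_H^2$, the drift-control term $2\int_0^t\langle Z^\varepsilon_v,\sigma(s,Z^\varepsilon_v)v^\varepsilon\rangle ds$ is bounded using Cauchy--Schwarz, (A1), and $v^\varepsilon\in\mathcal{A}_N$ by
$$2\|Z^\varepsilon_v\|_H\|\sigma(s,Z^\varepsilon_v)\|_{L_2(l^2,H)}\|v^\varepsilon\|_{l^2}\leqslant \|v^\varepsilon\|_{l^2}^2\|Z^\varepsilon_v\|_H^2+K_0+K_1\|Z^\varepsilon_v\|_H^2+K_2\|\partial_1 Z^\varepsilon_v\|_H^2,$$
with $K_2<\tfrac{2}{11}<2$ so the $\|\partial_1 Z^\varepsilon_v\|_H^2$ term is absorbed into the dissipation, and the stochastic term $2\sqrt\varepsilon\int_0^t\langle Z^\varepsilon_v,\sigma(s,Z^\varepsilon_v)dW\rangle$ handled by the Burkh\"older--Davis--Gundy inequality (Lemma \ref{martingale lemma}) after taking $\sup_t$ and expectations, absorbing $\sqrt\varepsilon E(\sup_t\|Z^\varepsilon_v\|_H^2)$ into the left for $\varepsilon$ small. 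A Gronwall argument with the integrable weight $\int_0^T(1+\|v^\varepsilon\|_{l^2}^2)ds\leqslant T+N$ then yields the bound on $E(\sup_t\|Z^\varepsilon_v\|_H^2)$; for the fourth moment I would instead apply It\^o to $\|Z^\varepsilon_v(t)\|^4_H$ (or square the pathwise energy identity), which produces the cross term $\int_0^T\|Z^\varepsilon_v\|_H^2\|\partial_1 Z^\varepsilon_v\|_H^2\,ds$ on the left — hence the $\int_0^T\|Z^\varepsilon_v\|_H^2\|Z^\varepsilon_v\|^2_{\tilde H^{1,0}}ds$ term in the statement, since $\|Z^\varepsilon_v\|_{\tilde H^{1,0}}^2\simeq\|Z^\varepsilon_v\|_H^2+\|\partial_1 Z^\varepsilon_v\|_H^2$ — and close again by BDG and Gronwall.

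For \eqref{eq02 in lemma estimate eq. for weak convergence} I would apply It\^o's formula to $e^{-kg(t)}\|Z^\varepsilon_v(t)\|^2_{\tilde H^{0,1}}$, i.e. work with $\partial_2 Z^\varepsilon_v$ and use the weight $g(t)=\int_0^t\|Z^\varepsilon_v(s)\|_H^2\,ds$, which is a.s. finite by the first estimate. The $\tilde H^{0,1}$-energy identity produces the dissipation $2\int_0^t e^{-kg}(\|\partial_1 Z^\varepsilon_v\|_H^2+\|\partial_1\partial_2 Z^\varepsilon_v\|_H^2)ds$, the nonlinear contribution $-2\int_0^t e^{-kg}\langle\partial_2 Z^\varepsilon_v,\partial_2(Z^\varepsilon_v\cdot\nabla Z^\varepsilon_v)\rangle ds$ which by Lemma \ref{estimate for b with partial_2} is bounded by $\int_0^t e^{-kg}(\tfrac15\|\partial_1\partial_2 Z^\varepsilon_v\|_H^2 + C(1+\|\partial_1 Z^\varepsilon_v\|_H^2)\|\partial_2 Z^\varepsilon_v\|_H^2)ds$, the drift term controlled via (A2) by $\int_0^t e^{-kg}(\|Z^\varepsilon_v\|_{\tilde H^{0,1}}^2\|v^\varepsilon\|_{l^2}^2 + \tilde K_0 + \tilde K_1\|Z^\varepsilon_v\|_{\tilde H^{0,1}}^2 + \tilde K_2(\|\partial_1 Z^\varepsilon_v\|_H^2+\|\partial_1\partial_2 Z^\varepsilon_v\|_H^2))ds$ with $\tilde K_2<\tfrac25$ so those terms plus the $\tfrac15$ from the nonlinearity are absorbed into the dissipation, the correction term $\varepsilon\int_0^t e^{-kg}\|\sigma\|^2_{L_2(l^2,\tilde H^{0,1})}ds$ again controlled by (A2), and the martingale term by BDG as before. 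The crucial point is the $-kg'(s)e^{-kg(s)}\|Z^\varepsilon_v(s)\|_{\tilde H^{0,1}}^2 = -k\|Z^\varepsilon_v(s)\|_H^2 e^{-kg(s)}\|Z^\varepsilon_v(s)\|_{\tilde H^{0,1}}^2$ term from the weight: the remaining "bad" coefficient multiplying $e^{-kg}\|Z^\varepsilon_v\|_{\tilde H^{0,1}}^2$ after the above absorptions is of the form $C(1+\|\partial_1 Z^\varepsilon_v\|_H^2 + \|v^\varepsilon\|_{l^2}^2)$; the $\|\partial_1 Z^\varepsilon_v\|_H^2$ part is not a priori integrable in $\omega$-uniform sense but $\int_0^T\|\partial_1 Z^\varepsilon_v\|_H^2\,ds$ is controlled in expectation by \eqref{eq01 in lemma estimate eq. for weak convergence}, while $\int_0^T\|v^\varepsilon\|_{l^2}^2 ds\leqslant N$, so after taking expectations a stochastic Gronwall / Fubini argument closes the estimate.

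The main obstacle is precisely the interplay in the second estimate between the non-integrable (pathwise) coefficient $\|\partial_1 Z^\varepsilon_v\|_H^2$ and the exponential weight: one cannot apply a deterministic Gronwall pathwise because $\int_0^T\|\partial_1 Z^\varepsilon_v\|_H^2 ds$ is only finite almost surely, not bounded. The choice $g(t)=\int_0^t\|Z^\varepsilon_v\|_H^2 ds$ (rather than $\int_0^t\|\partial_1 Z^\varepsilon_v\|_H^2 ds$) is deliberate: the first estimate \eqref{eq01 in lemma estimate eq. for weak convergence} gives a uniform fourth-moment bound on $\sup_t\|Z^\varepsilon_v\|_H$, hence a uniform bound on $E\,e^{pg(T)}$-type quantities is false in general, but the cross term $\int_0^T\|Z^\varepsilon_v\|_H^2\|\partial_1 Z^\varepsilon_v\|_H^2 ds$ \emph{is} controlled in expectation, and that is exactly what is needed after expanding the $e^{-kg}$-weighted It\^o formula and using Cauchy--Schwarz on $e^{-kg}\|\partial_1 Z^\varepsilon_v\|_H^2\|Z^\varepsilon_v\|_{\tilde H^{0,1}}^2\leqslant e^{-kg}\|\partial_1 Z^\varepsilon_v\|_H^2(\|Z^\varepsilon_v\|_H^2+\|\partial_2 Z^\varepsilon_v\|_H^2)$; the first piece is the controlled cross term and the second is absorbed by the weight derivative once $k$ is large. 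Carefully tracking which constants ($K_2,\tilde K_2$, the $\tfrac15$ from Lemma \ref{estimate for b with partial_2}) are absorbed where, and choosing $k$ and the smallness of $\varepsilon$ accordingly, is the technical heart; everything else is routine BDG-plus-Gronwall bookkeeping.
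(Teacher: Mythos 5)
Your treatment of \eqref{eq01 in lemma estimate eq. for weak convergence} is essentially the paper's argument (It\^o for $\|Z^\varepsilon_v\|_H^2$ with (A1), BDG, Gronwall, then It\^o for $\|Z^\varepsilon_v\|_H^4$ to generate the cross term $\int_0^T\|Z^\varepsilon_v\|_H^2\|\partial_1Z^\varepsilon_v\|_H^2\,ds$), and that part is fine.

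The second estimate, however, has a genuine gap exactly at the step you call the technical heart. With the weight $g(t)=\int_0^t\|Z^\varepsilon_v(s)\|_H^2\,ds$, It\^o's formula contributes $-k\|Z^\varepsilon_v(s)\|_H^2\,e^{-kg(s)}\|Z^\varepsilon_v(s)\|^2_{\tilde H^{0,1}}$, and this cannot absorb the piece $C_1\,e^{-kg(s)}\|\partial_1 Z^\varepsilon_v(s)\|_H^2\|\partial_2 Z^\varepsilon_v(s)\|_H^2$ coming from Lemma \ref{estimate for b with partial_2}: there is no fixed $k$ with $C_1\|\partial_1 Z^\varepsilon_v\|_H^2\leqslant k\|Z^\varepsilon_v\|_H^2$ pointwise, so the claimed absorption "once $k$ is large" fails. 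The fallback you sketch (take expectations and run a stochastic Gronwall/Fubini argument) does not close either: the coefficient $\|\partial_1 Z^\varepsilon_v\|_H^2$ multiplies the unknown quantity $e^{-kg}\|Z^\varepsilon_v\|^2_{\tilde H^{0,1}}$ inside the time integral, and all that \eqref{eq01 in lemma estimate eq. for weak convergence} gives is $E\int_0^T\|\partial_1 Z^\varepsilon_v\|_H^2\,ds<\infty$ — no pathwise bound and no exponential moments — which is not enough for a Gronwall argument in expectation (you would need to decouple $\sup_t e^{-kg}\|Z^\varepsilon_v\|^2_{\tilde H^{0,1}}$ from $\int_0^T\|\partial_1 Z^\varepsilon_v\|_H^2\,ds$, and Cauchy--Schwarz only trades this for fourth moments you do not control). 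Your splitting $\|Z^\varepsilon_v\|^2_{\tilde H^{0,1}}\leqslant\|Z^\varepsilon_v\|_H^2+\|\partial_2 Z^\varepsilon_v\|_H^2$ handles only the first piece via the cross term; the second piece is precisely the problematic one.

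The cure — and what the paper's own proof actually does, despite the way $g$ is written in the statement — is to put $\|\partial_1 Z^\varepsilon_v\|_H^2$ itself (together with $\|v^\varepsilon\|^2_{l^2}$) into the exponent: take $h(t)=k\int_0^t\|\partial_1 Z^\varepsilon_v(s)\|_H^2\,ds+\int_0^t\|v^\varepsilon(s)\|^2_{l^2}\,ds$ and apply It\^o to $e^{-h(t)}\|Z^\varepsilon_v(t)\|^2_{\tilde H^{0,1}}$. Then the weight contributes $-(k\|\partial_1 Z^\varepsilon_v\|_H^2+\|v^\varepsilon\|^2_{l^2})e^{-h}\|Z^\varepsilon_v\|^2_{\tilde H^{0,1}}$, so choosing $k>2C_1$ absorbs the bad nonlinear term pointwise, the Young term $\|v^\varepsilon\|^2_{l^2}\|Z^\varepsilon_v\|^2_{\tilde H^{0,1}}$ from the control drift is absorbed by the second part of $h$, (A2) with $\tilde K_2<\tfrac25$ and $\varepsilon$ small handles the dissipative balance, and BDG (Lemma \ref{martingale lemma}) plus Gronwall finish exactly as in your first part; the factor $e^{\int_0^T\|v^\varepsilon\|^2_{l^2}ds}\leqslant e^N$ is then removed at the end. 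The occurrence of $\|Z^\varepsilon_v\|_H^2$ in the definition of $g$ in the statement appears to be a misprint — the proof, and the weight $e^{-k\int_0^t\|\partial_1 u(s)\|_H^2 ds}$ used in the subsequent tightness argument (Lemma \ref{tightness lemma}), both use $\int_0^t\|\partial_1 Z^\varepsilon_v(s)\|_H^2\,ds$ — so the statement should be proved in that form rather than with the weight you adopted.
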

\begin{proof}
We prove (\ref{eq01 in lemma estimate eq. for weak convergence}) by two parts of estimates. 
For first step, applying It\^o's formula to $\|Z^\varepsilon_v(t)\|^2_H$, we have
\begin{align*}
&\|Z^\varepsilon_v(t)\|^2_H+2\int^t_0\|\partial_1 Z^\varepsilon_v(s)\|^2_Hds\\
=&\|u_0\|^2_H+2\int^t_0 \langle Z^\varepsilon_v(s), \sigma(s,Z^\varepsilon_v(s))v^\varepsilon(s)\rangle ds\\
&+2\sqrt{\varepsilon}\int^t_0\langle Z^\varepsilon_v(s), \sigma(s,Z^\varepsilon_v(s))dW(s)\rangle+\varepsilon\int^t_0\|\sigma(s,Z^\varepsilon_v(s))\|^2_{L_2(l^2, H)}ds\\
\leqslant&\|u_0\|^2_H+ \int^t_0(\|Z^\varepsilon(s)\|^2_H\|v^\varepsilon(s)\|^2_{l^2}+\|\sigma(s, Z^\varepsilon_v(s))\|^2_{L_2(l^2,H)})ds\\
&+2\sqrt{\varepsilon}\int^t_0\langle Z^\varepsilon_v(s), \sigma(s,Z^\varepsilon_v(s))dW(s)\rangle+\varepsilon\int^t_0\|\sigma(s,Z^\varepsilon_v(s))\|^2_{L_2(l^2, H)}ds\\
\leqslant &\|u_0\|^2_H+\int^t_0\|Z^\varepsilon_v(s)\|^2_H\|v^\varepsilon(s)\|^2_{l^2}ds+ (1+\varepsilon)\int^t_0(K_0+K_1\|Z^\varepsilon_v\|^2_H+K_2\|\partial_1Z^\varepsilon_v\|_H^2)ds\\
&+2\sqrt{\varepsilon}\int^t_0\langle Z^\varepsilon_v(s), \sigma(s,Z^\varepsilon_v(s))dW(s)\rangle,
\end{align*}
where we used (A1) in the last inequality.

By Gronwall's inequality and $v^\varepsilon\in \mathcal{A}_N$, 
\begin{align*}
&\|Z^\varepsilon_v(t)\|^2_H+(2-(1+\varepsilon)K_2)\int^t_0\|\partial_1 Z^\varepsilon_v(s)\|^2_Hds\\
\leqslant &(\|u_0\|^2_H+C+2\sqrt{\varepsilon}\int^t_0\langle Z^\varepsilon_v(s), \sigma(s,Z^\varepsilon_v(s))dW(s)\rangle)e^{N+2K_1T}.
\end{align*}
For the term in the right hand side, by the Burkh\"older-Davis-Gundy inequality we have

\begin{align*}
&2\sqrt{\varepsilon}e^{N+K_1T} E\left(\sup_{0\leqslant s\leqslant t}|\int^s_0\langle Z^\varepsilon_v(r), \sigma(r,Z^\varepsilon_v(r))dW(r)\rangle|\right)\\
\leqslant &6\sqrt{\varepsilon}e^{N+K_1T}E\left(\int^t_0\|Z^\varepsilon_v(r)\|^2_H\|\sigma(r,Z^\varepsilon_v(r))\|^2_{L_2(l^2,H)}ds\right)^\frac{1}{2}\\
\leqslant &\sqrt{\varepsilon} E[\sup_{0\leqslant s\leqslant t}(\|Z^\varepsilon_v(s)\|^2_H)]+9\sqrt{\varepsilon}e^{2N+2K_1T}E\int^t_0[K_0+K_1\|Z^\varepsilon_v(s)\|^2_H+K_2\|\partial_1Z^\varepsilon_v(s)\|^2_H]ds,
\end{align*}
where  $(9\sqrt{\varepsilon}e^{2N+2K_1T}+1+\varepsilon)K_2-2<0$ (this can be done when $\varepsilon<(\frac{10}{9e^{2N+2K_1T}+1})^2$) and we used (A1) in the last inequality. Thus we have

\begin{align*}
&E[\sup_{s\in[0,t]}(\|Z^\varepsilon_v(t)\|^2_H)]+E\int^t_0\|\partial_1 Z^\varepsilon_v(s)\|^2_Hds\\
\leqslant&C(\|u_0\|^2_H+1)+C\int^t_0E[\sup_{r\in[0,s]}(\|Z^\varepsilon_v(r)\|^2_H)]ds.
\end{align*}

Then by  Gronwall's inequality  we have
\iffalse
\begin{align*}
E[\sup_{s\in[0,t]}(e^{-\int^s_0\|v^\varepsilon(r)\|^2_{l^2}dr}\|Z^\varepsilon_v(t)\|^2_H)]+E\int^t_0e^{-\int^s_0\|v^\varepsilon(r)\|^2_{l^2}dr}\|\partial_1 Z^\varepsilon_v(s)\|^2_Hds\leqslant C(\|u_0\|^2_H+1),
\end{align*}
which implies
\begin{equation}\label{eq1 in lemma estimate eq. for weak convergence}\aligned
&E(\sup_{0\leqslant t\leqslant T}\|Z^\varepsilon_v(t)\|^2_H)+E\int^T_0\|\partial_1 Z^\varepsilon_v(s)\|^2_{H}ds\\
\leqslant&E\left[(\sup_{s\in[0,t]}(e^{-\int^s_0\|v^\varepsilon(r)\|^2_{l^2}dr}\|Z^\varepsilon_v(t)\|^2_H)+\int^t_0e^{-\int^s_0\|v^\varepsilon(r)\|^2_{l^2}dr}\|\partial_1 Z^\varepsilon_v(s)\|^2_Hds)e^{\int^t_0\|v^\varepsilon(r)\|^2_{l^2}dr}\right]\\
\leqslant &C(1+\|u_0\|^2_H)e^N.
\endaligned
\end{equation}
\fi
\begin{equation}\label{eq1 in lemma estimate eq. for weak convergence}\aligned
E(\sup_{0\leqslant t\leqslant T}\|Z^\varepsilon_v(t)\|^2_H)+E\int^T_0\|\partial_1 Z^\varepsilon_v(s)\|^2_{H}ds\leqslant C(1+\|u_0\|^2_H).
\endaligned
\end{equation}

The second step is similar to \cite[Lemma 4.2]{LZZ18}. By It\^o's formula we have
\begin{equation}\label{L4 estimate step1}\aligned
\|Z^\varepsilon_v(t)\|^4_H=&\|u_0\|^4_H-4\int^t_0\|Z^\varepsilon_v\|^2_H\|\partial_1Z^\varepsilon_v(s)\|^2_Hds\\
&+4\int^t_0\|Z^\varepsilon_v(s)\|_H^2\langle\sigma(s,Z^\varepsilon_v(s))v^\varepsilon(s),Z^\varepsilon_v(s)\rangle ds\\
&+2\varepsilon\int^t_0\|Z^\varepsilon_v(s)\|^2_H\|\sigma(s,Z^\varepsilon_v(s))\|^2_{L_2(l^2,H)}ds\\
&+4\varepsilon\int^t_0\|\sigma(s,Z^\varepsilon_v(s))^*(Z^\varepsilon_v)\|^2_{l^2}ds\\
&+4\sqrt{\varepsilon}\int^t_0\|Z^\varepsilon_v(s)\|^2_H\langle Z^\varepsilon_v(s), \sigma(s,Z^\varepsilon_v(s))dW(s)\rangle_H\\
=:&\|u_0\|^4_H-4\int^t_0\|Z^\varepsilon_v\|^2_H\|\partial_1Z^\varepsilon_v(s)\|^2_Hds+I_1+I_2+I_3+I_4.
\endaligned
\end{equation}
By (A1) we have 
\begin{align*}
I_1(t)\leqslant& 4\int^t_0\|Z^\varepsilon_v(s)\|^2_H\|\sigma(s,Z^\varepsilon_v(s))\|_{L_2(l^2,H)}\|v^\varepsilon(s)\|_{l^2}\|Z^\varepsilon_v(s)\|_Hds\\
\leqslant& 2\int^t_0\|Z^\varepsilon_v(s)\|^2_H(K_0+K_1\|Z^\varepsilon_v(s)\|^2_H+K_2\|\partial_1Z^\varepsilon_v(s)\|^2_H+\|v^\varepsilon(s)\|^2_{l^2}\|Z^\varepsilon_v(s)\|^2_H)ds,
\end{align*}
and 
\begin{align*}
I_2+I_3\leqslant &6\varepsilon\int^t_0\|\sigma(s,Z^\varepsilon_v(s))\|_{L_2(l^2,H)}^2\|Z^\varepsilon_v(s)\|_H^2ds\\
\leqslant& 6\varepsilon\int^t_0(K_0+K_1\|Z^\varepsilon_v(s)\|^2_H+K_2\|\partial_1Z^\varepsilon_v(s)\|^2_H)\|Z^\varepsilon_v(s)\|^2_Hds.
\end{align*}
Thus we have
\begin{align*}
&\|Z^\varepsilon_v(t)\|^4_H+(4-2K_2-6\varepsilon K_2)\int^t_0\|Z^\varepsilon_v(s)\|^2_H\|\partial_1Z^\varepsilon_v(s)\|^2_Hds\\
\leqslant& \|u_0\|^4_H+I_4+(2+6\varepsilon)K_0\int^t_0\|Z^\varepsilon_v(s)\|^2_Hds+\int^t_0(2K_1+6\varepsilon K_1+2\|v^\varepsilon(s)\|^2_{l^2})\|Z^\varepsilon_v(s)\|^4_H)ds.
\end{align*}
Since $v^\varepsilon\in\mathcal{A}_N$, by Gronwall's inequality we have
\begin{align*}
&\|Z^\varepsilon_v(t)\|^4_H+(4-2K_2-6\varepsilon K_2)\int^t_0\|Z^\varepsilon_v(s)\|^2_H\|\partial_1Z^\varepsilon_v(s)\|^2_Hds\\
\leqslant &\left(\|u_0\|^4_H+I_4+(2+6\varepsilon)K_0\int^t_0\|Z^\varepsilon_v(s)\|^2_Hds\right)e^{8K_1T+N}.
\end{align*}

The Burkh\"older-Davis-Gundy inequality, the Young's inequality and (A1) imply that
\begin{align*}
E(\sup_{s\in[0,t]}I_4(s))\leqslant& 12\sqrt{\varepsilon} E\left(\int^t_0\|\sigma(s,Z^\varepsilon_v(s))\|^2_{L_2(l^2,H)}\|Z^\varepsilon_v(s)\|^6_Hds\right)^\frac{1}{2}\\
\leqslant& \sqrt{\varepsilon} E(\sup_{s\in[0,t]}\|Z^\varepsilon_v(s)\|^4_H)\\
&+36\sqrt{\varepsilon}E\int^t_0(K_0+K_1\|Z^\varepsilon_v(s)\|^2_H+K_2\|\partial_1Z^\varepsilon_v(s)\|^2_H)\|Z^\varepsilon_v(s)\|^2_Hds.
\end{align*}
Let $\varepsilon$ small enough such that $2K_2+6\varepsilon K_2+36\sqrt{\varepsilon}K_2e^{8K_1T+N}<4$ and $\sqrt{\varepsilon}e^{8K_1T+N}<1$ (for instance $\varepsilon<(\frac{10}{3+18e^{8K_1T+N}})^2$). Then the above estimates and (\ref{eq01 in lemma estimate eq. for weak convergence}) imply that
\begin{align*}
&E(\sup_{s\in[0,t]}\|Z^\varepsilon_v(s)\|^4_H)+\int^t_0 \|Z^\varepsilon_v(s)\|^2_H\|Z^\varepsilon_v(s)\|^2_{\tilde{H}^{1,0}}ds\\
\leqslant &C(N,u_0)+CE\int^t_0\|Z^\varepsilon_v(s)\|^4_Hds,
\end{align*}
which by Gronwall's inequality yields that 
\begin{align*}
E(\sup_{s\in[0,t]}\|Z^\varepsilon_v(s)\|^4_H)+\int^t_0 \|Z^\varepsilon_v(s)\|^2_H\|Z^\varepsilon_v(s)\|^2_{\tilde{H}^{1,0}}ds\leqslant C(N,u_0).
\end{align*}

For (\ref{eq02 in lemma estimate eq. for weak convergence}), let $h(t)=kg(t)+\int^t_0\|v^\varepsilon(s)\|^2_{l^2}ds$ for some universal constant $k$. Applying It\^o's formula to $e^{-h(t)}\|Z^\varepsilon_v(t)\|^2_{\tilde{H}^{0,1}}$, we have
\begin{align*}
&e^{-h(t)}\|Z^\varepsilon_v(t)\|^2_{\tilde{H}^{0,1}}+2\int^t_0e^{-h(s)}(\|\partial_1 Z^\varepsilon_v(s)\|^2_H+\|\partial_1\partial_2 Z^\varepsilon_v(s)\|^2_H)ds\\
=&\|u_0\|^2_{\tilde{H}^{0,1}}-\int^t_0e^{-h(s)}(k\|\partial_1Z^\varepsilon_v(s)\|^2_H+\|v^\varepsilon(s)\|^2_{l^2})\|Z^\varepsilon_v(s)\|^2_{\tilde{H}^{0,1}}ds\\
&+2\int^t_0e^{-h(s)}\langle \partial_2 Z^\varepsilon_v(s), \partial_2(Z^\varepsilon_v\cdot \nabla Z^\varepsilon_v)(s)\rangle ds+2\int^t_0e^{-h(s)}\langle Z^\varepsilon_v(s), \sigma(s,Z^\varepsilon_v(s))v^\varepsilon(s)\rangle_{\tilde{H}^{0,1}}ds\\
&+2\sqrt{\varepsilon}\int^t_0 e^{-h(s)}\langle Z^\varepsilon_v(s), \sigma(s,Z^\varepsilon_v(s))dW(t)\rangle_{\tilde{H}^{0,1}}+\varepsilon\int^t_0e^{-h(s)}\|\sigma(s,Z^\varepsilon_v(s))\|^2_{L_2(l^2, \tilde{H}^{0,1})}ds.
\end{align*}
By Lemma \ref{estimate for b with partial_2},  there exists a constant $C_1$ such that
$$|\langle \partial_2 Z^\varepsilon_v, \partial_2(Z^\varepsilon_v\cdot \nabla Z^\varepsilon_v)\rangle|\leqslant \frac{1}{2}\|\partial_1\partial_2Z^\varepsilon_v\|^2_H+C_1(1+\|\partial_1Z^\varepsilon_v\|^2_H)\|\partial_2Z^\varepsilon_v\|^2_H.$$
By Young's inequality, 
$$2|\langle Z^\varepsilon_v(s), \sigma(s,Z^\varepsilon_v(s))v^\varepsilon(s)\rangle_{\tilde{H}^{0,1}}|\leqslant \|Z^\varepsilon_v\|^2_{\tilde{H}^{0,1}}\|v^\varepsilon\|^2_{l^2}+\|\sigma(s,Z^\varepsilon_v)\|^2_{L_2(l^2,\tilde{H}^{0,1})}.$$
Choosing $k>2C_1$, we have
\begin{align*}
&e^{-h(t)}\|Z^\varepsilon_v(t)\|^2_{\tilde{H}^{0,1}}+\int^t_0e^{-h(s)}(\|\partial_1 Z^\varepsilon_v(s)\|^2_H+\|\partial_1\partial_2 Z^\varepsilon_v(s)\|^2_H)ds\\
\leqslant&\|u_0\|^2_{\tilde{H}^{0,1}}+C\int^t_0e^{-h(s)}\|\partial_2Z^\varepsilon_v(s)\|^2_Hds+(1+\varepsilon)\int^t_0e^{-h(s)}\|\sigma(s,Z^\varepsilon_v(s))\|^2_{L_2(l^2, \tilde{H}^{0,1})}ds\\
&+2\sqrt{\varepsilon}\int^t_0 e^{-h(s)}\langle Z^\varepsilon_v(s), \sigma(s,Z^\varepsilon_v(s))dW(t)\rangle_{\tilde{H}^{0,1}}.
\end{align*}
By the Burkh\"older-Davis-Gundy inequality we have
\begin{align*}
&2\sqrt{\varepsilon} E\left(\sup_{s\in[0,t]}|\int^s_0e^{-h(r)}\langle Z^\varepsilon_v(r), \sigma(r,Z^\varepsilon_v(r))dW(r)\rangle_{\tilde{H}^{0,1}}|\right)\\
\leqslant &6\sqrt{\varepsilon} E\left(\int^{t}_0e^{-2h(s)}\|Z^\varepsilon_v(r)\|^2_{\tilde{H}^{0,1}}\|\sigma(r,Z^\varepsilon_v(r))\|^2_{L_2(l^2,\tilde{H}^{0,1})}ds\right)^\frac{1}{2}\\
\leqslant &\sqrt{\varepsilon} E[\sup_{s\in[0,t]}(e^{-h(s)}\|Z^\varepsilon_v(s)\|^2_{\tilde{H}^{0,1}})]\\
&+9\sqrt{\varepsilon} E\int^{t}_0e^{-h(s)}[\tilde{K_0}+\tilde{K_1}\|Z^\varepsilon_v(s)\|^2_{\tilde{H}^{0,1}}+\tilde{K_2}(\|\partial_1 Z^\varepsilon_v(s)\|^2_H+\|\partial_1\partial_2 Z^\varepsilon_v(s)\|^2_H)]ds,
\end{align*}
where  $(9\sqrt{\varepsilon}+1+\varepsilon)\tilde{K_2}-1<0$ (this can be done if $\varepsilon<\frac{9}{400}$) and we used (A2) in the last inequality.

Combine the above estimates, we have
\begin{align*}
&E(\sup_{s\in[0,{t}]}e^{-h(s)}\|Z^\varepsilon_v(s)\|^2_{\tilde{H}^{0,1}})+E\int^{t}_0e^{-h(s)}\|Z^\varepsilon_v(s)\|^2_{\tilde{H}^{1,1}}ds\\
\leqslant &C(\|u_0\|^2_{\tilde{H}^{0,1}}+1+E\int^{t}_0e^{-h(s)}\|Z^\varepsilon_v(s)\|^2_{\tilde{H}^{0,1}}ds)
\end{align*} 

Then Gronwall's inequality implies that 
\begin{align*}
E(\sup_{0\leqslant t\leqslant T}e^{-h(t)}\|Z^\varepsilon_v(t)\|^2_{\tilde{H}^{0,1}})+E\int^{T}_0e^{-h(s)}\|Z^\varepsilon_v(s)\|^2_{\tilde{H}^{1,1}}ds\leqslant C(1+\|u_0\|^2_{\tilde{H}^{0,1}}).
\end{align*}
Since $v^\varepsilon\in \mathcal{S}_N$, we deduce that
\begin{equation}\label{eq2 in lemma estimate eq. for weak convergence}
E(\sup_{t\in[0,T]}e^{-kg(t)}\|Z^\varepsilon_v(t)\|^2_{\tilde{H}^{0,1}})+E\int^{T}_0e^{-kg(s)}\|Z^\varepsilon_v(s)\|^2_{\tilde{H}^{1,1}}ds\leqslant C(1+\|u_0\|^2_{\tilde{H}^{0,1}})e^{N}.
\end{equation}

\end{proof}

Similar as \cite[lemma 4.3]{LZZ18}, we have the following tightness lemma:
\begin{lemma}\label{tightness lemma}
Assume $Z^\varepsilon_v$ is a solution to (\ref{eq. for weak convergence}) with $v^\varepsilon\in\mathcal{A}_N$ and $\varepsilon<1$ small enough. 
There exists $\varepsilon_0>0$, such that
$\{Z^\varepsilon_v\}_{\varepsilon\in(0,\varepsilon_0)}$ is tight in the space
$$\chi=C([0,T],H^{-1})\bigcap L^2([0,T],H)\bigcap L^2_w([0,T], H^{1,1})\bigcap L^\infty_{w^*}([0,T], H^{0,1}),$$
where $L^2_w$ denotes the weak topology and $L^\infty_{w^*}$ denotes the weak star topology.
\end{lemma}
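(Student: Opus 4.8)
The plan is to verify tightness through the standard criterion: for every $\eta>0$ exhibit a compact set $K_\eta\subset\chi$ with $P(Z^\varepsilon_v\in K_\eta)\geqslant 1-\eta$ for all $\varepsilon\in(0,\varepsilon_0)$, with $\varepsilon_0$ as in Lemma \ref{estimate eq. for weak convergence}. As in \cite[Lemma 4.3]{LZZ18} this suffices, since all four topologies in $\chi$ are metrizable on norm-bounded sets and a sequence converging in each factor has a single limit, identified inside $\mathcal{D}'([0,T]\times\mathbb{T}^2)$. The set $K_\eta$ will be defined by three norm bounds extracted from Lemma \ref{estimate eq. for weak convergence} plus a time-regularity bound in $H^{-1}$.

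\textbf{Step 1 (a priori bounds, removal of the random weight).} First I would record, uniformly in $\varepsilon$, the bound (\ref{eq01 in lemma estimate eq. for weak convergence}) and the weighted bound (\ref{eq02 in lemma estimate eq. for weak convergence}), with $g(t)=\int_0^t\|Z^\varepsilon_v(s)\|^2_Hds$. Since $g(T)\leqslant T\sup_{[0,T]}\|Z^\varepsilon_v\|^2_H$, (\ref{eq01 in lemma estimate eq. for weak convergence}) gives $Eg(T)\leqslant C(N,u_0)$, hence $P(g(T)>M)\leqslant C(N,u_0)/M$. On $\Omega^M_\varepsilon:=\{g(T)\leqslant M\}$ one has $e^{-kg(s)}\geqslant e^{-kM}$, so (\ref{eq02 in lemma estimate eq. for weak convergence}) yields $E[\mathbf{1}_{\Omega^M_\varepsilon}(\sup_{[0,T]}\|Z^\varepsilon_v\|^2_{\tilde{H}^{0,1}}+\int_0^T\|Z^\varepsilon_v(s)\|^2_{\tilde{H}^{1,1}}ds)]\leqslant e^{kM}C(N,u_0)$. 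I expect this localization to be the crux: since $\tilde{H}^{0,1}$ is \emph{not} compactly embedded in $H$ whereas $\tilde{H}^{1,1}$ is, the $\tilde{H}^{1,1}$ bound is indispensable for strong $L^2([0,T],H)$-compactness, yet it is only available on $\Omega^M_\varepsilon$ and with an $e^{kM}$ loss — the very difficulty that does not arise in the isotropic case.

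\textbf{Step 2 (time regularity in $H^{-1}$).} I would decompose $Z^\varepsilon_v(t)=u_0+J_1(t)+J_2(t)+J_3(t)+J_4(t)$ with $J_1=\int_0^t\partial_1^2Z^\varepsilon_vds$, $J_2=-\int_0^tB(Z^\varepsilon_v)ds$, $J_3=\int_0^t\sigma(s,Z^\varepsilon_v)v^\varepsilon ds$, $J_4=\sqrt{\varepsilon}\int_0^t\sigma(s,Z^\varepsilon_v)dW$. Using $\|\partial_1^2u\|_{H^{-1}}\leqslant\|\partial_1u\|_H$, $\|B(u)\|_{H^{-1}}\leqslant C\|u\|^2_{L^4}\leqslant C\|u\|_H\|u\|_{\tilde{H}^{1,1}}$, and (A0) with H\"older in time and $v^\varepsilon\in S_N$, one gets on $\Omega^M_\varepsilon$ that $\partial_t(J_1+J_2)\in L^2([0,T],H^{-1})$ and $\partial_tJ_3\in L^{4/3}([0,T],H^{-1})$, with norms bounded pathwise by $\sup_{[0,T]}\|Z^\varepsilon_v\|^4_H$, $\int_0^T\|\partial_1Z^\varepsilon_v\|^2_Hds$ and $\int_0^T\|Z^\varepsilon_v\|^2_{\tilde{H}^{1,1}}ds$; hence $u_0+J_1+J_2+J_3\in W^{1,4/3}([0,T],H^{-1})\hookrightarrow C^{1/4}([0,T],H^{-1})$. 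For $J_4$, Lemma \ref{martingale lemma} with $p=4$ and (A0) give $E\|J_4(t)-J_4(s)\|^4_{H^{-1}}\leqslant c\,\varepsilon^2|t-s|^2E(1+\sup_{[0,T]}\|Z^\varepsilon_v\|^4_H)\leqslant C(N,u_0)|t-s|^2$, so Kolmogorov's continuity criterion gives $E\|J_4\|^4_{C^\gamma([0,T],H^{-1})}\leqslant C(N,u_0)$ for any $\gamma<1/4$. Thus, fixing $\gamma\in(0,1/4)$, $\|Z^\varepsilon_v\|_{C^\gamma([0,T],H^{-1})}$ is controlled on $\Omega^M_\varepsilon$ by $\|J_4\|_{C^\gamma}$ and the quantities of Step 1. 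I would stress that using Lemma \ref{martingale lemma}/Kolmogorov here (rather than a fractional Sobolev $W^{\alpha,2}$ estimate) is essential to get genuine continuity in $H^{-1}$, i.e. tightness in $C([0,T],H^{-1})$ and not merely in $C([0,T],H^{-1-\delta})$.

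\textbf{Step 3 (assembling $K_\eta$, compactness).} Given $\eta>0$, I would fix $M$ with $C(N,u_0)/M<\eta/6$, and then by Chebyshev choose $R$ so large that each of the $\Omega^M_\varepsilon$-truncated quantities $\sup_{[0,T]}\|Z^\varepsilon_v\|_{\tilde{H}^{0,1}}$, $\int_0^T\|Z^\varepsilon_v\|^2_{\tilde{H}^{1,1}}ds$, $\sup_{[0,T]}\|Z^\varepsilon_v\|^4_H$, $\int_0^T\|\partial_1Z^\varepsilon_v\|^2_Hds$ and $\|J_4\|_{C^\gamma([0,T],H^{-1})}$ exceeds $R$ with probability $<\eta/6$. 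On the complement of all these events together with $\Omega^M_\varepsilon$ (total probability $\geqslant1-\eta$) the Step 2 bounds become pathwise, so $\|Z^\varepsilon_v\|_{C^\gamma([0,T],H^{-1})}\leqslant R'$ for some $R'=R'(\eta,N,u_0)$, whence with
$$K_\eta:=\big\{z\in\chi:\ \|z\|_{L^\infty([0,T],\tilde{H}^{0,1})}\leqslant R,\ \textstyle\int_0^T\|z\|^2_{\tilde{H}^{1,1}}ds\leqslant R,\ \|z\|_{C^\gamma([0,T],H^{-1})}\leqslant R'\big\}$$
one has $P(Z^\varepsilon_v\in K_\eta)\geqslant1-\eta$. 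Finally $K_\eta$ is compact in $\chi$: it is closed (each inequality is lower semicontinuous for the relevant topology); norm balls of $L^2([0,T],\tilde{H}^{1,1})$ and of $L^\infty([0,T],\tilde{H}^{0,1})$ are weakly, resp. weak-$*$, sequentially compact and metrizable, giving compactness in the last two factors; the $C^\gamma$ bound and the compact embedding $\tilde{H}^{0,1}\hookrightarrow\hookrightarrow H^{-1}$ give compactness in $C([0,T],H^{-1})$ by Arzel\`a--Ascoli; and the $L^2([0,T],\tilde{H}^{1,1})$ bound together with equicontinuity in $C([0,T],H^{-1})$ and the compact embedding $\tilde{H}^{1,1}\hookrightarrow\hookrightarrow H$ give compactness in $L^2([0,T],H)$ by the Aubin--Lions lemma, \cite[Lemma 3.6]{LZZ18}. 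I expect the main obstacle to be exactly Step 1: squeezing uniform-in-$\varepsilon$ estimates out of the random exponential weight, and the fact that only the $\tilde{H}^{1,1}$-regularity — not the $H$- or $\tilde{H}^{0,1}$-energy estimate — delivers the compactness needed for $L^2([0,T],H)$-tightness.
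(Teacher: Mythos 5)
Your argument is correct, and its skeleton coincides with the paper's: moment bounds from Lemma \ref{estimate eq. for weak convergence} plus Chebyshev, a pathwise H\"older-in-time bound in $H^{-1}$ obtained by splitting $Z^\varepsilon_v$ into the drift parts (bounded pathwise on a good event, using $\|\partial_1^2 u\|_{H^{-1}}\leqslant\|\partial_1 u\|_H$, $\|B(u)\|_{H^{-1}}\leqslant C\|u\|_H\|u\|_{\tilde H^{1,1}}$ and (A0)) and the stochastic integral (handled by a fourth-moment BDG estimate and Kolmogorov's criterion, exactly as in the paper), and finally a compact set built from these bounds. The one genuine difference is how the exponentially weighted estimate (\ref{eq02 in lemma estimate eq. for weak convergence}) is exploited: the paper keeps the random weight $e^{-k\int_0^t\|\partial_1 u\|^2_H ds}$ inside the definition of the candidate compact set $K_R$ and invokes \cite[Lemma 4.3]{LZZ18} for the relative compactness of such weighted sets in $\chi$, effectively bounding the weight from below on $\hat K_{R_0}$ because $\int_0^T\|\partial_1 u\|^2_H ds$ is already controlled there; you instead strip the weight beforehand by localizing on $\{g(T)\leqslant M\}$ (legitimate, since (\ref{eq01 in lemma estimate eq. for weak convergence}) gives $Eg(T)\leqslant C(N,u_0)$), accept the $e^{kM}$ loss, and then work with a compact set defined by plain norm balls in $L^\infty([0,T],\tilde H^{0,1})$, $L^2([0,T],\tilde H^{1,1})$ and $C^\gamma([0,T],H^{-1})$, whose compactness you verify directly via Banach--Alaoglu, Arzel\`a--Ascoli and an Aubin--Lions/Simon argument. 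Both routes are sound: yours is more self-contained (no appeal to the weighted compactness lemma of \cite{LZZ18}) and yields a cleaner compact set, at the cost of the extra localization parameter $M$ and the $e^{kM}$ blow-up in the Chebyshev step; the paper's route avoids that loss by letting the set itself carry the weight. The discrepancy in the H\"older exponent ($\gamma<1/4$ versus the paper's $1/16$) is immaterial.
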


\begin{proof}
Let $k$ be the same constant as in the proof of (\ref{eq02 in lemma estimate eq. for weak convergence}) and let
\begin{align*}
K_R:=&\Big{\{} u\in C([0,T],H^{-1}): \sup_{t\in[0,T]}\|u(t)\|^2_H+\int^T_0\|u(t)\|^2_{\tilde{H}^{1,0}}dt+\|u\|_{C^\frac{1}{16}([0,T], H^{-1})}\\
&+\sup_{t\in[0,T]}e^{-k\int^t_0\|\partial_1u(s)\|^2_Hds}\|u(t)\|^2_{\tilde{H}^{0,1}}+\int^T_0e^{-k\int^t_0\|\partial_1u(s)\|^2_Hds}\|u(t)\|^2_{\tilde{H}^{1,1}}dt\leqslant R\Big{\}},
\end{align*}
where $C^\frac{1}{16}([0,T], H^{-1})$ is the H\"older space with the norm:
$$\|f\|_{C^\frac{1}{16}([0,T], H^{-1})}=\sup_{0\leqslant s< t\leqslant T}\frac{\|f(t)-f(s)\|_{H^{-1}}}{|t-s|^{\frac{1}{16}}}.$$

Then from the proof of \cite[Lemma 4.3]{LZZ18}, we know that for any $R>0$, $K_R$ is relatively compact in $\chi$. 

Now we only need to show that  for any $\delta>0$, there exists $R>0$, such that $P(Z^\varepsilon_v\in  K_R)>1-\delta$ for any $\varepsilon\in(0,\varepsilon_0)$, where $\varepsilon_0$ is the constant such that Lemma \ref{estimate eq. for weak convergence} hold.

By Lemma \ref{estimate eq. for weak convergence} and Chebyshev inequality, we can choose $R_0$ large enough such that
\begin{align*}
P\left(\sup_{t\in[0,T]}\|Z^\varepsilon_v(t)\|^2_H+\int^T_0\|Z^\varepsilon_v(t)\|^2_{\tilde{H}^{1,0}}dt>\frac{R_0}{3}\right)<\frac{\delta}{4},
\end{align*}
and
\begin{align*}
P\left(\sup_{t\in[0,T]}e^{-k\int^t_0\|\partial_1u(s)\|^2_Hds}\|u(t)\|^2_{\tilde{H}^{0,1}}+\int^T_0e^{-k\int^t_0\|\partial_1u(s)\|^2_Hds}\|u(t)\|^2_{\tilde{H}^{1,1}}dt>\frac{R_0}{3}\right)<\frac{\delta}{4},
\end{align*}
where $k$ is the same constant as  in (\ref{eq02 in lemma estimate eq. for weak convergence}).

 Fix $R_0$ and let
 \begin{align*}
 \hat{K}_{R_0}=&\Big{\{}u\in C([0,T],H^{-1}): \sup_{t\in[0,T]}\|u(t)\|^2_H+\int^T_0\|u(t)\|^2_{\tilde{H}^{1,0}}dt\leqslant \frac{R_0}{3}\text{ and }\\
&\sup_{t\in[0,T]}e^{-k\int^t_0\|\partial_1u(s)\|^2_Hds}\|u(t)\|^2_{\tilde{H}^{0,1}}+\int^T_0e^{-k\int^t_0\|\partial_1u(s)\|^2_Hds}\|u(t)\|^2_{\tilde{H}^{1,1}}dt\leqslant \frac{R_0}{3}\Big{\}}.
 \end{align*}
Then $P(Z^\varepsilon_v\in C([0,T], H^{-1})\setminus \hat{K}_{R_0})<\frac{\delta}{2}$.

 Now for  $Z^\varepsilon_v\in\hat{K}_{R_0}$, we have $\partial_1^2Z^\varepsilon_v$ is uniformly bounded in $L^2([0,T],H^{-1})$. Similar as in Lemma \ref{solution to skeleton eq.}, $Z^\varepsilon_v$ is uniformly bounded in $L^4([0,T],H^\frac{1}{2})$ and $L^4([0,T], L^4(\mathbb{T}^2))$, thus $B(Z^\varepsilon_v)$ is uniformly bounded in $L^2([0,T],H^{-1})$.  By H\"older's inequality, we have
 \begin{align*}
 \sup_{s,t\in[0,T],s\neq t}\frac{\|\int^t_s\partial_1^2Z^\varepsilon_v(r)+B(Z^\varepsilon_v(r))dr\|^2_{H^{-1}}}{|t-s|}\leqslant \int^T_0\|\partial_1^2Z^\varepsilon_v(r)+B(Z^\varepsilon_v(r))\|^2_{H^{-1}}dr\leqslant C(R_0),
 \end{align*}
 where $C(R_0)$ is a constant depend on $R_0$.  For any $p\in(1,\frac{4}{3})$, by H\"older's inequality, we have
\begin{align*}
\sup_{s,t\in[0,T],s\neq t}\frac{\|\int^t_s\sigma(r,Z^\varepsilon_v(r))v^\varepsilon(r) dr\|^p_{H^{-1}}}{|t-s|^{p-1}}\leqslant& \int^T_0\|\sigma(r,Z^\varepsilon_v(r))v^\varepsilon(r)\|^p_{H^{-1}}dr\\
\leqslant& \int^T_0 \|\sigma(r,Z^\varepsilon_v(r))\|^p_{L_2(l^2,H^{-1})}\|v^\varepsilon(r)\|^p_{l^2}dr\\
\leqslant& C\int^T_0(1+\|Z^\varepsilon_v(r)\|^4_{H}+\|v^\varepsilon(r)\|^4_{l^2})dr\\
\leqslant& C(R_0),
\end{align*}
where we used Young's inequality and (A0) in the third inequality.

Moreover, for any $0\leqslant s\leqslant t\leqslant T$, by H\"older's inequality we have
\begin{align*}
E\|\int^t_s\sigma(r,Z^\varepsilon_v(r))dW(r)\|^4_{H^{-1}}\leqslant& C E\left(\int^t_s \|\sigma(r,Z^\varepsilon_v(r))\|^2_{L_2(l^2,H^{-1})}dr \right)^2\\
\leqslant& C|t-s|E\int^t_s \|\sigma(r,Z^\varepsilon_v(r))\|^4_{L_2(l^2,H^{-1})}dr\\
\leqslant& C|t-s|^2(1+E(\sup_{t\in[0,T]}\|Z^\varepsilon_v(t)\|^4_H))\\
\leqslant &C|t-s|^2,
\end{align*}
where we used (A0) in the third inequality and (\ref{eq01 in lemma estimate eq. for weak convergence}) in the last inequality. Then by Kolmogorov's continuity criterion, for any $\alpha\in(0, \frac{1}{4})$, we have
\begin{align*}
E\left(\sup_{s,t\in[0,T],s\neq t}\frac{\|\int^t_s\sigma(r,Z^\varepsilon_v(r))dW(r)\|^4_{H^{-1}}}{|t-s|^{2\alpha}}\right)\leqslant C.
\end{align*}
Choose $p=\frac{8}{7}, \alpha=\frac{1}{8}$ in the above estimates, we deduce that there exists $R>R_0$ such that 
\begin{align*}
P\left(\|Z^\varepsilon_v\|_{C^\frac{1}{16}([0,T],H^{-1})}>\frac{R}{3}, Z^\varepsilon_v\in \hat{K}_{R_0}\right)\leqslant \frac{E\left(\sup_{s,t\in[0,T],s\neq t}\frac{\|Z^\varepsilon_v(t)-Z^\varepsilon_v(s)\|_{H^{-1}}}{|t-s|^{\frac{1}{16}}}1_{\{Z^\varepsilon_v\in \hat{K}_{R_0}\}}\right)}{\frac{R}{3}}< \frac{\delta}{2}.
\end{align*}
Combining the fact that  $P(Z^\varepsilon_v\in C([0,T], H^{-1})\setminus \hat{K}_{R_0})<\frac{\delta}{2}$, we finish the proof.

\end{proof}

\begin{lemma}\label{weak convergence}
Assume (A0)-(A3) hold with $L_2=0$. Let $\{v^\varepsilon\}_{\varepsilon>0}\subset \mathcal{A}_N$ for some $N<\infty$. Assume $v^\varepsilon$ converge to $v$ in distribution as $S_N$-valued random elements, then
\begin{align*}
g^\varepsilon\left(W(\cdot)+\frac{1}{\sqrt{\varepsilon}}\int^\cdot_0v^\varepsilon(s)ds\right)\rightarrow g^0\left(\int^\cdot_0v(s)ds\right)
\end{align*}
in distribution as $\varepsilon\rightarrow 0$.
\end{lemma}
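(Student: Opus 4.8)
The plan is to realise $Z^\varepsilon_v:=g^\varepsilon\big(W(\cdot)+\tfrac{1}{\sqrt\varepsilon}\int_0^\cdot v^\varepsilon(s)\,ds\big)$ as the solution of \eqref{eq. for weak convergence} (Lemma \ref{Girsanov thm to prove existence}) and to split
$$Z^\varepsilon_v=\big(Z^\varepsilon_v-z^{v^\varepsilon}\big)+z^{v^\varepsilon},$$
where $z^{v^\varepsilon}$ denotes the pathwise solution of the skeleton equation \eqref{skeleton eq.} driven by the random control $v^\varepsilon$; since $v^\varepsilon(\omega)\in S_N\subset L^2([0,T],l^2)$, Lemma \ref{solution to skeleton eq.} guarantees $z^{v^\varepsilon}\in L^\infty([0,T],\tilde H^{0,1})\cap L^2([0,T],\tilde H^{1,1})\cap C([0,T],H^{-1})$ $P$-a.s. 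Writing $E:=L^\infty([0,T],H)\cap L^2([0,T],\tilde H^{1,0})\cap C([0,T],H^{-1})$, it then suffices to show \textbf{(i)} $z^{v^\varepsilon}\to z^{v}$ in distribution on $E$ and \textbf{(ii)} the $E$-distance between $Z^\varepsilon_v$ and $z^{v^\varepsilon}$ tends to $0$ in probability; then $Z^\varepsilon_v\to z^{v}=g^0\big(\int_0^\cdot v(s)\,ds\big)$ in distribution by a standard converging-together argument. (Equivalently one may combine Lemma \ref{tightness lemma} with Skorokhod's representation theorem to pass to a.s.-convergent copies on a new probability space and run the same estimate there; the decisive step is \textbf{(ii)} in either case.)

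For \textbf{(i)}, the proof of Lemma \ref{good rate function} shows that $\phi\mapsto z^{\phi}$ is sequentially continuous from $S_N$, with the weak topology, into $E$: if $\phi_n\rightharpoonup\phi$ in $S_N$, the a priori bounds \eqref{a priori z1}--\eqref{a priori z2} and the passage to the limit carried out there identify every subsequential limit of $z^{\phi_n}$ with $z^{\phi}$. Since $S_N$ is metrizable in the weak topology, $\phi\mapsto z^{\phi}$ is continuous, so $v^\varepsilon\to v$ in distribution on $S_N$ implies, by the continuous mapping theorem, $z^{v^\varepsilon}\to z^{v}$ in distribution on $E$.

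For \textbf{(ii)}, set $\rho^\varepsilon:=Z^\varepsilon_v-z^{v^\varepsilon}$, so $\rho^\varepsilon(0)=0$ and
$$d\rho^\varepsilon=\Big(\partial_1^2\rho^\varepsilon-B(Z^\varepsilon_v)+B(z^{v^\varepsilon})+\big(\sigma(s,Z^\varepsilon_v)-\sigma(s,z^{v^\varepsilon})\big)v^\varepsilon\Big)\,ds+\sqrt\varepsilon\,\sigma(s,Z^\varepsilon_v)\,dW.$$
I would apply It\^o's formula to $e^{-q^\varepsilon(t)}\|\rho^\varepsilon(t)\|_H^2$ with $q^\varepsilon(t)=k\int_0^t\big(1+\|z^{v^\varepsilon}(s)\|^2_{\tilde H^{1,1}}+\|v^\varepsilon(s)\|_{l^2}\big)\,ds$ (legitimate since $Z^\varepsilon_v,z^{v^\varepsilon}\in L^2([0,T],\tilde H^{1,1})\cap L^\infty([0,T],\tilde H^{0,1})$ a.s., exactly as in the uniqueness part of Lemma \ref{Girsanov thm to prove existence}), using the cancellation $\langle-B(Z^\varepsilon_v)+B(z^{v^\varepsilon}),\rho^\varepsilon\rangle=-b(\rho^\varepsilon,z^{v^\varepsilon},\rho^\varepsilon)$, the anisotropic estimate Lemma \ref{anisotropic estimate for b} $\big(|b(\rho^\varepsilon,z^{v^\varepsilon},\rho^\varepsilon)|\le\tilde\alpha\|\partial_1\rho^\varepsilon\|_H^2+\tilde C(1+\|z^{v^\varepsilon}\|^2_{\tilde H^{1,1}})\|\rho^\varepsilon\|_H^2,\ \tilde\alpha<1\big)$, and (A3) with $L_2=0$ for the diffusion difference $\|\sigma(s,Z^\varepsilon_v)-\sigma(s,z^{v^\varepsilon})\|^2_{L_2(l^2,H)}\le L_1\|\rho^\varepsilon\|_H^2$. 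Choosing $k$ large, the drift terms proportional to $\|\rho^\varepsilon\|_H^2$ are absorbed by the non-positive contribution $-k(1+\|z^{v^\varepsilon}\|^2_{\tilde H^{1,1}}+\|v^\varepsilon\|_{l^2})e^{-q^\varepsilon}\|\rho^\varepsilon\|_H^2$ coming from the weight; the It\^o-correction term $\varepsilon\int_0^\cdot e^{-q^\varepsilon}\|\sigma(s,Z^\varepsilon_v)\|^2_{L_2(l^2,H)}\,ds$ and, after Burkholder--Davis--Gundy, the stochastic term are controlled by (A1) together with the uniform bound $E\int_0^T\|\partial_1 Z^\varepsilon_v\|_H^2\,ds\le C_{N,u_0}$ of Lemma \ref{estimate eq. for weak convergence}; for $\varepsilon$ small (accommodating the $K_2$- and BDG-constants, as there) this yields
$$E\Big[\sup_{t\le T}e^{-q^\varepsilon(t)}\|\rho^\varepsilon(t)\|_H^2+\int_0^T e^{-q^\varepsilon(s)}\|\partial_1\rho^\varepsilon(s)\|_H^2\,ds\Big]\le C_{N,u_0}\,\varepsilon.$$
Since $q^\varepsilon(T)$ is bounded by a deterministic constant (by \eqref{a priori z2} and $v^\varepsilon\in S_N$), the weight drops out, and using $\|\cdot\|_{H^{-1}}\le\|\cdot\|_H$ and $\|\rho^\varepsilon\|^2_{L^2([0,T],\tilde H^{1,0})}\le T\sup_{t\le T}\|\rho^\varepsilon(t)\|_H^2+\int_0^T\|\partial_1\rho^\varepsilon\|_H^2\,ds$ we conclude $\rho^\varepsilon\to0$ in probability on $E$.

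The main obstacle is this energy estimate. Because the viscosity acts only in the $x_1$-direction, the trilinear term $b(\rho^\varepsilon,z^{v^\varepsilon},\rho^\varepsilon)$ cannot be closed by $H^1$-coercivity; the anisotropic interpolation forces $\|z^{v^\varepsilon}\|_{\tilde H^{1,1}}^2$ into the Gronwall exponent, and controlling it uniformly in $\varepsilon$ is precisely what the $H^{0,1}$ a priori bound \eqref{a priori z2}—not available from $L^2$-estimates alone—supplies. The multiplicative weight $e^{-q^\varepsilon}$ is used in place of a plain Gronwall argument so that the $\sqrt\varepsilon$-martingale term can be absorbed directly (without an auxiliary stopping time). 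Finally, it is essential to compare $Z^\varepsilon_v$ with $z^{v^\varepsilon}$ rather than with $z^{v}$, so that the diffusion difference is governed by the Lipschitz condition (A3); the passage from $z^{v^\varepsilon}$ to $z^{v}$ is then carried entirely by the continuity of $\phi\mapsto z^\phi$ established in Step (i).
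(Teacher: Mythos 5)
Your argument is correct, but it follows a genuinely different route from the paper. The paper never estimates $Z^\varepsilon_v-z^{v^\varepsilon}$ directly: it introduces the auxiliary linear process $Y^\varepsilon$ solving $dY^\varepsilon=\partial_1^2Y^\varepsilon dt+\sqrt{\varepsilon}\,\sigma(t,Z^\varepsilon_v)dW$ and shows $Y^\varepsilon\to0$ in mean, invokes the tightness of $(Z^\varepsilon_v,v^\varepsilon)$ in $\chi\times S_N$ (Lemma \ref{tightness lemma}) and the Skorokhod representation theorem to pass to almost surely convergent copies, and then runs a purely pathwise comparison of $\tilde Z^\varepsilon_v-\tilde Y^\varepsilon$ with $z^{\tilde v}$, re-using the deterministic argument of Lemma \ref{good rate function} to identify the limit and to upgrade to strong convergence in $L^\infty([0,T],H)\cap L^2([0,T],\tilde H^{1,0})$. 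You instead compare $Z^\varepsilon_v$ with the skeleton solution $z^{v^\varepsilon}$ driven by the \emph{same} random control, close the weighted It\^o/BDG energy estimate in expectation (which works because the exponent $q^\varepsilon(T)$ is deterministically bounded via \eqref{a priori z1}--\eqref{a priori z2} for controls in $S_N$, and the It\^o correction and martingale terms are $O(\varepsilon)$ thanks to \eqref{eq01 in lemma estimate eq. for weak convergence}), and then transport $z^{v^\varepsilon}\Rightarrow z^{v}$ through the continuity of $\phi\mapsto z^\phi$ from $(S_N,\mathrm{weak})$ to $E$ together with the converging-together theorem. What your route buys is a shorter proof with a quantitative rate ($E[\sup_t\|\rho^\varepsilon\|_H^2+\int_0^T\|\partial_1\rho^\varepsilon\|_H^2\,ds]\le C(N,u_0)\varepsilon$) that dispenses with Lemma \ref{tightness lemma} and Skorokhod entirely; what the paper's route buys is that it never applies It\^o's formula to the nonlinear stochastic difference, only to the linear part $Y^\varepsilon$, with all nonlinear comparisons done pathwise. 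Two small points you should make explicit: the adaptedness/Borel measurability of $\omega\mapsto z^{v^\varepsilon(\omega)}$ (needed for the stochastic integral and for $z^{v^\varepsilon}$ to be an $E$-valued random element), which follows from uniqueness/causality of the skeleton equation and the continuity of $\phi\mapsto z^\phi$ you already use; and the applicability of It\^o's formula to the difference of a stochastic solution and a skeleton solution, which is of the same nature as the computation in the uniqueness part of Lemma \ref{Girsanov thm to prove existence} and so is consistent with the paper's framework.
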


\begin{proof}
The proof follows essentially the same argument as in \cite[Proposition 4.7]{WZZ}. 
%%Let $$\Lambda=L^\infty([0,T],\tilde{H}^{0,1})\bigcap L^2([0,T], \tilde{H}^{1,1})\bigcap C([0,T],H^{-1}).$$

By Lemma \ref{Girsanov thm to prove existence}, we have $Z_v^\varepsilon=g^\varepsilon\left(W(\cdot)+\frac{1}{\sqrt{\varepsilon}}\int^\cdot_0v^\varepsilon(s)ds\right)$. By a similar but simple argument as in the proof of Lemmas \ref{solution to skeleton eq.} and \ref{estimate eq. for weak convergence}, there exists a unique strong solution $Y^\varepsilon\in L^{\infty}([0,T],\tilde{H}^{0,1})\bigcap L^2([0,T],\tilde{H}^{1,1})\bigcap C([0,T],H^{-1})$ satisfying
\begin{align*}
dY^\varepsilon(t)=&\partial_1^2Y^\varepsilon(t)dt+\sqrt{\varepsilon}\sigma(t,Z^\varepsilon_v(t))dW(t),\\
\text{div }Y^\varepsilon =&0,\\
Y^\varepsilon(0)=&0,
\end{align*}
and 
\begin{align*}
\lim_{\varepsilon\rightarrow 0}\left[E\sup_{t\in[0,T]}\|Y^\varepsilon(t)\|_{H}^2+E\int^T_0\|Y^\varepsilon(t)\|^2_{\tilde{H}^{1,0}}dt\right]=0,
\end{align*}
\begin{align*}
\lim_{\varepsilon\rightarrow 0}\left[E\sup_{t\in[0,T]}(e^{-kg(t)}\|Y^\varepsilon(t)\|_{\tilde{H}^{0,1}}^2)+E\int^T_0e^{-kg(t)}\|Y^\varepsilon(t)\|^2_{\tilde{H}^{1,1}}dt\right]=0,
\end{align*}
where $g(t)=\int^t_0\|Z^\varepsilon_v(s)\|^2_Hds$ and $k$ are the same as in (\ref{eq02 in lemma estimate eq. for weak convergence}).

Set $$\Xi:=\left(\chi, \mathcal{S}_N,  L^\infty([0,T],H)\bigcap L^2([0,T],\tilde{H}^{1,0}) \bigcap C([0,T].H^{-1})\right).$$ The above limit implies that $Y^\varepsilon\rightarrow 0$ a.s. in $ L^\infty([0,T],H)\bigcap L^2([0,T],\tilde{H}^{1,0}) \bigcap C([0,T].H^{-1})$ as $\varepsilon\rightarrow 0$ (in the sense of subsequence). By Lemma \ref{tightness lemma} the family $\{(Z^\varepsilon_v,v^\varepsilon)\}_{\varepsilon\in(0,\varepsilon_0)}$ is tight in $(\chi, \mathcal{S}_N)$. Let $(Z_v, v, 0)$ be any limit point of $\{(Z^\varepsilon_v, v^\varepsilon, Y^\varepsilon)\}_{\varepsilon\in(0,\varepsilon_0)}$. Our goal is to show that $Z_v$ has the same law as $g^0\left(\int^\cdot_0v(s)ds\right)$ and $Z^\varepsilon_v$ convergence in distribution to $Z_v$ in the space $ L^{\infty}([0,T],H)\bigcap L^2([0,T],\tilde{H}^{1,0})\bigcap C([0,T],H^{-1})$.

By the Skorokhod Theorem, there exists a stochastic basis $(\tilde{\Omega}, \tilde{\mathcal{F}}, \{\tilde{\mathcal{F}}_t\}_{t\in[0,T]}, \tilde{P})$ and, on this basis, $\Xi$-valued random variables $(\tilde{Z}_v, \tilde{v}, 0)$, $(\tilde{Z}^\varepsilon_v, \tilde{v}^\varepsilon, \tilde{Y}^\varepsilon)$, such that $(\tilde{Z}^\varepsilon_v, \tilde{v}^\varepsilon, \tilde{Y}^\varepsilon)$ (respectively $(\tilde{Z}_v, \tilde{v}, 0)$) has the same law as $(Z^\varepsilon_v,v^\varepsilon, Y^\varepsilon)$ (respectively $(Z_v,v,0)$), and $(\tilde{Z}^\varepsilon_v, \tilde{v}^\varepsilon, \tilde{Y}^\varepsilon)\rightarrow (\tilde{Z}_v, \tilde{v}, 0)$, $\tilde{P}$-a.s.

We have
\begin{equation}\label{weak convergence step 1}\aligned
d(\tilde{Z}^\varepsilon_v(t)-\tilde{Y}^\varepsilon(t))=&\partial_1^2(\tilde{Z}^\varepsilon_v(t)-\tilde{Y}^\varepsilon(t))dt-B(\tilde{Z}^\varepsilon_v(t))dt+\sigma(t,\tilde{Z}^\varepsilon_v(t))\tilde{v}^\varepsilon(t)dt,\\
\tilde{Z}^\varepsilon_v(0)-\tilde{Y}^\varepsilon(0)=&u_0,
\endaligned
\end{equation}
and
\begin{align*}
&P(\tilde{Z}^\varepsilon_v-\tilde{Y}^\varepsilon\in L^\infty([0,T],H)\bigcap L^2([0,T], \tilde{H}^{1,0})\bigcap C([0,T],H^{-1}))\\
=&P({Z}^\varepsilon_v-{Y}^\varepsilon\in L^\infty([0,T],H)\bigcap L^2([0,T], \tilde{H}^{1,0})\bigcap C([0,T],H^{-1}))\\
=&1.
\end{align*}

Let $\tilde{\Omega}_0$ be the subset of $\tilde{\Omega}$ such that for $\omega\in\tilde{\Omega}_0$,
$$(\tilde{Z}^\varepsilon_v, \tilde{v}^\varepsilon, \tilde{Y}^\varepsilon)(\omega)\rightarrow (\tilde{Z}_v,\tilde{v}, 0)(\omega) \text{ in }\Xi,$$ 
and 
$$e^{-k\int^{\cdot}_0\|\tilde{Z}^\varepsilon_v(\omega, s)\|^2_Hds}\tilde{Y}^\varepsilon(\omega)\rightarrow 0 \text{ in } L^{\infty}([0,T],\tilde{H}^{0,1})\bigcap L^2([0,T],\tilde{H}^{1,1})\bigcap C([0,T],H^{-1}),$$ 
then $P(\tilde{\Omega}_0)=1$. For any $\omega\in\tilde{\Omega}_0$, fix $\omega$, we have $\sup_{\varepsilon}\int^T_0\|\tilde{Z}^\varepsilon_v(\omega,s)\|_H^2ds<\infty$, then we deduce that 
\begin{equation}\label{convergence of Yvarepsilon to 0}
\lim_{\varepsilon\rightarrow 0} \left(\sup_{t\in[0,T]}\|\tilde{Y}^\varepsilon(\omega,t)\|_{\tilde{H}^{0,1}}+\int^T_0\|\tilde{Y}^\varepsilon(\omega,t)\|^2_{\tilde{H}^{1,1}}dt\right)=0.
\end{equation}

Now we  show that
\begin{equation}\label{weak convergence step 2}
\sup_{t\in[0,T]}\|\tilde{Z}^\varepsilon_v(\omega,t)-\tilde{Z}_v(\omega,t)\|^2_H+\int^T_0\|\tilde{Z}^\varepsilon_v(\omega, t)-\tilde{Z}_v(\omega,t)\|^2_{\tilde{H}^{1,0}}dt\rightarrow 0\text{ as }\varepsilon\rightarrow 0.
\end{equation}

Let  $Z^\varepsilon=\tilde{Z}^\varepsilon_v(\omega)-\tilde{Y}^\varepsilon(\omega)$, then by  (\ref{weak convergence step 1}) we have
\begin{equation}
dZ^\varepsilon(t)=\partial_1^2Z^\varepsilon(t) dt-B(Z^\varepsilon(t)+\tilde{Y}^\varepsilon(t))dt+\sigma(t,Z^\varepsilon(t)+\tilde{Y}^\varepsilon(t))\tilde{v}^\varepsilon(t)dt.
\end{equation}
Since $Z^\varepsilon(\omega)\rightarrow \tilde{Z}_v(\omega)$  in $\chi$, by a very similar argument as in Lemma \ref{good rate function} we deduce that  $\tilde{Z}_v=z^{\tilde{v}}=g^0\left(\int^\cdot_0{\tilde{v}}(s)ds\right)$. Moreover, note that $\tilde{Z}^\varepsilon_v(\omega)\rightarrow z^{\tilde{v}}(\omega)$ weak star in $L^{\infty}([0,T],\tilde{H}^{0,1})$, then the uniform boundedness principle implies that 
\begin{equation}\label{uniform bded of Z in H01}
\sup_{\varepsilon}\sup_{t\in[0,T]}\|\tilde{Z}^\varepsilon_v(\omega)\|_{\tilde{H}^{0,1}}<\infty.
\end{equation}
Let $w^\varepsilon=Z^\varepsilon-z^{\tilde{v}}$, then we have
\begin{align*}
&\|w^\varepsilon(t)\|^2_H+2\int^t_0\|\partial_1w^\varepsilon(s)\|^2_Hds\\
=&-2\int^t_0\langle w^\varepsilon(s), B(Z^\varepsilon+\tilde{Y}^\varepsilon)-B(z^{\tilde{v}})\rangle ds+2\int^t_0\langle w^\varepsilon(s), \sigma(s,Z^\varepsilon+\tilde{Y}^\varepsilon){\tilde{v}}^\varepsilon(s)-\sigma(s,z^{\tilde{v}}){\tilde{v}}(s)\rangle ds.\\
\end{align*}
By Lemmas \ref{anisotropic estimate for b} and \ref{b(u,v,w)}, we have
\begin{align*}
&\int^t_0\langle w^\varepsilon(s), B(Z^\varepsilon+\tilde{Y}^\varepsilon)-B(z^{\tilde{v}})\rangle ds\\
=&\int^t_0b(\tilde{Y}^\varepsilon,z^{\tilde{v}},w^\varepsilon)+b(\tilde{Y}^\varepsilon,\tilde{Y}^\varepsilon,w^\varepsilon)+b(w^\varepsilon,\tilde{Y}^\varepsilon+z^{\tilde{v}},w^\varepsilon)+b(z^{\tilde{v}},\tilde{Y}^\varepsilon, w^\varepsilon)ds\\
\leqslant &\int^t_0[\frac{1}{2}\|\partial_1w^\varepsilon(s)\|^2_H+\frac{1}{2}\|\tilde{Y}^\varepsilon(s)\|^2_{\tilde{H}^{1,1}}+C(1+\|z^{\tilde{v}}(s)\|_{\tilde{H}^{1,1}}^2+\|\tilde{Y}^\varepsilon(s)\|^2_{\tilde{H}^{1,1}})\|w^\varepsilon(s)\|^2_H]ds\\
&+C\int^t_0\|\tilde{Y}^\varepsilon(s)\|^2_{\tilde{H}^{1,1}}\|w^\varepsilon(s)\|_Hds\\
\leqslant& \int^t_0\frac{1}{2}\|\partial_1 w^\varepsilon(s)\|^2_Hds+C\int^t_0\|\tilde{Y}^\varepsilon(s)\|^2_{\tilde{H}^{1,1}}ds+C\int^t_0(1+\|z^{\tilde{v}}(s)\|_{\tilde{H}^{1,1}}^2)\|w^\varepsilon(s)\|^2_Hds,
\end{align*}
where we  used the fact that by \eqref{convergence of Yvarepsilon to 0} and \eqref{uniform bded of Z in H01} $w^\varepsilon$ are uniformly bounded in $L^\infty([0,T], H)$ in the last inequality.
By (A1) and (A3) with $L_2=0$ we have
\begin{align*}
&\int^t_0\langle w^\varepsilon(s), \sigma(s,Z^\varepsilon+\tilde{Y}^\varepsilon)v^\varepsilon(s)-\sigma(s,z^{\tilde{v}}){\tilde{v}}(s)\rangle ds\\
=&\int^t_0\langle w^\varepsilon(s), (\sigma(s,Z^\varepsilon+\tilde{Y}^\varepsilon)-\sigma(s,z^{\tilde{v}})) {\tilde{v}}^\varepsilon(s)\rangle ds+\int^t_0\langle w^\varepsilon(s), \sigma(s,z^{\tilde{v}})({\tilde{v}}^\varepsilon(s)- {\tilde{v}}(s))\rangle ds\\
\leqslant &C\int^t_0(\|w^\varepsilon(s)\|_H\|{\tilde{v}}^\varepsilon(s)\|_{l^2}(\|w^
\varepsilon(s)\|^2_H+\|\tilde{Y}^\varepsilon(s)\|^2_H)^\frac{1}{2} ds\\
&+\int^t_0\|w^\varepsilon(s)\|_H\|{\tilde{v}}^\varepsilon(s)-{\tilde{v}}(s)\|_{l^2}(K_0+K_1\|z^{\tilde{v}}(s)\|^2_H+K_2\|\partial_1 z^{\tilde{v}}(s)\|^2_H)^\frac{1}{2} ds\\
\leqslant &C N^\frac{1}{2}\left(\int^t_0(\|w^
\varepsilon(s)\|^2_H+\|\tilde{Y}^\varepsilon(s)\|^2_H ds\right)^\frac{1}{2}\\
&+CN^\frac{1}{2}\left(\int^t_0\|w^\varepsilon(s)\|_H^2(K_0+K_1\|z^{\tilde{v}}(s)\|^2_H+K_2\|\partial_1 z^{\tilde{v}}(s)\|^2_H) ds\right)^\frac{1}{2},
\end{align*}
where we used the fact that $w^\varepsilon$ are uniformly bounded in $L^\infty([0,T], H)$ and that ${\tilde{v}}^\varepsilon$,  ${\tilde{v}}$ are in $\mathcal{A}_N$.
Thus we have
\begin{align*}
&\|w^\varepsilon(t)\|^2_H+\int^t_0\|\partial_1w^\varepsilon(s)\|^2_Hds\\
\leqslant& C\int^t_0(1+\|z^{\tilde{v}}(s)\|_{\tilde{H}^{1,1}}^2)\|w^\varepsilon(s)\|^2_H ds+C\int^t_0\|\tilde{Y}^\varepsilon(s)\|^2_{\tilde{H}^{1,1}}ds\\
&+CN^\frac{1}{2}\left(\int^t_0(\|w^\varepsilon(s)\|^2_H+\|\tilde{Y}^\varepsilon(s)\|^2_H)ds\right)^\frac{1}{2}+CN^\frac{1}{2}\left(\int^t_0(1+\|z^{\tilde{v}}(s)\|^2_{\tilde{H}^{1,1}})\|w^\varepsilon(s)\|^2_Hds\right)^\frac{1}{2}.
\end{align*}

Since $Z^\varepsilon(\omega)\rightarrow z^{\tilde{v}}(\omega)$ strongly in $L^2([0,T],H)$ and $\tilde{Y}^\varepsilon\rightarrow 0$ in $L^2([0,T],\tilde{H}^{1,1})$, the same argument used in Lemma \ref{good rate function} implies
\begin{equation}\label{weak convergence step 3}
\sup_{t\in[0,T]}\|\tilde{Z}^\varepsilon_v(\omega,t)-z^{\tilde{v}}(\omega,t)\|^2_H+\int^T_0\|\tilde{Z}^\varepsilon_v(\omega, t)-z^{\tilde{v}}(\omega,t)\|^2_{\tilde{H}^{1,0}}dt\rightarrow 0\text{ as }\varepsilon\rightarrow 0.
\end{equation}

%%Note that $(\tilde{Z}^\varepsilon_v,z^{\tilde{v}})$ and $(Z^\varepsilon_v,z^v)$ have the same law on
%% $$L^\infty([0,T],H)\bigcap L^2([0,T], \tilde{H}^{1,0})\bigcap C([0,T],H^{-1}),$$
The proof is thus complete.

\end{proof}

\begin{proof}[{Proof of Theorem \ref{main result LDP}}]
The result holds from Lemmas \ref{weak convergence method}, \ref{good rate function} and \ref{weak convergence}.
\end{proof}

\section{Small time asymptotics}

In this section, we consider the small time behaviour. We need the following additional assumption (A3') and (A4). Note that (A3') is stronger than (A3).

(A3') $\|\sigma(t,u)-\sigma(s,v)\|^2_{L_2(l^2, H)}\leqslant L_0|t-s|^\alpha+L_1\|u-v\|^2_{H}$.

(A4) $\|\sigma(t,u)\|^2_{L_2(l^2, V)}\leqslant \overline{K}_0+\overline{K}_1\|u\|_{V}^2$.

\begin{remark} \text{ }
A typical example of $\sigma$ is similar as in \cite[Remark 4.2]{LZZ18}. For $u=(u^1,u^2)\in H^{1,1}$ and $y\in l^2$, let
$$\sigma(t,u)y=\sum^\infty_{k=1}b_kg(u)\langle y, \psi_k\rangle_{l^2},$$
where $\{\psi_k\}_{k\geqslant 0}$ is the orthonormal basis of $l^2$, $\{b_k\}_{k\geqslant 0}$ are functions from $\mathbb{T}^2$ to $\mathbb{R}$ and $g$ is a differentiable function from $\mathbb{R}^2$ to $\mathbb{R}$. Assume that  $|g(x)-g(y)|\leqslant C|x-y|$ for all $x,y\in\mathbb{R}^2$ and some constant $C$  depends on $g$.  Also suppose that $\textsl{div}(b_kg(u))=0$ and $b_k, \partial_1 b_k, \partial_2b_k\in L^\infty$, $\sum^\infty_{k=1}\|b_k\|_{L^\infty}^2\leqslant M$, $\sum^\infty_{k=1}\|\partial_1 b_k\|_{L^\infty}^2\leqslant M$ and $\sum^\infty_{k=1}\|\partial_2 b_k\|_{L^\infty}^2\leqslant M$. From the conditions of $g$, it is easy to obtain $|g(u)|\leqslant C|u|+C$, $|\partial_1g(u)|\leqslant C$ and $|\partial_2g(u)|\leqslant C$. In this case,
$\sigma$  satisfies (A0)-(A4) and (A3'):
\begin{align*}
\|\sigma(t,u)\|^2_{L_2(l^2, H)}\leqslant& \sum^\infty_{k=1}\|b_kg(u)\|^2_H\leqslant CM(\|u\|^2_H+1);\\
\|\sigma(t,u)\|^2_{L_2(l^2, H^{0,1})}\leqslant&\sum^\infty_{k=1}\|b_kg(u)\|^2_H+\sum^\infty_{k=1}\|\partial_2(b_kg(u))\|^2_H\\
\leqslant& CM(\|u\|^2_H+1)+\sum^\infty_{k=1}\|\partial_2b_kg(u)+b_k(\partial_1g(u)\partial_2u^1+\partial_2g(u)\partial_2u^2)\|^2_H\\
\leqslant& CM(1+\|u\|^2_H+\|\partial_2u\|^2_H);\\
\|\sigma(t,u)\|^2_{L_2(l^2, V)}\leqslant &C{M}(\|u\|^2_H+1)+\sum^\infty_{k=1}\|\partial_1(b_kg(u))\|^2_H+\sum^\infty_{k=1}\|\partial_2(b_kg(u))\|^2_H\\
\leqslant& C{M}(1+\|u\|^2_H+\|\partial_1u\|^2_H+\|\partial_2u\|^2_H);\\
\|\sigma(t,u)-\sigma(s,v)\|^2_{L_2(l^2, H)}\leqslant&{M}C\|u-v\|^2_H.\\
\end{align*}

\end{remark}

Let $\varepsilon>0$ and $u$ be the solution to (\ref{equation after projection}), by the scaling property of the Brownian motion, $u(\varepsilon t)$ coincides in law with the solution to the following equation:
\begin{equation}\label{eq. for small time}\aligned
&du_\varepsilon=\varepsilon\partial_1^2u_\varepsilon dt-\varepsilon B(u_\varepsilon)dt+\sqrt{\varepsilon}\sigma(\varepsilon t, u_\varepsilon)dW(t),\\
&u_\varepsilon(0)=u_0.
\endaligned
\end{equation}

Define a functional $I^{u_0}$ on $L^\infty([0,T], H)\bigcap C([0,T],H^{-1})$ by
$$I^{u_0}(g)=\inf_{h\in\Gamma_g}\{\frac{1}{2}\int^T_0\|{h}(t)\|_{l^2}^2dt\},$$
where 
$$\Gamma_g=\{h\in L^2([0,T],l^2): g(t)=u_0+\int^t_0\sigma(0,g(s)){h}(s)ds, \text{ }t\in[0,T]\}.$$

The main theorem of this section is the following one:
\begin{Thm}\label{main}
Assume (A0), (A1), (A2), (A3'), (A4) hold with $K_2=\tilde{K}_2=0$ and $u_0\in \tilde{H}^{0,1}$, then $u_\varepsilon$ satisfies a large deviation principle on the space $L^\infty([0,T], H)\bigcap C([0,T],H^{-1})$ with the good rate function $I^{u_0}$.
\end{Thm}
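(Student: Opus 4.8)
The plan is to follow the exponential--equivalence scheme of \cite{XZ08}. Introduce the linear comparison process $Z_\varepsilon$, the solution of
\[
dZ_\varepsilon(t)=\sqrt{\varepsilon}\,\sigma(0,Z_\varepsilon(t))\,dW(t),\qquad Z_\varepsilon(0)=u_0,
\]
obtained from \eqref{eq. for small time} by deleting the two drift terms (which both carry the factor $\varepsilon$) and freezing the time variable in $\sigma$. By (A3') the map $\sigma(0,\cdot)\colon H\to L_2(l^2,H)$ is globally Lipschitz, and by (A4) it sends $V$ into $L_2(l^2,V)$ with linear growth, so $Z_\varepsilon$ is a well-defined $V$-valued diffusion. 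The proof then splits into two parts: (i) $\{Z_\varepsilon\}$ satisfies an LDP on $L^\infty([0,T],H)\cap C([0,T],H^{-1})$ with good rate function $I^{u_0}$; (ii) $u_\varepsilon$ and $Z_\varepsilon$ are exponentially equivalent on this space. Granting both, Lemma \ref{EXEQ} yields Theorem \ref{main}. Part (i) is a routine application of the weak convergence approach (Lemma \ref{weak convergence method}): here Hypothesis \ref{Hyp} is easy to check since no unbounded operator is present, the skeleton equation $\dot g=\sigma(0,g)h$ is precisely the one defining $\Gamma_g$, and continuity of $h\mapsto g$ together with the convergence of the controlled processes follows from elementary Lipschitz/Gronwall estimates; this identifies the rate function as $I^{u_0}$. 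So the substance of the proof is part (ii), and since on the space in question the distance is dominated by $\sup_{t\le T}\|\cdot\|_H$, it is enough to prove that for every $\delta>0$,
\[
\lim_{\varepsilon\to 0}\varepsilon\log P\Big(\sup_{t\in[0,T]}\|u_\varepsilon(t)-Z_\varepsilon(t)\|_H>\delta\Big)=-\infty.
\]

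\textbf{Exponential energy estimates and localization.} First I would establish uniform--in--$\varepsilon$ exponential estimates. Applying It\^o's formula to $\|u_\varepsilon\|_H^2$, using the cancellation $b(u_\varepsilon,u_\varepsilon,u_\varepsilon)=0$, (A1) with $K_2=0$ and the exponential martingale inequality (together with Lemma \ref{martingale lemma}), one gets, for some $\lambda>0$,
\[
\sup_{\varepsilon\in(0,1)}E\exp\!\Big(\tfrac{\lambda}{\varepsilon}\big(\sup_{t\le T}\|u_\varepsilon(t)\|_H^2+\varepsilon\textstyle\int_0^T\|\partial_1u_\varepsilon\|_H^2\,ds\big)\Big)<\infty ;
\]
running the same argument on the $\tilde H^{0,1}$-energy identity, with Lemma \ref{estimate for b with partial_2}, (A2) with $\tilde K_2=0$ and a Gronwall step inside the exponent (compare \eqref{eq02 in lemma estimate eq. for weak convergence}), gives the analogous bound for $\sup_{t\le T}\|u_\varepsilon(t)\|_{\tilde H^{0,1}}^2+\varepsilon\int_0^T\|u_\varepsilon\|_{\tilde H^{1,1}}^2\,ds$ (carried through the weight $\exp(-k\int_0^\cdot\|\partial_1u_\varepsilon\|_H^2)$), and the same computation with (A4) yields the corresponding bound for $\sup_{t\le T}\|Z_\varepsilon(t)\|_V^2$. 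The hypotheses $K_2=\tilde K_2=0$ enter exactly here: they remove the $\|\partial_1u_\varepsilon\|^2$-terms from the growth bounds on $\|\sigma\|_{L_2}^2$, which would otherwise couple $\sup\|u_\varepsilon\|^2/\varepsilon$ with $\int\|\partial_1u_\varepsilon\|^2$ inside the quadratic variation and break the exponential integrability. Next, for $M>0$ set
\[
\tau_M^\varepsilon:=T\wedge\inf\Big\{t>0:\ \|u_\varepsilon(t)\|_{\tilde H^{0,1}}^2+\varepsilon\textstyle\int_0^t\|u_\varepsilon\|_{\tilde H^{1,1}}^2\,ds+\|Z_\varepsilon(t)\|_V^2>M\Big\};
\]
by the estimates above and Chebyshev's inequality, $\limsup_{M\to\infty}\limsup_{\varepsilon\to 0}\varepsilon\log P(\tau_M^\varepsilon<T)=-\infty$, so it suffices to control $\sup_{t\le\tau_M^\varepsilon}\|u_\varepsilon(t)-Z_\varepsilon(t)\|_H$ for each fixed $M$.

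\textbf{The difference estimate.} Put $w_\varepsilon=u_\varepsilon-Z_\varepsilon$, so that
\[
dw_\varepsilon=\varepsilon\partial_1^2u_\varepsilon\,dt-\varepsilon B(u_\varepsilon)\,dt+\sqrt{\varepsilon}\big(\sigma(\varepsilon t,u_\varepsilon)-\sigma(0,Z_\varepsilon)\big)dW(t),\qquad w_\varepsilon(0)=0,
\]
and apply It\^o to $\|w_\varepsilon(t\wedge\tau_M^\varepsilon)\|_H^2$. The key points: writing $\partial_1^2u_\varepsilon=\partial_1^2w_\varepsilon+\partial_1^2Z_\varepsilon$ produces the good dissipation $-2\varepsilon\|\partial_1w_\varepsilon\|_H^2$ for $w_\varepsilon$ plus a term controlled pointwise by $\varepsilon\|\partial_1Z_\varepsilon\|_H^2\le\varepsilon M$; the nonlinear term equals $-2\varepsilon\,b(u_\varepsilon,u_\varepsilon,w_\varepsilon)=-2\varepsilon\,b(u_\varepsilon,Z_\varepsilon,u_\varepsilon)$, which by Lemma \ref{anisotropic estimate for b} and $\|Z_\varepsilon\|_{\tilde H^{1,1}}\le\|Z_\varepsilon\|_V\le\sqrt M$ is at most $\tfrac{\varepsilon}{2}\|\partial_1w_\varepsilon\|_H^2+\varepsilon C_M$ on $\{t\le\tau_M^\varepsilon\}$; the It\^o correction is $\varepsilon\|\sigma(\varepsilon t,u_\varepsilon)-\sigma(0,Z_\varepsilon)\|_{L_2(l^2,H)}^2\le 2\varepsilon L_1\|w_\varepsilon\|_H^2+2\varepsilon^{1+\alpha}L_0T^\alpha$ by (A3'); and the stochastic term $M_t^\varepsilon$ is a continuous martingale with $\langle M^\varepsilon\rangle_t\le \varepsilon C_M\int_0^t\|w_\varepsilon\|_H^2\,ds$ on the stopped interval. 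Collecting terms, on $\{t\le\tau_M^\varepsilon\}$,
\[
\|w_\varepsilon(t)\|_H^2+\varepsilon\int_0^t\|\partial_1w_\varepsilon\|_H^2\,ds\le \varepsilon C_M+C_M\varepsilon\int_0^t\|w_\varepsilon(s)\|_H^2\,ds+M_t^\varepsilon,
\]
so that every term on the right apart from the martingale carries a factor $\varepsilon$ and the Gronwall coefficient itself carries a factor $\varepsilon$. A stopping--time bootstrap (stop also when $\|w_\varepsilon\|_H^2$ exceeds a fixed level) makes the quadratic variation of $M^\varepsilon$ small, and the exponential supermartingale inequality combined with the Gronwall bound above then forces $\|w_\varepsilon(t\wedge\tau_M^\varepsilon)\|_H^2$ to stay below any prescribed $\delta^2$ with overwhelming probability; this is the argument of \cite{XZ08}. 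One concludes $\limsup_{\varepsilon\to 0}\varepsilon\log P\big(\sup_{t\le\tau_M^\varepsilon}\|w_\varepsilon\|_H>\delta\big)=-\infty$ for each $M$, which together with the bound on $P(\tau_M^\varepsilon<T)$ and $M\to\infty$ gives the exponential equivalence; Lemma \ref{EXEQ} finishes the proof.

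\textbf{Main obstacle.} The delicate part is the one stressed in the introduction: because the only dissipation is $\varepsilon\partial_1^2$, the basic $L^2$-energy bound yields $\int_0^T\|\partial_1u_\varepsilon\|_H^2\,ds$ of order $1/\varepsilon$, so there is no compactness/control of the nonlinearity at the $L^2$-level and every estimate touching $B(u_\varepsilon)$ must be run in the anisotropic scale $\tilde H^{0,1}/\tilde H^{1,1}$ exactly as in \cite{LZZ18} and localized through $\tau_M^\varepsilon$ (using crucially that $Z_\varepsilon\in V$ so that $\partial_1^2Z_\varepsilon$ is controlled pointwise). Doing all of this while keeping the $\varepsilon$-weights, the stopping times and the Gronwall weights mutually compatible with the exponential--moment and exponential--martingale machinery is where the bookkeeping is heaviest, and it is precisely this compatibility that forces $K_2=\tilde K_2=0$.
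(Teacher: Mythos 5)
Your overall scheme (exponential equivalence with a drift-free comparison process, anisotropic energy estimates, localization by stopping times, Lemma \ref{EXEQ}) is the paper's scheme, but there is a genuine gap at its center: you treat the comparison process $Z_\varepsilon$, started from $u_0$, as a $V$-valued process whose $V$-norm has uniform exponential tail bounds, and you build the localization $\tau_M^\varepsilon$ and the control of the term $\varepsilon\int\langle w_\varepsilon,\partial_1^2 Z_\varepsilon\rangle\,ds$ on that. The theorem only assumes $u_0\in\tilde H^{0,1}$, and the equation for $Z_\varepsilon$ is a pure stochastic integral equation with no smoothing whatsoever; since $\tilde H^{0,1}$ gives no control of $\partial_1 u_0$, the process $Z_\varepsilon$ need not take values in $V$ at any time, so the event $\{\|Z_\varepsilon(t)\|_V^2>M\}$ in your stopping time and the claimed bound $\sup_t\|Z_\varepsilon(t)\|_V^2\leqslant M$ are vacuous, and the pointwise bound on $\varepsilon\partial_1^2 Z_\varepsilon$ collapses. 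This is exactly the point the paper handles by a density argument that is absent from your proposal: approximate $u_0$ by $u_0^n\in V$, show that $u_{n,\varepsilon}$ is exponentially equivalent to $u_\varepsilon$ and $v_{n,\varepsilon}$ to $v_\varepsilon$ uniformly in $\varepsilon$ as $n\to\infty$ (Lemmas \ref{lemma5}, \ref{lemma6}), prove via (A4) and Kolmogorov's criterion that $v_{n,\varepsilon}\in C([0,T],V)$ so that its $V$-norm has the exponential bound of Lemma \ref{lemma4}, and only then prove the exponential equivalence of $u_{n,\varepsilon}$ and $v_{n,\varepsilon}$ for each fixed $n$ (Lemma \ref{lemma7}); the theorem follows by the triangle inequality. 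Without this layer your argument cannot close under the stated hypothesis on $u_0$.

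A second, related flaw is in your difference estimate. You bound the nonlinear term through $b(u_\varepsilon,Z_\varepsilon,u_\varepsilon)$ using ``$\|Z_\varepsilon\|_{\tilde H^{1,1}}\leqslant\|Z_\varepsilon\|_V$'', which is false: the $\tilde H^{1,1}$-norm contains the mixed derivative $\partial_1\partial_2$ and is not dominated by the $H^1$-norm, so Lemma \ref{anisotropic estimate for b} cannot be fed with the $V$-bound on the comparison process. Moreover, applied in your form the lemma produces the dissipation term $a\|\partial_1 u_\varepsilon\|_H^2$ (not $\|\partial_1 w_\varepsilon\|_H^2$), whose time integral on the stopped interval is only $\leqslant aM/\varepsilon\cdot\varepsilon=aM$, a constant that does not vanish as $\varepsilon\to0$, so the Gronwall bound does not force $\sup_t\|w_\varepsilon\|_H^2$ to be small. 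The paper avoids both problems by splitting $b(u_{n,\varepsilon},u_{n,\varepsilon},w)=b(w,u_{n,\varepsilon},w)+b(v_{n,\varepsilon},u_{n,\varepsilon},w)$: the first piece gives dissipation in $w$ plus $C(1+\|u_{n,\varepsilon}\|^2_{\tilde H^{1,1}})\|w\|_H^2$, absorbed by the exponential weight $e^{-k\varepsilon\int(1+\|u_{n,\varepsilon}\|^2_{\tilde H^{1,1}})ds}$ together with the stopping time controlling $G_{u_{n,\varepsilon}}$, while the second is estimated by Lemma \ref{b(u,v,w)} using only $\|v_{n,\varepsilon}\|_V$ and $\|u_{n,\varepsilon}\|_{\tilde H^{1,1}}$. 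Finally, note that the localization itself is not free of charge: $u_\varepsilon$ is only weakly continuous in $H$ and $\tilde H^{0,1}$, so the hitting times you use are stopping times only for $\mathcal{F}_{t+}$ and require the lower–semicontinuity argument of Lemma \ref{stopping time}, a point your proposal passes over.
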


We aim to prove that $u_\varepsilon$ is exponentially equivalent to the solution to the following equation:
\begin{equation}\label{eq for v}
v_\varepsilon(t)=u_0+\sqrt{\varepsilon}\int^t_0\sigma(\varepsilon s, v_\varepsilon(s))dW(s).
\end{equation}

Because of the non-linear form $b(\cdot,\cdot,\cdot)$ and the anisotropic viscosity, we split the proof into several lemmas. 

\begin{lemma}\label{LDP for v}
Assume $u_0\in \tilde{H}^{0,1}$, then $v_\varepsilon$ satisfies a large deviation principle on the space $L^\infty([0,T], H)\bigcap C([0,T],H^{-1})$ with the good rate function $I^{u_0}$.
\end{lemma}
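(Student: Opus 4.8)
My plan is to verify the Budhiraja--Dupuis weak--convergence criterion (Lemma~\ref{weak convergence method}, Hypothesis~\ref{Hyp}) for $v_\varepsilon$ on the metric space $E:=L^\infty([0,T],H)\bigcap C([0,T],H^{-1})$. Since (A3') makes $\sigma(t,\cdot)$ globally Lipschitz on $H$ uniformly in $t$ and (A1) with $K_2=0$ gives $\|\sigma(t,u)\|^2_{L_2(l^2,H)}\le K_0+K_1\|u\|_H^2$, equation~\eqref{eq for v} has a unique strong solution $v_\varepsilon\in C([0,T],H)$, hence a Borel map $g^\varepsilon$ with $v_\varepsilon=g^\varepsilon(W)$; and for $\phi\in L^2([0,T],l^2)$ the skeleton equation $v^\phi(t)=u_0+\int_0^t\sigma(0,v^\phi(s))\phi(s)\,ds$ has a unique solution $v^\phi\in C([0,T],H)$ by Picard iteration. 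Putting $g^0\big(\int_0^\cdot\phi\,ds\big):=v^\phi$ and $g^0:=0$ otherwise, one has $\Gamma_g=\{\phi:g=v^\phi\}$ by uniqueness, so the abstract rate function of Lemma~\ref{weak convergence method} is exactly $I^{u_0}$, and it remains to check the two items of Hypothesis~\ref{Hyp}.

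For the compactness of $K_N$ I would first record, from the energy identity and (A1), that $\sup_{\phi\in S_N}\|v^\phi\|_{C([0,T],H)}\le C(N,u_0)$ and that each such $v^\phi$ is H\"older-$\tfrac12$ in $C([0,T],H)$ uniformly in $\phi\in S_N$. Given $\phi_n\in S_N$, I extract a subsequence with $\phi_n\rightharpoonup\phi$ weakly in $L^2([0,T],l^2)$ (so $v^\phi\in K_N$) and, using the compact embedding $H\hookrightarrow H^{-1}$, a further subsequence with $v^{\phi_n}\to v'$ in $C([0,T],H^{-1})$ and $v^{\phi_n}(s)\rightharpoonup v'(s)$ weakly in $H$ for every $s$. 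Applying Gr\"onwall to $w^n:=v^{\phi_n}-v^\phi$, and writing $\sigma(0,v^{\phi_n})\phi_n-\sigma(0,v^\phi)\phi=[\sigma(0,v^{\phi_n})-\sigma(0,v^\phi)]\phi_n+\sigma(0,v^\phi)(\phi_n-\phi)$ so that the Lipschitz term contributes $2L_1^{1/2}\|w^n\|_H^2\|\phi_n\|_{l^2}$, one gets (since $w^n(0)=0$)
$$\sup_{t\in[0,T]}\|w^n(t)\|_H^2\ \le\ C(N)\,\sup_{r\in[0,T]}\Big|\int_0^r\big\langle\,\sigma(0,v^\phi(s))^*w^n(s),\ \phi_n(s)-\phi(s)\,\big\rangle_{l^2}\,ds\Big|.$$
The crucial point is that $\sigma(0,v^\phi(s))\colon l^2\to H$ is Hilbert--Schmidt, hence compact, so $\sigma(0,v^\phi(s))^*$ maps the weakly convergent $w^n(s)$ to a strongly convergent sequence in $l^2$ pointwise in $s$; since $\|\sigma(0,v^\phi(s))^*w^n(s)\|_{l^2}\le C(N,u_0)$, dominated convergence gives $\sigma(0,v^\phi)^*w^n\to(\text{a limit})$ strongly in $L^2([0,T],l^2)$, and pairing this with $\phi_n-\phi\rightharpoonup0$ forces the right-hand side to $0$ uniformly in $r$. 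Hence $v^{\phi_n}\to v^\phi$ in $C([0,T],H)\subset E$, so $K_N$ is compact.

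For the weak-convergence condition, let $v^\varepsilon\in\mathcal A_N$ with $v^\varepsilon\to v$ in distribution in $S_N$. By Girsanov (as in Lemma~\ref{Girsanov thm to prove existence}), $V^\varepsilon:=g^\varepsilon\big(W(\cdot)+\tfrac{1}{\sqrt\varepsilon}\int_0^\cdot v^\varepsilon\,ds\big)$ solves $dV^\varepsilon=\sigma(\varepsilon t,V^\varepsilon)v^\varepsilon\,dt+\sqrt\varepsilon\,\sigma(\varepsilon t,V^\varepsilon)\,dW$, $V^\varepsilon(0)=u_0$. It\^o's formula on $\|V^\varepsilon\|_H^2$, (A1), Burkholder--Davis--Gundy (Lemma~\ref{martingale lemma}) and Gr\"onwall give $E\sup_{[0,T]}\|V^\varepsilon\|_H^2\le C(N,u_0)$ (and, bootstrapping, the fourth moment); consequently $Y^\varepsilon:=\sqrt\varepsilon\int_0^\cdot\sigma(\varepsilon s,V^\varepsilon)\,dW$ satisfies $E\sup_{[0,T]}\|Y^\varepsilon\|_H^2\le C(N,u_0)\varepsilon\to0$, and a Kolmogorov-type estimate as in Lemma~\ref{tightness lemma} makes $\{(V^\varepsilon,v^\varepsilon,Y^\varepsilon)\}$ tight in $\big(C([0,T],H^{-1})\cap L^\infty_{w^*}([0,T],H)\big)\times S_N\times C([0,T],H)$. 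By Skorokhod's theorem I may assume $(\tilde V^\varepsilon,\tilde v^\varepsilon,\tilde Y^\varepsilon)\to(\tilde V,\tilde v,0)$ a.s.; then a.s.\ $\sup_\varepsilon\|\tilde V^\varepsilon\|_{L^\infty([0,T],H)}<\infty$ (uniform boundedness principle on the $L^\infty_{w^*}$-limit), $\tilde V^\varepsilon(s)\rightharpoonup\tilde V(s)$ in $H$ for each $s$, and $\tilde v^\varepsilon\rightharpoonup\tilde v$ weakly in $L^2([0,T],l^2)$. For fixed $\omega$, $\tilde V^\varepsilon-\tilde Y^\varepsilon$ solves the random ODE $u_0+\int_0^\cdot\sigma(\varepsilon s,\tilde V^\varepsilon)\tilde v^\varepsilon\,ds$, and the same Gr\"onwall/compactness computation applied to $w^\varepsilon:=\tilde V^\varepsilon-\tilde Y^\varepsilon-v^{\tilde v}$ — now with the extra contributions $[\sigma(\varepsilon s,v^{\tilde v})-\sigma(0,v^{\tilde v})]\tilde v^\varepsilon$, controlled by $(L_0(\varepsilon T)^\alpha)^{1/2}$ times integrable factors and hence $o(1)$ via (A3'), and the $\tilde Y^\varepsilon$-terms, $o(1)$ since $\tilde Y^\varepsilon\to0$ in $C([0,T],H)$ — would yield $\tilde V^\varepsilon\to v^{\tilde v}$ in $C([0,T],H)$. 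Since $v^{\tilde v}\stackrel{d}{=}v^v=g^0\big(\int_0^\cdot v\,ds\big)$, this gives $g^\varepsilon\big(W(\cdot)+\tfrac{1}{\sqrt\varepsilon}\int_0^\cdot v^\varepsilon\,ds\big)\to g^0\big(\int_0^\cdot v\,ds\big)$ in distribution in $E$, i.e.\ Hypothesis~\ref{Hyp}(1); Lemma~\ref{weak convergence method} then yields the LDP with rate $I^{u_0}$.

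The main difficulty, already visible in the compactness step, is the term produced by the merely weakly convergent controls $\phi_n-\phi$ (resp.\ $\tilde v^\varepsilon-\tilde v$): a priori this is only a weak$\times$weak pairing, and because \eqref{eq for v} carries no dissipation the usual Aubin--Lions route to strong compactness of $v^{\phi_n}$ (as in Lemma~\ref{good rate function}) is unavailable. The resolution is the Hilbert--Schmidt regularity of $\sigma$, which turns weak convergence in $H$ into strong convergence in $l^2$ after composition with $\sigma(0,v^\phi(s))^*$, making the pairing a harmless strong$\times$weak one; everything else is routine.
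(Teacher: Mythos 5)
Your proposal is correct in outline, but it takes a genuinely different route from the paper. The paper never verifies Hypothesis \ref{Hyp} for $v_\varepsilon$: it introduces the time-frozen auxiliary diffusion $z_\varepsilon(t)=u_0+\sqrt{\varepsilon}\int_0^t\sigma(0,z_\varepsilon(s))dW(s)$, quotes the classical small-noise LDP with rate function $I^{u_0}$ for $z_\varepsilon$ from \cite[Theorem 12.11]{DZ92}, and then proves that $v_\varepsilon$ and $z_\varepsilon$ are exponentially equivalent: It\^o's formula on $\|v_\varepsilon-z_\varepsilon\|_H^2$, assumption (A3') (which simultaneously controls the time increment $(\varepsilon s)^\alpha$ and the spatial Lipschitz term), the martingale moment bound of Lemma \ref{martingale lemma} with $p=1/\varepsilon$, Gronwall and Chebyshev yield $\varepsilon\log P(\sup_t\|v_\varepsilon(t)-z_\varepsilon(t)\|_H^2>\delta)\to-\infty$, and Lemma \ref{EXEQ} transfers the LDP. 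You instead re-prove the LDP for $v_\varepsilon$ directly through the Budhiraja--Dupuis criterion (Lemma \ref{weak convergence method}), absorbing the discrepancy between $\sigma(\varepsilon t,\cdot)$ and $\sigma(0,\cdot)$ via (A3') inside the convergence step. The genuinely new ingredient in your argument is the treatment of the weak$\times$weak pairing in the absence of any dissipation: since $\sigma(0,v^\phi(s))$ is Hilbert--Schmidt, its adjoint is compact and upgrades the pointwise weak convergence of $w^n(s)$ in $H$ (obtained from the uniform $H$-bound, the uniform $\tfrac12$-H\"older modulus and Arzel\`a--Ascoli in $C([0,T],H^{-1})$) to strong $l^2$-convergence, so the pairing against $\phi_n-\phi\rightharpoonup0$ vanishes uniformly in $r$; and, importantly, this works before one knows that the weak limit equals $v^\phi$, so there is no circularity --- the identification $v'=v^\phi$ comes out as a byproduct. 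In terms of trade-offs: the paper's proof is much shorter, matches the exponential-equivalence theme of Section 4, but leans on the cited LDP for the frozen equation; your proof is self-contained, avoids exponential probability estimates altogether, and additionally yields continuity of $\phi\mapsto v^\phi$ from $(S_N,\text{weak})$ into $C([0,T],H)$, at the cost of redoing the Girsanov, moment, tightness and Skorokhod steps (which you only sketch, but which parallel Lemmas \ref{Girsanov thm to prove existence}--\ref{weak convergence} and are routine under (A1) with $K_2=0$ and (A3')). Both routes produce the same good rate function $I^{u_0}$, using uniqueness of the skeleton equation to identify the variational rate function with $I^{u_0}$.
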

\begin{proof}
Let $z_\varepsilon$ be the solution to the stochastic equation:
$$z_\varepsilon(t)=u_0+\sqrt{\varepsilon}\int^t_0\sigma(0,z_\varepsilon(s))dW(s).$$

By  \cite[Theorem 12.11]{DZ92}, we know that $z_\varepsilon$ satisfies a large deviation principle with the good rate function $I^{u_0}$. Applying It\^o's formula to $\|v_{\varepsilon}-z_\varepsilon\|^2_H$, we obtain 
\begin{align*}
\|v_{\varepsilon}(t)-z_\varepsilon(t)\|^2_H=&2\sqrt{\varepsilon}\int^t_0\langle v_{\varepsilon}(s)-z_\varepsilon(s),[ \sigma(\varepsilon s, v_{\varepsilon}(s))-\sigma(0,z_\varepsilon(s)) ]dW(s)\rangle\\
&+\varepsilon\int^t_0\|\sigma(\varepsilon s, v_{\varepsilon}(s))-\sigma(0,z_\varepsilon(s))\|^2_{L_2(l^2,H)}ds.
\end{align*}

Then by (A3') and Lemma \ref{martingale lemma}, we get for $p\geqslant 2$,

\begin{align*}
&\left(E[\sup_{0\leqslant t\leqslant T}\|v_{\varepsilon}(t)-z_\varepsilon(t)\|^{2p}_H]\right)^\frac{2}{p}\\
\leqslant &C\varepsilon \left(E[\sup_{0\leqslant t\leqslant T}\int^t_0\langle v_{\varepsilon}(s)-z_\varepsilon(s),(\sigma(\varepsilon s, v_{\varepsilon}(s))-\sigma(0,z_\varepsilon(s)) )dW(s)\rangle ]^{p}\right)^\frac{2}{p}\\
&+C\varepsilon^2 \left(E[\int^T_0\|\sigma(\varepsilon s, v_{\varepsilon}(s))-\sigma(0,z_\varepsilon(s))\|^2_{L_2(l^2,H)}ds ]^p\right)^\frac{2}{p}\\
\leqslant &C\varepsilon p\left(E\left[\int^T_0\|v_{\varepsilon}(s)-z_\varepsilon(s)\|^2_H\|\sigma(\varepsilon s, v_{\varepsilon}(s))-\sigma(0,z_\varepsilon(s))\|^2_{L_2(l^2, H)}ds\right]^\frac{p}{2}\right)^\frac{2}{p}\\
&+C\varepsilon^2\left(\varepsilon^{2\alpha} T^{2+2\alpha}+T\int^T_0\left(E[\sup_{0\leqslant l\leqslant s}\|v_{\varepsilon}(l)-z_\varepsilon(l)\|_H^{2p}]\right)^\frac{2}{p} ds\right)\\
\leqslant&C\varepsilon p\left(\varepsilon^{2\alpha}+\int^T_0\left(E[\sup_{0\leqslant l\leqslant s}\|v_{\varepsilon}(l)-z_\varepsilon(l)\|_H^{2p}]\right)^\frac{2}{p} ds\right)\\
&+C\varepsilon^2\left(\varepsilon^{2\alpha}+\int^T_0\left(E[\sup_{0\leqslant l\leqslant s}\|v_{\varepsilon}(l)-z_\varepsilon(l)\|_H^{2p}]\right)^\frac{2}{p} ds\right).
\end{align*}

By Gronwall's inequality, we have
$$\left(E[\sup_{0\leqslant t\leqslant T}\|v_{\varepsilon}(t)-z_\varepsilon(t)\|^{2p}_H]\right)^{\frac{2}{p}}\leqslant C(\varepsilon^{1+2\alpha} p+\varepsilon^{2+2\alpha})e^{C(\varepsilon p+\varepsilon^2)}.$$

Then Chebyshev's inequality  implies that

\begin{align*}
\varepsilon\log P(\sup_{0\leqslant t\leqslant T}\|v_{\varepsilon}(t)-z_\varepsilon(t)\|^{2}_H>\delta)\leqslant& \varepsilon \log E[\sup_{0\leqslant t\leqslant T}\|v_{\varepsilon}(t)-z_\varepsilon(t)\|^{2p}_H]-\varepsilon p\log \delta\\
\leqslant & \frac{\varepsilon p}{2} (C+C\varepsilon p +C\varepsilon^2+\log (\varepsilon^{1+2\alpha} p+\varepsilon^{2+2\alpha})-2\log \delta).\\
\end{align*}
Let $p=\frac{1}{\varepsilon}$ and $\varepsilon\rightarrow 0$,  we get that  $v_\varepsilon$ and $z_\varepsilon$ are exponentially equivalent, which by Lemma \ref{EXEQ} implies the result.
\end{proof}

\begin{lemma}\label{lemma1}
Let $F_{u_\varepsilon}(t)=\sup_{0\leqslant s\leqslant t}\|u_\varepsilon(s)\|^2_{H}+\varepsilon\int^t_0\|\partial_1 u_\varepsilon(s)\|^2_{H}ds$, then
$$\lim_{M\rightarrow \infty}\sup_{0<\varepsilon\leqslant 1}\varepsilon\log P(F_{u_\varepsilon}(T)>M)=-\infty.$$
\end{lemma}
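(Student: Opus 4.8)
The plan is to obtain an exponential-tail estimate for $F_{u_\varepsilon}(T)$ by running an It\^o's-formula energy estimate for $\|u_\varepsilon(t)\|_H^2$, controlling the stochastic integral via an exponential supermartingale argument rather than via $L^p$ moments. Recall that $u_\varepsilon(\varepsilon\cdot)$ has the law of the solution of \eqref{eq. for small time}, so we work with \eqref{eq. for small time}. Applying It\^o's formula to $\|u_\varepsilon(t)\|_H^2$ and using $b(u_\varepsilon,u_\varepsilon,u_\varepsilon)=0$ together with $K_2=0$ in (A1), we get
\begin{align*}
\|u_\varepsilon(t)\|_H^2+2\varepsilon\int_0^t\|\partial_1 u_\varepsilon(s)\|_H^2\,ds
=\|u_0\|_H^2&+\varepsilon\int_0^t\|\sigma(\varepsilon s,u_\varepsilon(s))\|_{L_2(l^2,H)}^2\,ds\\
&+2\sqrt{\varepsilon}\int_0^t\langle u_\varepsilon(s),\sigma(\varepsilon s,u_\varepsilon(s))\,dW(s)\rangle,
\end{align*}
and by (A1) with $K_2=0$ the drift correction is bounded by $\varepsilon(K_0 T+K_1\int_0^t\|u_\varepsilon(s)\|_H^2\,ds)$. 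Denote by $M_t$ the martingale term; its quadratic variation satisfies $\langle M\rangle_t=4\varepsilon\int_0^t\|\sigma(\varepsilon s,u_\varepsilon(s))^*u_\varepsilon(s)\|_{l^2}^2\,ds\le 4\varepsilon\int_0^t\|\sigma(\varepsilon s,u_\varepsilon(s))\|_{L_2(l^2,H)}^2\|u_\varepsilon(s)\|_H^2\,ds\le C\varepsilon\int_0^t(1+\|u_\varepsilon(s)\|_H^2)\|u_\varepsilon(s)\|_H^2\,ds$.

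The key step is to absorb the martingale. For a parameter $\lambda>0$ to be chosen of order $1/\varepsilon$, the process $\mathcal{E}_t:=\exp\big(\lambda M_t-\tfrac{\lambda^2}{2}\langle M\rangle_t\big)$ is a supermartingale with $E\mathcal{E}_t\le 1$. From the energy identity, on the event where $\sup_{s\le t}\|u_\varepsilon(s)\|_H^2$ stays below a level (which we handle by a stopping-time/localization argument, stopping at $\tau_M=\inf\{t:\|u_\varepsilon(t)\|_H^2>M\}$), we have $\langle M\rangle_t\le C\varepsilon(1+M)\big(\|u_\varepsilon(t)\|_H^2 \text{-increment bound}\big)$; more precisely one uses $\langle M\rangle_{t\wedge\tau_M}\le C\varepsilon(1+M)\int_0^{t\wedge\tau_M}\|u_\varepsilon(s)\|_H^2\,ds$. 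Combining Gronwall on the deterministic part and choosing $\lambda=c/(\varepsilon(1+M))$ small enough that the $\tfrac{\lambda^2}{2}\langle M\rangle$ term is dominated, one arrives at an estimate of the shape
$$E\Big[\exp\Big(\tfrac{c}{\varepsilon(1+M)}\sup_{t\le T\wedge\tau_M}\|u_\varepsilon(t)\|_H^2+\tfrac{c}{1+M}\int_0^{T\wedge\tau_M}\|\partial_1 u_\varepsilon\|_H^2\,ds\Big)\Big]\le \exp\Big(\tfrac{c'(1+\|u_0\|_H^2)}{\varepsilon(1+M)}\Big).$$
Then Chebyshev's inequality in the form $P(F_{u_\varepsilon}(T\wedge\tau_M)>M)\le e^{-cM/(\varepsilon(1+M))}E[\cdots]\le \exp\big(\tfrac{1}{\varepsilon}(c'(1+\|u_0\|_H^2)/(1+M)-cM/(1+M))\big)$, and since on $\{\tau_M\le T\}$ we automatically have $F_{u_\varepsilon}(T\wedge\tau_M)\ge M$, this controls $P(\tau_M\le T)$ and hence $P(F_{u_\varepsilon}(T)>M)$; taking $\varepsilon\log$ gives a bound $\le c'(1+\|u_0\|_H^2)/(1+M)-cM/(1+M)$, which tends to $-\infty$ as $M\to\infty$, uniformly in $0<\varepsilon\le 1$.

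The main obstacle is the careful bookkeeping of the exponential-martingale estimate: one must choose $\lambda$ depending on $M$ (not just on $\varepsilon$) so that the super-linear term $\|u_\varepsilon\|_H^2\cdot\|u_\varepsilon\|_H^2$ in $\langle M\rangle$ is genuinely dominated after the stopping-time truncation at level $M$, and then verify that the resulting exponent still beats the $e^{-\lambda M}$ factor from Chebyshev as $M\to\infty$. A cleaner alternative, which I would actually pursue, is to avoid the super-linearity entirely: apply It\^o to $\log(1+\|u_\varepsilon(t)\|_H^2)$ or work directly with the stopped process and use that on $[0,\tau_M]$ the coefficient $\|u_\varepsilon\|_H^2$ in $\langle M\rangle$ is bounded by $M$, so $\langle M\rangle_{t\wedge\tau_M}\le C\varepsilon(1+M)t$; then the exponential supermartingale with $\lambda\sim 1/(\varepsilon\sqrt{M})$ or $\lambda\sim1/(\varepsilon M)$ suffices and all constants are explicit. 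This is exactly the standard route for such tail estimates (cf. the treatment in \cite{XZ08}), and the anisotropic viscosity causes no extra difficulty here since only the $L^2$-level estimate, with the good sign of $2\varepsilon\int\|\partial_1 u_\varepsilon\|_H^2$, is needed.
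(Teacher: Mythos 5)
There is a genuine gap, and it is quantitative rather than cosmetic. Your main route stops at level $M$ and uses the exponential supermartingale with $\lambda\sim c/(\varepsilon(1+M))$, but the final bound you display, $c'(1+\|u_0\|_H^2)/(1+M)-cM/(1+M)$, tends to the finite constant $-c$ as $M\to\infty$, not to $-\infty$; so even taken at face value the argument does not prove the lemma. The problem is structural: after stopping at $\tau_M$ the bracket is $\langle M\rangle_{t\wedge\tau_M}\leqslant C\varepsilon\int_0^{t\wedge\tau_M}(K_0+K_1\|u_\varepsilon\|_H^2)\|u_\varepsilon\|_H^2\,ds\leqslant C\varepsilon(1+M)^2T$ (your ``cleaner alternative'' drops one factor of $M$ here), while the threshold the martingale must exceed is only of order $M$. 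The optimal exponential-Chebyshev exponent is then of order $M^2/\bigl(\varepsilon(1+M)^2T\bigr)\sim 1/\varepsilon$, uniformly in $M$, so $\varepsilon\log P$ stays bounded and cannot go to $-\infty$. Moreover, the Gaussian-in-$M$ tails your alternative choices $\lambda\sim1/(\varepsilon\sqrt{M})$ aim for are simply false under (A1) with $K_1>0$: for multiplicative noise of linear growth (think of geometric Brownian motion) the tail of $\sup_t\|u_\varepsilon(t)\|_H^2$ decays only like $M^{-c/\varepsilon}$, i.e.\ logarithmically in $M$ after multiplying by $\varepsilon$ --- which is exactly enough for the lemma, but is not reachable by a level-$M$ localization of the raw energy martingale.

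The paper avoids this by working with growing moments instead of exponential moments: after the same It\^o identity and (A1) with $K_2=0$, it applies the martingale inequality of Lemma \ref{martingale lemma} (with its explicit $cp^{1/2}$ constant, as in \cite[(3.12)]{XZ08}) and Gronwall to get $(E[F_{u_\varepsilon}(T)]^p)^{2/p}\leqslant C(\|u_0\|_{\tilde H^{0,1}}^4+\varepsilon^2+\varepsilon p)e^{C(\varepsilon^2+\varepsilon p)}$, then chooses $p=1/\varepsilon$ and Chebyshev, giving $\varepsilon\log P(F_{u_\varepsilon}(T)>M)\leqslant -\log M+O(1)$, which does tend to $-\infty$ uniformly in $\varepsilon\in(0,1]$. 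If you want to stay with exponential martingales, the one workable idea you mention in passing --- apply It\^o to $\log(1+\|u_\varepsilon\|_H^2)$, so that the new martingale has bracket bounded by $C\varepsilon T$ deterministically and the threshold becomes $\log(1+M)$, yielding an exponent of order $(\log M)^2/\varepsilon$ --- would have to be carried out in detail, and it only controls the supremum; the dissipation term $\varepsilon\int_0^T\|\partial_1u_\varepsilon\|_H^2\,ds$ in $F_{u_\varepsilon}$ then needs a separate estimate on the event $\{\sup_t\|u_\varepsilon(t)\|_H^2\leqslant M\}$. As written, the proposal does not establish the statement.
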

\begin{proof}

Since $b(u_\varepsilon,u_\varepsilon,u_\varepsilon)=0$, applying It\^o's formula to $\|u_\varepsilon(t)\|^2_H$, we have
\begin{align*}
&\|u_\varepsilon(t)\|^2_H+2\varepsilon\int^t_0\|\partial_1 u_\varepsilon(s)\|^2_{H}ds\\
=&\|u_0\|^2_H+2\sqrt{\varepsilon}\int^t_0\langle u_\varepsilon(s), \sigma(\varepsilon s, u_\varepsilon(s)dW(s)\rangle+\varepsilon\int^t_0\|\sigma(\varepsilon s, u_\varepsilon(s))\|^2_{L_2(l^2,H)}ds.
\end{align*}

Then it follows from (A1) with $K_2=0$ that 
\begin{align*}
\|u_\varepsilon(t)\|^2_H+\varepsilon\int^t_0\|\partial_1 u_\varepsilon(s)\|^2_{H}ds\leqslant& \|u_0\|^2_{\tilde{H}^{0,1}}+C\varepsilon t+ C\varepsilon \int^t_0\|u_\varepsilon(s)\|^2_Hds\\
&+2\sqrt{\varepsilon}\int^t_0\langle u_\varepsilon, \sigma(\varepsilon s, u_\varepsilon(s))dW(s)\rangle.
\end{align*}

Take supremum over $t$, for $p\geqslant 2$,  we have
\begin{align*}
(E[F_{u_\varepsilon}(T)]^p)^\frac{1}{p}\leqslant & \|u_0\|^2_{\tilde{H}^{0,1}}+C\varepsilon T+C\varepsilon \int^T_0 (E[F_{u_\varepsilon}(t)]^p)^\frac{1}{p}dt\\
&+2\sqrt{\varepsilon}(E[\sup_{0\leqslant t\leqslant T}|\int^t_0\langle u_\varepsilon, \sigma(\varepsilon s, u_\varepsilon(s))dW(s)\rangle|]^p)^\frac{1}{p}.
\end{align*}

For the term in the last line, by Lemma \ref{martingale lemma} and \cite[(3.12)]{XZ08}, we have
\begin{align*}
&2\sqrt{\varepsilon}(E[\sup_{0\leqslant t\leqslant T}|\int^t_0\langle u_\varepsilon, \sigma(\varepsilon s, u_\varepsilon(s))dW(s)\rangle|]^p)^\frac{1}{p}\\
\leqslant& C\sqrt{\varepsilon p} \left[\int^T_01+(E\|u_\varepsilon(s)\|^{2p}_H)^\frac{2}{p} ds\right]^\frac{1}{2}.
\end{align*}

Combining the above estimate, we arrive at
\begin{align*}
\left(E[F_{u_\varepsilon}(T)]^p\right)^\frac{2}{p}\leqslant &C\left(\|u_0\|^2_{\tilde{H}^{0,1}}+\varepsilon T\right)^2+C\varepsilon^2\int^T_0\left(E[F_{u_\varepsilon}(t)]^p\right)^\frac{2}{p}ds\\
&+C\varepsilon p T+C\varepsilon p\int^T_0\left(E[F_{u_\varepsilon}(t)]^p\right)^\frac{2}{p}dt.
\end{align*}

Then the Gronwall's inequality implies 
\begin{align*}
\left(E[F_{u_\varepsilon}(T)]^p\right)^\frac{2}{p}\leqslant C\left[\|u_0\|^4_{\tilde{H}^{0,1}}+\varepsilon^2+\varepsilon p  \right]e^{C\varepsilon^2+C\varepsilon p}.
\end{align*}

Let $p=\frac{1}{\varepsilon}$, by Chebyshev's inequality, we have
\begin{align*}
&\varepsilon\log P(F_{u_\varepsilon}(T)>M)\\
\leqslant& -\log M+ \log \left(E[F_{u_\varepsilon}(T)]^p\right)^\frac{1}{p}\\
\leqslant &-\log M+ \log\sqrt{\|u_0\|^4_{\tilde{H}^{0,1}}+\varepsilon^2 +1}+ C(\varepsilon^2+1).
\end{align*}

Take supremum over $\varepsilon$ and let $M\rightarrow \infty$, we finish the proof.

\end{proof}

\begin{lemma}\label{stopping time}
For M>0, define a random time
$$\tau_{M,\varepsilon}=T\wedge\inf\{t:\|u_\varepsilon(t)\|^2_{H}>M,\text{ or }\varepsilon\int^t_0\|\partial_1 u_\varepsilon(s)\|^2_{H}ds>M\}.$$

Then  $\tau_{M,\varepsilon}$ is a stopping time with respect to $\mathcal{F}_{t+}=\cap_{s>t}\mathcal{F}_s$. 

Similarly, Let
$$\tau'_{M,\varepsilon}= T\wedge\inf\{t: \|u_\varepsilon(t)\|^2_{\tilde{H}^{0,1}}>M,\text{ or }\varepsilon\int^t_0\|u_\varepsilon(s)\|^2_{\tilde{H}^{1,1}}ds>M\},$$
then  $\tau'_{M,\varepsilon}$ is a stopping time with respect to $\mathcal{F}_{t+}$.

\end{lemma}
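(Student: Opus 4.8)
The plan is to realize each of $\tau_{M,\varepsilon}$ and $\tau'_{M,\varepsilon}$ as a first entrance time into an open time-set by an adapted process, for which the $\mathcal{F}_{t+}$-stopping time property is classical. The only genuine difficulty is that the thresholded quantities $t\mapsto\|u_\varepsilon(t)\|_H^2$ and $t\mapsto\|u_\varepsilon(t)\|_{\tilde H^{0,1}}^2$ are not known to be continuous in $t$: by Lemma \ref{ex. and uniq. of solution} applied to \eqref{eq. for small time} the solution $u_\varepsilon$ has a.s.\ paths only in $C([0,T],H^{-1})$. So the first thing I would do is upgrade "continuity" to "lower semicontinuity plus adaptedness", which is all the classical argument actually needs.

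To that end, note that for each $k\in\mathbb{Z}^2$ the Fourier coefficient map $u\mapsto\hat u_k$ is a bounded, hence continuous, linear functional on $H^{-1}$, since $|\hat u_k|\le(1+|k|^2)^{1/2}\|u\|_{H^{-1}}$. As $u_\varepsilon$ is $\mathcal{F}_t$-adapted with a.s.\ paths in $C([0,T],H^{-1})$, each real process $t\mapsto|\hat u_{\varepsilon,k}(t)|^2$ is continuous and adapted; hence so are the finite partial sums
\[
S_N(t):=\sum_{|k|\le N}|\hat u_{\varepsilon,k}(t)|^2,\qquad
\tilde S_N(t):=\sum_{|k|\le N}(1+|k_2|^2)|\hat u_{\varepsilon,k}(t)|^2 .
\]
In the paper's conventions $\|u_\varepsilon(t)\|_H^2=\sum_k|\hat u_{\varepsilon,k}(t)|^2=\sup_N S_N(t)$ and $\|u_\varepsilon(t)\|_{\tilde H^{0,1}}^2=\sum_k(1+|k_2|^2)|\hat u_{\varepsilon,k}(t)|^2=\sup_N\tilde S_N(t)$, so both are, for a.e.\ $\omega$, lower semicontinuous in $t$ and $\mathcal{F}_t$-measurable for every $t$. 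Separately, since $u_\varepsilon\in L^2([0,T],\tilde H^{1,1})$ a.s.\ and $u_\varepsilon$ is progressively measurable, the maps $t\mapsto\varepsilon\int_0^t\|\partial_1u_\varepsilon(s)\|_H^2\,ds$ and $t\mapsto\varepsilon\int_0^t\|u_\varepsilon(s)\|_{\tilde H^{1,1}}^2\,ds$ are a.s.\ continuous, nondecreasing, vanish at $t=0$, and adapted.

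Then I would assemble $\tau_{M,\varepsilon}=T\wedge(\sigma_1\wedge\sigma_2)$ with $\sigma_1=\inf\{t:\|u_\varepsilon(t)\|_H^2>M\}$ and $\sigma_2=\inf\{t:\varepsilon\int_0^t\|\partial_1u_\varepsilon(s)\|_H^2ds>M\}$, and check that $\{\sigma_i<t\}\in\mathcal{F}_t$ for every $t$. For $\sigma_2$ this is immediate from continuity and monotonicity of $g(t):=\varepsilon\int_0^t\|\partial_1u_\varepsilon\|_H^2ds$, since $\{\sigma_2<t\}=\{g(t)>M\}$. For $\sigma_1$, writing $f(t):=\|u_\varepsilon(t)\|_H^2$, lower semicontinuity implies that $\{t\in[0,t_0):f(t)>M\}$ is nonempty iff $f(0)>M$ or it contains a rational point, whence $\{\sigma_1<t_0\}=\{f(0)>M\}\cup\bigcup_{q\in\mathbb{Q}\cap(0,t_0)}\{f(q)>M\}\in\mathcal{F}_{t_0}$. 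Finally, $\{\sigma_i<t\}\in\mathcal{F}_t$ for all $t$ gives $\{\sigma_i\le t\}=\bigcap_{n\ge1}\{\sigma_i<t+1/n\}\in\bigcap_{n}\mathcal{F}_{t+1/n}=\mathcal{F}_{t+}$, so each $\sigma_i$ is an $\mathcal{F}_{t+}$-stopping time; taking minima (with each other and with the constant $T$) preserves this, which proves the assertion for $\tau_{M,\varepsilon}$. The statement for $\tau'_{M,\varepsilon}$ follows verbatim, replacing $\|\cdot\|_H$ by $\|\cdot\|_{\tilde H^{0,1}}$ and $\varepsilon\int_0^t\|\partial_1u_\varepsilon\|_H^2ds$ by $\varepsilon\int_0^t\|u_\varepsilon\|_{\tilde H^{1,1}}^2ds$. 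As indicated, the crux — and the only step that is not routine bookkeeping — is passing from the $C([0,T],H^{-1})$-continuity of $u_\varepsilon$ to the lower semicontinuity of the $H$- and $\tilde H^{0,1}$-norms along the trajectory, which the Fourier expansion supplies for free.
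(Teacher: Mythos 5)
Your proof is correct and follows essentially the same route as the paper: both reduce the question to the norm-exceedance time (the integral terms being continuous adapted), establish lower semicontinuity of $t\mapsto\|u_\varepsilon(t)\|^2_H$ (resp. $\|u_\varepsilon(t)\|^2_{\tilde H^{0,1}}$) along the a.s.\ $C([0,T],H^{-1})$-paths, reduce to countably many rational times, and then pass to $\mathcal{F}_{t+}$. The only cosmetic differences are that you obtain lower semicontinuity as a supremum of continuous Fourier partial sums, whereas the paper deduces it from weak continuity in $H$ (a consequence of $u_\varepsilon\in L^\infty([0,T],H)\cap C([0,T],H^{-1})$), and that you express $\{\sigma<t\}$ as a union over rational times rather than $\{\hat\tau\geqslant t\}$ as an intersection.
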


\begin{proof}
The problem comes with the continuity of $u_\varepsilon(t)$. More precisely,  since $\int^t_0\|\partial_1 u_\varepsilon(s)\|^2_{H}ds$ is a continuous adapted process, we only need to prove  that $\hat{\tau}=\inf\{t>0: \|u_\varepsilon(t)\|_H^2>M\}$ is a stopping time. 

 Since $u_\varepsilon\in L^\infty([0,T],H)\bigcap C([0, T], H^{-1})$, %% we deduce that $\langle h_k, u_\varepsilon(t)\rangle $ is continuous. From the proof of Lemma \ref{lemma1}, we have 
%%$$\sup_{0\leqslant t\leqslant T}\|u_\varepsilon(t)\|^2_H<\infty, a.s.$$
%%which implies the following uniform convergence:
%%$$\sup_{0\leqslant t\leqslant T}|\langle h_k-h, u_\varepsilon(t)\rangle|\leqslant \|h_k-h\|_H\sup_{0\leqslant t\leqslant T}\|u_\varepsilon(t)\|_H\rightarrow 0, \text{ as } k\rightarrow \infty.$$ 
 $u_\varepsilon(t)$ is weakly continuous on $H$, which implies the lower semi-continuity  of $u_{\varepsilon}$ on $H$.
 
%%Since $H$ is separable, there exists a countable subset $\{f_k\}$ such that $\|u_\varepsilon\|_H=\sup_{k}\langle u_\varepsilon, f_k\rangle$. Therefore 
%%$$\{s:\|u_\varepsilon(s)\|_H^2>a\}=\bigcup_k\{|\langle u_\varepsilon(s), f_k\rangle|^2>a\}$$
 %%is an open set for any $a$. Thus $u_\varepsilon(t)$ is lower semi-continuous on $H$.
 
By definition of $\hat{\tau}$, for $t>0$
$$\bigcap_{s\in(0,t]}\{\|u_\varepsilon(s)\|^2_H\leqslant M\}\subset \{\hat{\tau}\geqslant t\}\subset \bigcap_{s\in(0,t)}\{\|u_\varepsilon(s)\|^2_H\leqslant M\}.$$
On the contrary, if $\omega\in\{\hat{\tau}\geqslant t\}$, for any $s<t$, $\|u_\varepsilon(s)(\omega)\|^2_H\leqslant M$. Then lower semi-continuity implies $$\|u_\varepsilon(t)(\omega)\|^2_H\leqslant \liminf_{s<t,s\rightarrow t}\|u_\varepsilon(s)\|^2_H\leqslant M.$$ 
Hence we have
$$\{\hat{\tau}\geqslant t\}=\bigcap_{s\in(0,t]}\{\|u_\varepsilon(s)\|^2_H\leqslant M\}.$$

Note that for $\omega\in\bigcap_{s\in(0,t]\cap\mathbb{Q}}\{\|u_\varepsilon(s)\|^2_H\leqslant M\}$, we have for any $s\in(0,t]$, by the lower semi-continuity, $$\|u_\varepsilon(s)(\omega)\|^2_H\leqslant \liminf_{s'\rightarrow s}\|u_\varepsilon(s')\|^2_H\leqslant \liminf_{s'\rightarrow s, s'\in\mathbb{Q}}\|u_\varepsilon(s')\|^2_H\leqslant M,$$
which means 
$$\bigcap_{s\in(0,t]}\{\|u_\varepsilon(s)\|^2_H\leqslant M\}=\bigcap_{s\in(0,t]\cap\mathbb{Q}}\{\|u_\varepsilon(s)\|^2_H\leqslant M\}.$$

 Then we have for $t>0$
 $$\{\hat{\tau}\geqslant t\}=\bigcap_{s\in(0,t]}\{\|u_\varepsilon(s)\|^2_H\leqslant M\}=\bigcap_{s\in(0,t]\cap\mathbb{Q}}\{\|u_\varepsilon(s)\|^2_H\leqslant M\}\in\mathcal{F}_t,$$
 which implies the result.

For $\tau'_{M,\varepsilon}$, the result follows from the fact that $u_\varepsilon$ is weakly continuous in $\tilde{H}^{0,1}$ since $u_\varepsilon\in L^\infty([0,T],\tilde{H}^{0,1})\bigcap C(0,T],H^{-1})$.

\end{proof}

\begin{lemma}\label{lemma2}
Let $G_{u_\varepsilon}(t)=\sup_{0\leqslant s\leqslant t}\|u_\varepsilon(s)\|^2_{\tilde{H}^{0,1}}+\varepsilon\int^{t}_0\|u_\varepsilon(s)\|^2_{\tilde{H}^{1,1}}ds$. 
For fixed $M_1$, we have 
$$\lim_{M\rightarrow \infty}\sup_{0<\varepsilon\leqslant 1}\varepsilon\log P(G_{u_\varepsilon}(\tau_{M_1,\varepsilon})>M)=-\infty.$$
\end{lemma}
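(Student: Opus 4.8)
The plan is to follow the argument for Lemma \ref{lemma1}, now applying It\^o's formula to $\|u_\varepsilon(t)\|^2_{\tilde{H}^{0,1}}$; the new feature is that the $\partial_2$-part of the nonlinearity no longer vanishes, and the stopping time $\tau_{M_1,\varepsilon}$ is there precisely to tame it. Using \eqref{eq. for small time} and It\^o's formula (localizing the stochastic integral with $\tau'_{M',\varepsilon}$ from Lemma \ref{stopping time} and letting $M'\to\infty$, which is legitimate since $\tau'_{M',\varepsilon}\uparrow T$ a.s.), one gets for $t\leqslant\tau_{M_1,\varepsilon}$
\begin{align*}
&\|u_\varepsilon(t)\|^2_{\tilde{H}^{0,1}}+2\varepsilon\int^t_0\big(\|\partial_1 u_\varepsilon(s)\|^2_H+\|\partial_1\partial_2 u_\varepsilon(s)\|^2_H\big)ds\\
=&\|u_0\|^2_{\tilde{H}^{0,1}}-2\varepsilon\int^t_0\langle\partial_2 u_\varepsilon(s),\partial_2(u_\varepsilon\cdot\nabla u_\varepsilon)(s)\rangle ds+2\sqrt{\varepsilon}\int^t_0\langle u_\varepsilon(s),\sigma(\varepsilon s,u_\varepsilon(s))dW(s)\rangle_{\tilde{H}^{0,1}}+\varepsilon\int^t_0\|\sigma(\varepsilon s,u_\varepsilon(s))\|^2_{L_2(l^2,\tilde{H}^{0,1})}ds.
\end{align*}
By Lemma \ref{estimate for b with partial_2} the cubic term is $\leqslant\frac12\|\partial_1\partial_2 u_\varepsilon\|^2_H+C(1+\|\partial_1 u_\varepsilon\|^2_H)\|\partial_2 u_\varepsilon\|^2_H$, and by (A2) with $\tilde{K}_2=0$ the last term is $\leqslant C(1+\|u_\varepsilon\|^2_{\tilde{H}^{0,1}})$.

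After absorbing $\|\partial_1\partial_2 u_\varepsilon\|^2_H$ on the left, using the elementary bound $\|u\|^2_{\tilde{H}^{1,1}}\leqslant\|u\|^2_{\tilde{H}^{0,1}}+\|\partial_1 u\|^2_H+\|\partial_1\partial_2 u\|^2_H$, and invoking $\|u_\varepsilon(s)\|^2_H\leqslant M_1$ and $\varepsilon\int^s_0\|\partial_1 u_\varepsilon\|^2_H dr\leqslant M_1$ on $[0,\tau_{M_1,\varepsilon}]$, taking the supremum over $t$ gives the pathwise bound
$$G_{u_\varepsilon}(t\wedge\tau_{M_1,\varepsilon})\leqslant C(M_1,T)\big(1+\|u_0\|^2_{\tilde{H}^{0,1}}\big)+C\int^t_0\theta_\varepsilon(s)\,G_{u_\varepsilon}(s\wedge\tau_{M_1,\varepsilon})ds+C\sqrt{\varepsilon}\,M^*_{t\wedge\tau_{M_1,\varepsilon}},$$
where $M_t=\int^t_0\langle u_\varepsilon(s),\sigma(\varepsilon s,u_\varepsilon(s))dW(s)\rangle_{\tilde{H}^{0,1}}$ and $\theta_\varepsilon(s)=1+\varepsilon\|\partial_1 u_\varepsilon(s)\|^2_H\,1_{\{s\leqslant\tau_{M_1,\varepsilon}\}}$ satisfies $\int^T_0\theta_\varepsilon(s)ds\leqslant T+M_1$ by definition of $\tau_{M_1,\varepsilon}$. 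Since $\theta_\varepsilon$ has a deterministic $L^1([0,T])$-bound and $t\mapsto M^*_{t\wedge\tau_{M_1,\varepsilon}}$ is nondecreasing, the pathwise Gronwall inequality yields
$$G_{u_\varepsilon}(t\wedge\tau_{M_1,\varepsilon})\leqslant e^{C(T+M_1)}\big(C(M_1,T)(1+\|u_0\|^2_{\tilde{H}^{0,1}})+C\sqrt{\varepsilon}\,M^*_{t\wedge\tau_{M_1,\varepsilon}}\big).$$

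Now take $L^p$-norms. Using Lemma \ref{martingale lemma} together with the bound $\langle M\rangle_{t\wedge\tau_{M_1,\varepsilon}}\leqslant C\int^t_0(1+G_{u_\varepsilon}(s\wedge\tau_{M_1,\varepsilon})^2)ds$ (which follows from (A2) with $\tilde{K}_2=0$ and $\|u_\varepsilon(s)\|^2_{\tilde{H}^{0,1}}\leqslant G_{u_\varepsilon}(s\wedge\tau_{M_1,\varepsilon})$ on $[0,\tau_{M_1,\varepsilon}]$) and Minkowski's integral inequality in $L^{p/2}$ as in \cite[(3.12)]{XZ08}, we obtain, with $\Phi(t):=\big(E[G_{u_\varepsilon}(t\wedge\tau_{M_1,\varepsilon})^p]\big)^{1/p}$,
$$\Phi(t)^2\leqslant C(M_1,T)\Big[(1+\|u_0\|^2_{\tilde{H}^{0,1}})^2+\varepsilon p\int^t_0(1+\Phi(s)^2)ds\Big],$$
and Gronwall's inequality gives $\Phi(T)^2\leqslant C(M_1,T)\big[(1+\|u_0\|^2_{\tilde{H}^{0,1}})^2+\varepsilon p\big]e^{C(M_1,T)\varepsilon p}$. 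Finally, choosing $p=1/\varepsilon$ bounds the right-hand side by a constant $C(M_1,T,u_0)$ independent of $\varepsilon\in(0,1]$; since $\tau_{M_1,\varepsilon}\leqslant T$ and $G_{u_\varepsilon}$ is nondecreasing, $\big(E[G_{u_\varepsilon}(\tau_{M_1,\varepsilon})^{1/\varepsilon}]\big)^\varepsilon\leqslant C(M_1,T,u_0)$, so Chebyshev's inequality gives
$$\varepsilon\log P\big(G_{u_\varepsilon}(\tau_{M_1,\varepsilon})>M\big)\leqslant-\log M+\varepsilon\log E[G_{u_\varepsilon}(\tau_{M_1,\varepsilon})^{1/\varepsilon}]\leqslant-\log M+\log C(M_1,T,u_0);$$
taking the supremum over $\varepsilon\in(0,1]$ and letting $M\to\infty$ completes the proof.

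The main obstacle is exactly the nonlinear term: Lemma \ref{estimate for b with partial_2} forces a factor $\|\partial_1 u_\varepsilon\|^2_H$ which is only integrable, not bounded, in time, so without a cutoff a Gronwall argument would not close. Stopping at $\tau_{M_1,\varepsilon}$ converts $\varepsilon\|\partial_1 u_\varepsilon\|^2_H$ into a random weight $\theta_\varepsilon$ with a \emph{deterministic} $L^1([0,T])$-bound, after which the $p=1/\varepsilon$ exponential-moment estimate proceeds exactly along the lines of Lemma \ref{lemma1}; a secondary, purely technical point is justifying It\^o's formula in $\tilde{H}^{0,1}$ and the genuine-martingale property of $M$, which is handled by localizing with $\tau'_{M',\varepsilon}$ as above.
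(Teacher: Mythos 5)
Your proof is correct and takes essentially the same route as the paper: It\^o's formula for the $\tilde{H}^{0,1}$-norm, Lemma~\ref{estimate for b with partial_2} for the nonlinear term, (A2) with $\tilde{K}_2=0$, the $p^{1/2}$-martingale bound of Lemma~\ref{martingale lemma} with $p\sim 1/\varepsilon$, Gronwall and Chebyshev, with the stopping time supplying the deterministic bound on $\varepsilon\int_0^{\tau_{M_1,\varepsilon}}\|\partial_1 u_\varepsilon\|^2_H\,ds$. The only (cosmetic) difference is that you run a pathwise Gronwall with the weight $\theta_\varepsilon$, whereas the paper works with the exponentially weighted quantity $e^{-k\varepsilon\int_0^t(1+\|\partial_1u_\varepsilon(s)\|^2_H)ds}\|u_\varepsilon(t)\|^2_{\tilde{H}^{0,1}}$ and removes the weight at $\tau_{M_1,\varepsilon}$, which produces the same factor $e^{C(M_1+\varepsilon)}$.
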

\begin{proof}
Let $k$ be a positive constant and $f_\varepsilon(t)=1+\|\partial_1u_\varepsilon(t)\|^2_H$. Applying It\^o's formula to $e^{-k\varepsilon\int^t_0f_\varepsilon(s) ds}\|u_\varepsilon(t)\|^2_{\tilde{H}^{0,1}}$, we obtain

\begin{align*}
&e^{-k\varepsilon\int^t_0f_\varepsilon(s) ds}\|u_\varepsilon(t)\|^2_{\tilde{H}^{0,1}}+2\varepsilon\int^t_0e^{-k\varepsilon\int^s_0f_\varepsilon(r)dr}(\|\partial_1u_\varepsilon(s)\|^2_{H}+\|\partial_1\partial_2u_\varepsilon(s)\|^2_H)ds\\
=&\|u_0\|^2_{\tilde{H}^{0,1}}-k\varepsilon\int^t_0e^{-k\varepsilon\int^s_0f_\varepsilon(r)dr}f_\varepsilon(s)\| u_\varepsilon(s)\|^2_{\tilde{H}^{0,1}}ds\\
&-2\varepsilon\int^t_0e^{-k\varepsilon\int^s_0f_\varepsilon(r)dr}\langle\partial_2u_\varepsilon(s), \partial_2(u_\varepsilon\cdot\nabla u_\varepsilon)(s)\rangle ds\\
&+2\sqrt{\varepsilon}\int^t_0e^{-k\varepsilon\int^s_0f_\varepsilon(r)dr}\langle u_\varepsilon(s), \sigma(\varepsilon s, u_\varepsilon(s))dW(s)\rangle_{\tilde{H}^{0,1}}\\
&+\varepsilon\int^t_0e^{-k\varepsilon\int^s_0f_\varepsilon(r)dr}\|\sigma(\varepsilon s, u_\varepsilon(s))\|^2_{L_2(l^2, {\tilde{H}^{0,1}})}ds.
\end{align*}

The fourth and the fifth line can be dealt in the same way as in the proof of Lemma \ref{lemma1}.  For the third line, by Lemma \ref{estimate for b with partial_2}, we have
\begin{align*}
&|\langle\partial_2u_\varepsilon, \partial_2(u_\varepsilon\cdot\nabla u_\varepsilon)\rangle|
\leqslant \frac{1}{2}\|\partial_1\partial_2 u_\varepsilon\|_H^2+ C_1f_\varepsilon\|\partial_2u_\varepsilon\|_H^2,
\end{align*}
where $C_1$ is a constant.
Therefore by (A2) with $\tilde{K}_2=0$ we get
\begin{align*}
&e^{-k\varepsilon\int^t_0f_\varepsilon(s) ds}\|u_\varepsilon(t)\|^2_{\tilde{H}^{0,1}}+\varepsilon\int^t_0e^{-k\varepsilon\int^s_0f_\varepsilon(r)dr}\|u_\varepsilon(s)\|^2_{\tilde{H}^{1,1}}ds\\
\leqslant&\|u_0\|^2_{\tilde{H}^{0,1}}-k\varepsilon\int^t_0e^{-k\varepsilon\int^s_0f_\varepsilon(r)dr}f_\varepsilon(s)\|u_\varepsilon(s)\|^2_{\tilde{H}^{0,1}}ds\\
&+2C_1\varepsilon\int^t_0e^{-k\varepsilon\int^s_0f_\varepsilon(r)dr}f_\varepsilon(s)\|u_\varepsilon(s)\|^2_{\tilde{H}^{0,1}} ds\\
&+2\sqrt{\varepsilon}\int^t_0e^{-k\varepsilon\int^s_0f_\varepsilon(r)dr}\langle u_\varepsilon(s), \sigma(\varepsilon s, u_\varepsilon(s))dW(s)\rangle_{\tilde{H}^{0,1}}\\
&+\varepsilon\int^t_0e^{-k\varepsilon\int^s_0f_\varepsilon(r)dr}[\tilde{K_0}+(\tilde{K}_1+1)\|u_\varepsilon(s)\|^2_{\tilde{H}^{0,1}}]ds.
\end{align*}

For the last second line, similar to \cite[(3.12)]{XZ08}, we have

\begin{align*}
&2\sqrt{\varepsilon}(E[\sup_{0\leqslant s\leqslant t}|\int^s_0e^{-k\varepsilon\int^r_0f_\varepsilon(l)dl}\langle u_\varepsilon(r), \sigma(\varepsilon r, u_\varepsilon(r))dW(r)\rangle_{\tilde{H}^{0,1}}|]^p)^\frac{1}{p}\\
\leqslant& C\sqrt{\varepsilon p} (E[\int^t_0e^{-2k\varepsilon\int^r_0f_\varepsilon(l)dl}\| u_\varepsilon(r)\|_{\tilde{H}^{0,1}}^2\|\sigma(\varepsilon r, u_\varepsilon(r))\|^2_{L_2(l^2,\tilde{H}^{0,1})}dr]^\frac{p}{2})^\frac{1}{p}\\
\leqslant& C\sqrt{\varepsilon p} (E[\int^t_0e^{-2k\varepsilon\int^r_0f_\varepsilon(l)dl}\| u_\varepsilon(r)\|_{\tilde{H}^{0,1}}^2(1+\|u_\varepsilon(r)\|^2_{\tilde{H}^{0,1}})dr]^\frac{p}{2})^\frac{1}{p}\\
\leqslant& C\sqrt{\varepsilon p} (E[\int^t_0e^{-2k\varepsilon\int^r_0f_\varepsilon(l)dl}(1+\|u_\varepsilon(r)\|^4_{\tilde{H}^{0,1}})dr]^\frac{p}{2})^\frac{1}{p}\\
\leqslant& C\sqrt{\varepsilon p} \left[\int^t_01+(E[e^{-pk\varepsilon\int^r_0f_\varepsilon(l)dl}\|u_\varepsilon(s)\|^{2p}_{\tilde{H}^{0,1}}])^\frac{2}{p} ds\right]^\frac{1}{2},
\end{align*}
where we used (A2) with $K_2=0$ in the third line.

Let $k>2C_1$ and using Lemma \ref{martingale lemma}, we have for $p\geqslant 2$
\begin{align*}
&\left(E\left[\sup_{0\leqslant s\leqslant t\wedge\tau_{M_1,\varepsilon}}e^{-k\varepsilon\int^s_0f_\varepsilon(r)dr}\|u_\varepsilon(s)\|^2_{\tilde{H}^{0,1}}+\varepsilon\int^{t\wedge\tau_{M_1,\varepsilon}}_0e^{-k\varepsilon\int^s_0f_\varepsilon(r)dr}\|u_\varepsilon(s)\|^2_{\tilde{H}^{1,1}}ds\right]^p\right)^\frac{2}{p}\\
\leqslant&C(\|u_0\|^2_{\tilde{H}^{0,1}}+\varepsilon)^2+C\varepsilon^2\int^t_0\left(E\left[\sup_{0\leqslant r\leqslant s\wedge\tau_{M_1,\varepsilon}}e^{-k\varepsilon\int^r_0f_\varepsilon(l)dl}\|u_\varepsilon(r)\|^2_{\tilde{H}^{0,1}}\right]^p\right)^\frac{2}{p}ds\\
&+C\varepsilon p+C\varepsilon p\int^t_0\left(E\left[\sup_{0\leqslant r\leqslant s\wedge\tau_{M_1,\varepsilon}}e^{-k\varepsilon\int^r_0f_\varepsilon(l)dl}\|u_\varepsilon(r)\|^2_{\tilde{H}^{0,1}}\right]^p\right)^\frac{2}{p}ds.
\end{align*}

Applying Gronwall's inequality, we obtain
\begin{align*}
&\left(E\left[\sup_{0\leqslant t\leqslant \tau_{M_1,\varepsilon}}e^{-k\varepsilon\int^t_0f_\varepsilon(s)ds}\|u_\varepsilon(t)\|^2_{\tilde{H}^{0,1}}+\varepsilon\int^{\tau_{M_1,\varepsilon}}_0e^{-k\varepsilon\int^s_0f_\varepsilon(r)dr}\|u_\varepsilon(s)\|^2_{\tilde{H}^{1,1}}ds\right]^p\right)^\frac{2}{p}\\
\leqslant &C\left[\|u_0\|^4_{\tilde{H}^{0,1}}+\varepsilon^2+\varepsilon p\right]e^{C(\varepsilon^2+\varepsilon p)}.
\end{align*}

Hence by the definition of $\tau_{M_1,\varepsilon}$, we have
\begin{align*}
&\left(E\left[G_{u_\varepsilon}(\tau_{M_1,\varepsilon})\right]^p\right)^\frac{2}{p}\\
\leqslant& \left(E\left[\left(\sup_{0\leqslant t\leqslant \tau_{M_1,\varepsilon}}e^{-k\varepsilon\int^t_0f_\varepsilon(s)ds}\|u_\varepsilon(t)\|^2_{\tilde{H}^{0,1}}+\varepsilon\int^{\tau_{M_1,\varepsilon}}_0e^{-k\varepsilon\int^s_0f_\varepsilon(r)dr}\|u_\varepsilon(s)\|^2_{\tilde{H}^{1,1}}ds\right)^pe^{pk\varepsilon\int^t_0f_\varepsilon(s)ds}\right]\right)^\frac{2}{p}\\
\leqslant&e^{C(M_1+\varepsilon)}\left(E\left[\sup_{0\leqslant t\leqslant \tau_{M_1,\varepsilon}}e^{-k\varepsilon\int^t_0f_\varepsilon(s)ds}\|u_\varepsilon(t)\|^2_{\tilde{H}^{0,1}}+\varepsilon\int^{\tau_{M_1,\varepsilon}}_0e^{-k\varepsilon\int^s_0f_\varepsilon(r)dr}\|u_\varepsilon(s)\|^2_{\tilde{H}^{1,1}}ds\right]^p\right)^\frac{2}{p}\\
\leqslant &Ce^{C(M_1+\varepsilon)}\left[\|u_0\|^4_{\tilde{H}^{0,1}}+\varepsilon^2+\varepsilon p\right]e^{C(\varepsilon^2+\varepsilon p)}.
\end{align*}

Let $p=\frac{2}{\varepsilon}$, by Chebyshev's inequality, we have
\begin{align*}
&\varepsilon\log P(G_{u_\varepsilon}(\tau_{M_1,\varepsilon})>M)\\
\leqslant &\varepsilon\log\frac{E\left[G_{u_\varepsilon}(\tau_{M_1,\varepsilon})\right]^p}{M^p}\\
\leqslant&-2\log M+C+C(M_1+\varepsilon)+C(\varepsilon^2+\varepsilon p)+\log[\|u_0\|^4_{\tilde{H}^{0,1}}+\varepsilon^2+\varepsilon p].
\end{align*}

Take supremum over $\varepsilon$ and let $M\rightarrow \infty$, we finish the proof.
\end{proof}

Since $V$ is dense in ${\tilde{H}^{0,1}}$, there exists a sequence $\{u^n_0\}\subset V$ such that
$$\lim_{n\rightarrow +\infty}\|u^n_0-u_0\|_{\tilde{H}^{0,1}}=0.$$

Let $u_{n,\varepsilon}$ be the solution to (\ref{eq. for small time}) with  the initial data $u^n_{0}$. Similarly, let $v_{n,\varepsilon}$ be the solution to (\ref{eq for v}) with  the initial data $u^n_{0}$.

For $M>0$, define a random time (which is also a stopping time with respect to $\mathcal{F}_{t+}$ by Lemma \ref{stopping time})
$$\tau^n_{M,\varepsilon}:=T\wedge\inf\{t:\|u_{n,\varepsilon}(t)\|^2_{H}>M,\text{ or }\varepsilon\int^t_0\|\partial_1 u_{n,\varepsilon}(s)\|^2_{H}ds>M\}.$$

From the proof of Lemma \ref{lemma1} and Lemma \ref{lemma2}, it follows that 

\begin{lemma}\label{lemma3}
$$\lim_{M\rightarrow \infty}\sup_n\sup_{0<\varepsilon\leqslant 1}\varepsilon\log P(F_{u_{n,\varepsilon}}(T)>M)=-\infty.$$

For fixed $M_1$, we have 
$$\lim_{M\rightarrow \infty}\sup_n\sup_{0<\varepsilon\leqslant 1}\varepsilon\log P(G_{u_{n,\varepsilon}}(\tau_{M_1,\varepsilon}^n)>M)=-\infty.$$
\end{lemma}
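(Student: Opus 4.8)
The plan is to repeat verbatim the arguments of Lemma \ref{lemma1} and Lemma \ref{lemma2}, tracking carefully that every constant appearing in the final bounds depends on the initial data only through $\|u_0\|_{\tilde H^{0,1}}$ (or, in the present situation, through $\|u_0^n\|_{\tilde H^{0,1}}$), and then exploiting the fact that $\sup_n\|u_0^n\|_{\tilde H^{0,1}}<\infty$ since $u_0^n\to u_0$ in $\tilde H^{0,1}$.

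\textbf{First part.} I would apply It\^o's formula to $\|u_{n,\varepsilon}(t)\|_H^2$ exactly as in the proof of Lemma \ref{lemma1}, using $b(u_{n,\varepsilon},u_{n,\varepsilon},u_{n,\varepsilon})=0$ and (A1) with $K_2=0$. Taking the supremum in $t$, raising to the $p$-th power, applying Lemma \ref{martingale lemma} to the stochastic term, and then Gronwall, one obtains exactly
$$\left(E[F_{u_{n,\varepsilon}}(T)]^p\right)^{\frac 2p}\leqslant C\left[\|u_0^n\|^4_{\tilde H^{0,1}}+\varepsilon^2+\varepsilon p\right]e^{C\varepsilon^2+C\varepsilon p},$$
with a constant $C$ that is \emph{independent of $n$} (it depends only on $T$, $K_0$, $K_1$ and the universal martingale constant). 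Setting $p=\tfrac1\varepsilon$ and using Chebyshev's inequality gives
$$\varepsilon\log P(F_{u_{n,\varepsilon}}(T)>M)\leqslant-\log M+\log\sqrt{\|u_0^n\|^4_{\tilde H^{0,1}}+\varepsilon^2+1}+C(\varepsilon^2+1).$$
Now $\sup_n\|u_0^n\|_{\tilde H^{0,1}}=:C_0<\infty$, so the right-hand side is bounded above by $-\log M+\log\sqrt{C_0^4+\varepsilon^2+1}+C(\varepsilon^2+1)$ uniformly in $n$ and in $\varepsilon\in(0,1]$; letting $M\to\infty$ yields the first claim.

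\textbf{Second part.} Similarly, I would run the proof of Lemma \ref{lemma2} with $u_{n,\varepsilon}$ in place of $u_\varepsilon$ and with the stopping time $\tau^n_{M_1,\varepsilon}$ in place of $\tau_{M_1,\varepsilon}$: apply It\^o's formula to $e^{-k\varepsilon\int_0^t f_\varepsilon^n(s)ds}\|u_{n,\varepsilon}(t)\|^2_{\tilde H^{0,1}}$ with $f_\varepsilon^n(t)=1+\|\partial_1 u_{n,\varepsilon}(t)\|_H^2$, bound the nonlinear term by Lemma \ref{estimate for b with partial_2} (absorbing $\tfrac12\|\partial_1\partial_2 u_{n,\varepsilon}\|_H^2$ on the left), choose $k>2C_1$ so that the $f_\varepsilon^n\|u_{n,\varepsilon}\|^2_{\tilde H^{0,1}}$ terms cancel, control the martingale term via Lemma \ref{martingale lemma} and (A2) with $\tilde K_2=0$, and apply Gronwall. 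This gives, with $C$ independent of $n$,
$$\left(E[G_{u_{n,\varepsilon}}(\tau^n_{M_1,\varepsilon})]^p\right)^{\frac 2p}\leqslant Ce^{C(M_1+\varepsilon)}\left[\|u_0^n\|^4_{\tilde H^{0,1}}+\varepsilon^2+\varepsilon p\right]e^{C(\varepsilon^2+\varepsilon p)},$$
where the factor $e^{C(M_1+\varepsilon)}$ comes from converting $e^{-k\varepsilon\int_0^t f_\varepsilon^n}\|u_{n,\varepsilon}\|^2_{\tilde H^{0,1}}$ back to $G_{u_{n,\varepsilon}}$ on $[0,\tau^n_{M_1,\varepsilon}]$, using that $k\varepsilon\int_0^{\tau^n_{M_1,\varepsilon}}f_\varepsilon^n\leqslant k(T+M_1)$ by definition of $\tau^n_{M_1,\varepsilon}$. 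Choosing $p=\tfrac2\varepsilon$ and using Chebyshev gives
$$\varepsilon\log P(G_{u_{n,\varepsilon}}(\tau^n_{M_1,\varepsilon})>M)\leqslant-2\log M+C+C(M_1+\varepsilon)+C(\varepsilon^2+\varepsilon p)+\log[\|u_0^n\|^4_{\tilde H^{0,1}}+\varepsilon^2+\varepsilon p],$$
and again $\|u_0^n\|_{\tilde H^{0,1}}\leqslant C_0$ uniformly, so the right-hand side is bounded above uniformly in $n$ and $\varepsilon\in(0,1]$; taking $\sup_n\sup_{0<\varepsilon\leqslant1}$ and letting $M\to\infty$ finishes the proof.

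\textbf{Main obstacle.} There is no genuine analytic difficulty here — the only thing to be careful about is the \emph{bookkeeping of constants}: one must verify that at every step (the applications of (A1), (A2), Young's inequality, Lemma \ref{martingale lemma}, Gronwall, and in particular the exponential conversion factor $e^{C(M_1+\varepsilon)}$ in the second estimate) the constants depend on the initial condition \emph{only} through $\|u_0^n\|_{\tilde H^{0,1}}$ and not on $n$ in any other way, so that the uniform bound $\sup_n\|u_0^n\|_{\tilde H^{0,1}}<\infty$ can be invoked at the end. Since the proofs of Lemmas \ref{lemma1} and \ref{lemma2} already produce bounds of exactly this form, the statement follows immediately, which is why it is phrased as ``it follows that.''
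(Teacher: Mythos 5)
Your proposal is correct and is exactly the argument the paper intends: the paper gives no separate proof of this lemma, asserting it follows from the proofs of Lemmas \ref{lemma1} and \ref{lemma2}, and your verification that all constants depend on the initial data only through $\|u^n_0\|_{\tilde H^{0,1}}$ (which is bounded uniformly in $n$ since $u^n_0\to u_0$ in $\tilde H^{0,1}$) is precisely the bookkeeping needed to make that assertion rigorous.
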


 \vskip.10in

The following lemma for $v_{n,\varepsilon}$ is from  \cite{XZ08}:

\begin{lemma}[{\cite[Lemma 3.2]{XZ08}}]\label{lemma4}
$$\lim_{M\rightarrow\infty}\sup_{0<\varepsilon\leqslant 1}\varepsilon\log P\left(\sup_{0\leqslant t\leqslant T}\|v_{n,\varepsilon}(t)\|^2_{V}>M\right)=-\infty.$$
\end{lemma}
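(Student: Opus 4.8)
The plan is to mirror the proof of Lemma \ref{lemma1}, but carried out in the space $V$ instead of $H$, taking advantage of the fact that $v_{n,\varepsilon}$ solves a purely diffusive equation, with no nonlinear term $B$ and no anisotropic viscosity $\partial_1^2$. Since $u^n_0\in V$, assumption (A4) says $\sigma(\varepsilon\,\cdot\,,\cdot)$ maps $V$-valued processes into $L_2(l^2,V)$-valued ones with at most quadratic growth, so (by a Galerkin approximation together with the $H$-Lipschitz property coming from (A3'), or simply by importing \cite[Lemma 3.2]{XZ08}) equation (\ref{eq for v}) with initial datum $u^n_0$ has a solution $v_{n,\varepsilon}\in C([0,T],V)$ a.s., and It\^o's formula may legitimately be applied to $\|v_{n,\varepsilon}(t)\|_V^2$.

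First I would apply It\^o's formula; as there is no drift term,
\[
\|v_{n,\varepsilon}(t)\|_V^2=\|u^n_0\|_V^2+2\sqrt{\varepsilon}\int_0^t\langle v_{n,\varepsilon}(s),\sigma(\varepsilon s,v_{n,\varepsilon}(s))dW(s)\rangle_V+\varepsilon\int_0^t\|\sigma(\varepsilon s,v_{n,\varepsilon}(s))\|_{L_2(l^2,V)}^2ds,
\]
and by (A4) the Lebesgue term is at most $\varepsilon(\overline{K}_0T+\overline{K}_1\int_0^t\|v_{n,\varepsilon}(s)\|_V^2ds)$. Writing $\Phi(t)=\sup_{0\leqslant s\leqslant t}\|v_{n,\varepsilon}(s)\|_V^2$, taking the supremum over $t$, and then the $L^p$-norm with $p\geqslant 2$, the martingale term is handled by Lemma \ref{martingale lemma}: its quadratic variation is bounded by $\int_0^T\|v_{n,\varepsilon}\|_V^2\|\sigma(\varepsilon s,v_{n,\varepsilon})\|_{L_2(l^2,V)}^2ds\leqslant C\int_0^T(1+\Phi(s)^2)ds$ (again by (A4) and Young's inequality), so that, exactly as in \cite[(3.12)]{XZ08},
\[
2\sqrt{\varepsilon}\Big(E\big[\sup_{0\leqslant s\leqslant T}\big|\int_0^s\langle v_{n,\varepsilon}(r),\sigma(\varepsilon r,v_{n,\varepsilon}(r))dW(r)\rangle_V\big|\big]^p\Big)^{1/p}\leqslant C\sqrt{\varepsilon p}\Big[T+\int_0^T\big(E\Phi(s)^p\big)^{2/p}ds\Big]^{1/2}.
\]
Combining these bounds, squaring, and using Minkowski's and Cauchy--Schwarz inequalities, one gets, with $\psi(t)=(E\Phi(t)^p)^{2/p}$,
\[
\psi(T)\leqslant C\big(\|u^n_0\|_V^4+\varepsilon^2+\varepsilon p\big)+C(\varepsilon^2+\varepsilon p)\int_0^T\psi(s)ds,
\]
whence Gronwall's inequality gives $\psi(T)\leqslant C(\|u^n_0\|_V^4+\varepsilon^2+\varepsilon p)e^{C(\varepsilon^2+\varepsilon p)T}$. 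Choosing $p=\frac1\varepsilon$ makes $\varepsilon p=1$ and $\varepsilon^2\leqslant1$, so every constant becomes independent of $\varepsilon$ and $\varepsilon\log E[\Phi(T)^{1/\varepsilon}]=\frac12\log\psi(T)\leqslant C_n$, a constant depending only on $n$, $T$ and $\|u^n_0\|_V$. Finally, Chebyshev's inequality yields
\[
\varepsilon\log P\big(\Phi(T)>M\big)\leqslant-\log M+\varepsilon\log E[\Phi(T)^{1/\varepsilon}]\leqslant-\log M+C_n,
\]
and taking the supremum over $\varepsilon\in(0,1]$ and then letting $M\to\infty$ concludes the proof.

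There is no serious analytic difficulty here, precisely because $v_{n,\varepsilon}$ obeys a plain additive-type SDE rather than the full stochastic Navier--Stokes system; the only points that require care are (i) justifying that $v_{n,\varepsilon}$ is genuinely $V$-valued with continuous paths, so that the $V$-It\^o formula applies --- this is where (A4) and $u^n_0\in V$ enter, and, as the text indicates, it may simply be quoted from \cite[Lemma 3.2]{XZ08} --- and (ii) keeping track of the joint $\varepsilon$- and $p$-dependence of the constants accurately enough that the substitution $p=\frac1\varepsilon$ produces a bound uniform in $\varepsilon$.
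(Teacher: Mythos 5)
Your proposal is correct and follows essentially the same route as the paper: the paper quotes this result from \cite[Lemma 3.2]{XZ08}, and the argument behind it (also sketched in the source) is exactly your scheme — It\^o's formula for $\|v_{n,\varepsilon}\|_V^2$ (no drift), assumption (A4) for the Lebesgue and quadratic-variation terms, the martingale moment bound of Lemma \ref{martingale lemma}, Gronwall, and Chebyshev with $p=\tfrac{1}{\varepsilon}$. The only point worth noting is that the $V$-valued continuity of $v_{n,\varepsilon}$, which you flag as item (i), is indeed handled in the paper exactly as you suggest (via (A4), $u_0^n\in V$, and Kolmogorov's criterion, cf.\ the proof of Lemma \ref{lemma7}).
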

 \vskip.10in
\iffalse
\begin{proof}
Applying It\^o's formula to $\|v_{n,\varepsilon}\|^2_V$, we obtain 
\begin{align*}
\|v_{n,\varepsilon}(t)\|^2_V=\|u_{n,0}\|^2_V+2\sqrt{\varepsilon}\int^t_0\langle v_{n,\varepsilon}, \sigma(\varepsilon s, v_{n,\varepsilon}(s))dW(s)\rangle_{V}+\varepsilon\int^t_0\|\sigma(\varepsilon s, v_{n,\varepsilon}(s))\|^2_{L_2(l^2,V)}ds.
\end{align*}

Then by (A4) and Lemma \ref{martingale lemma}, we have
\begin{align*}
\left(E[\sup_{0\leqslant t\leqslant r}\|v_{n,\varepsilon}(t)\|^{2p}_V]\right)^\frac{2}{p}\leqslant &2\|u_{n,0}\|^4_V+8C\varepsilon p\left(E\left[\int^r_0\|v_{n,\varepsilon}(s)\|^2_V\|\sigma(\varepsilon s, v_{n,\varepsilon}(s))\|^2_{L_2(l^2, V)}ds\right]^\frac{p}{2}\right)^\frac{2}{p}\\
&+4\varepsilon^2Cr\left(r+\int^r_0\left(E[\sup_{0\leqslant l\leqslant s}\|v_{n,\varepsilon}(l)\|_V^{2p}]\right)^\frac{2}{p} ds\right)\\
\leqslant&2\|u_{n,0}\|^4_V+C\varepsilon p\left(r+\int^r_0\left(E[\sup_{0\leqslant l\leqslant s}\|v_{n,\varepsilon}(l)\|_V^{2p}]\right)^\frac{2}{p} ds\right)\\
&+C\varepsilon^2r\left(r+\int^r_0\left(E[\sup_{0\leqslant l\leqslant s}\|v_{n,\varepsilon}(l)\|_V^{2p}]\right)^\frac{2}{p} ds\right).
\end{align*}

By Gronwall's inequality, we have
$$\left(E[\sup_{0\leqslant t\leqslant T}\|v_{n,\varepsilon}(t)\|^{2p}_V]\right)^{\frac{2}{p}}\leqslant C(\|u_{n,0}\|_V^4+\varepsilon p+\varepsilon^2)e^{C(\varepsilon p+\varepsilon^2)}.$$

The rest of proof is the same as that  of Lemma \ref{lemma1} which follows from  Chebyshev's inequality.

\end{proof}
\fi
\begin{lemma}\label{lemma5}
For any $\delta>0$, 
$$\lim_{n\rightarrow\infty}\sup_{0<\varepsilon\leqslant 1}\varepsilon\log P\left(\sup_{0\leqslant t\leqslant T}\|u_{n,\varepsilon}(t)-u_\varepsilon(t)\|_H^2>\delta\right)=-\infty.$$
\end{lemma}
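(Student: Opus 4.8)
The plan is to estimate the difference $w_{n,\varepsilon}:=u_{n,\varepsilon}-u_\varepsilon$ in the $H$-norm by Itô's formula, localized on the events where the $F$- and $G$-functionals of both $u_{n,\varepsilon}$ and $u_\varepsilon$ stay bounded, and then combine the resulting exponential bound with the large-deviation-type bounds of Lemmas \ref{lemma1}, \ref{lemma2} and \ref{lemma3}. Precisely, fix $M>0$ and set
$$\rho_{M,\varepsilon}:=\tau_{M,\varepsilon}\wedge\tau^n_{M,\varepsilon}\wedge\tau'_{M,\varepsilon}\wedge(\tau^n_{M,\varepsilon})',$$
a stopping time with respect to $\mathcal F_{t+}$ by Lemma \ref{stopping time}, where the primed times are the $\tilde H^{0,1}$-versions. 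On $[0,\rho_{M,\varepsilon}]$ we have $\|u_\varepsilon\|_H^2,\|u_{n,\varepsilon}\|_H^2\le M$, $\varepsilon\int_0^{\cdot}\|\partial_1 u_\varepsilon\|_H^2\le M$, and likewise $\varepsilon\int_0^{\cdot}\|u_\varepsilon\|_{\tilde H^{1,1}}^2\le M$ (same for $u_{n,\varepsilon}$). First I would write the equation for $w_{n,\varepsilon}$: its drift is $\varepsilon\partial_1^2 w_{n,\varepsilon}-\varepsilon(B(u_{n,\varepsilon})-B(u_\varepsilon))$ and its diffusion is $\sqrt{\varepsilon}(\sigma(\varepsilon t,u_{n,\varepsilon})-\sigma(\varepsilon t,u_\varepsilon))$, with initial datum $u_0^n-u_0$. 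Applying Itô's formula to $e^{-q_\varepsilon(t)}\|w_{n,\varepsilon}(t)\|_H^2$ with $q_\varepsilon(t)=k\varepsilon\int_0^t(1+\|u_\varepsilon(s)\|_{\tilde H^{1,1}}^2)\,ds$ for suitable $k$, and using the bilinear identity $b(w,u_\varepsilon,w)=-b(w,w,u_\varepsilon)$ together with Lemma \ref{anisotropic estimate for b} to absorb the $\frac{1}{5}\varepsilon\|\partial_1 w\|_H^2$ term, plus (A3) with $L_2=0$ for the diffusion part, gives after taking $L^p$-moments and using Lemma \ref{martingale lemma}:
$$\Big(E\big[\sup_{t\le\rho_{M,\varepsilon}}e^{-q_\varepsilon(t)}\|w_{n,\varepsilon}(t)\|_H^2\big]^p\Big)^{2/p}\le C\|u_0^n-u_0\|_H^4\,e^{C(\varepsilon p+\varepsilon^2)(1+\text{something})},$$
where the Gronwall constant involves $\varepsilon\int_0^{\rho_{M,\varepsilon}}(1+\|u_\varepsilon\|_{\tilde H^{1,1}}^2)ds\le C(M+\varepsilon)$; multiplying back by $e^{q_\varepsilon}$ (bounded by $e^{C(M+\varepsilon)}$ on $[0,\rho_{M,\varepsilon}]$) yields
$$\Big(E\big[\sup_{t\le\rho_{M,\varepsilon}}\|w_{n,\varepsilon}(t)\|_H^2\big]^p\Big)^{2/p}\le C e^{C(M+\varepsilon)}\|u_0^n-u_0\|_H^4\,e^{C(\varepsilon p+\varepsilon^2)}.$$
Then, taking $p=1/\varepsilon$ and applying Chebyshev's inequality,
$$\varepsilon\log P\Big(\sup_{t\le\rho_{M,\varepsilon}}\|w_{n,\varepsilon}(t)\|_H^2>\delta\Big)\le -\tfrac12\log\delta+\tfrac12\log\big(Ce^{C(M+1)}\|u_0^n-u_0\|_{\tilde H^{0,1}}^4\big)+C\xrightarrow[n\to\infty]{}-\infty$$
uniformly in $\varepsilon\le 1$, for each fixed $M$.

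To remove the localization I would split
$$P\Big(\sup_{t\le T}\|w_{n,\varepsilon}(t)\|_H^2>\delta\Big)\le P\Big(\sup_{t\le\rho_{M,\varepsilon}}\|w_{n,\varepsilon}(t)\|_H^2>\delta\Big)+P(\rho_{M,\varepsilon}<T),$$
and use $\varepsilon\log(a_\varepsilon+b_\varepsilon)\le \log 2\cdot\varepsilon+\max(\varepsilon\log a_\varepsilon,\varepsilon\log b_\varepsilon)$. The second probability is controlled, via
$$P(\rho_{M,\varepsilon}<T)\le P(F_{u_\varepsilon}(T)>M)+P(F_{u_{n,\varepsilon}}(T)>M)+P(G_{u_\varepsilon}(\tau_{M,\varepsilon})>M)+P(G_{u_{n,\varepsilon}}(\tau^n_{M,\varepsilon})>M),$$
by Lemmas \ref{lemma1}, \ref{lemma2} and \ref{lemma3}, which give $\sup_n\sup_{\varepsilon\le1}\varepsilon\log P(\rho_{M,\varepsilon}<T)\to-\infty$ as $M\to\infty$. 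So: given any large constant $R$, first pick $M$ so large that $\sup_n\sup_\varepsilon\varepsilon\log P(\rho_{M,\varepsilon}<T)<-R$; then with that $M$ fixed, pick $N$ so large that for $n\ge N$ the localized probability bound above is $<-R$ for all $\varepsilon\le1$. This yields $\limsup_{n}\sup_{\varepsilon\le1}\varepsilon\log P(\sup_{t\le T}\|w_{n,\varepsilon}(t)\|_H^2>\delta)\le -R+\varepsilon_0$ for every $R$, hence the claim.

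The main obstacle I anticipate is the nonlinear term: on the localized interval one must produce a clean estimate $|b(w_{n,\varepsilon},u_\varepsilon,w_{n,\varepsilon})|\le \tilde\alpha\|\partial_1 w_{n,\varepsilon}\|_H^2+\tilde C(1+\|u_\varepsilon\|_{\tilde H^{1,1}}^2)\|w_{n,\varepsilon}\|_H^2$ with $\tilde\alpha<1$ (from Lemma \ref{anisotropic estimate for b}), so that the $\varepsilon\|\partial_1 w\|_H^2$ coming from dissipation genuinely dominates the $\varepsilon\tilde\alpha\|\partial_1 w\|_H^2$ from the nonlinearity after multiplying by $\varepsilon$ — this is exactly why the $\tilde H^{0,1}$ a priori control of $u_\varepsilon$ (Lemma \ref{lemma2}), and not merely the $H$-control, is needed in the definition of $\rho_{M,\varepsilon}$. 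A secondary technical point is bookkeeping the powers of $\varepsilon$: the factor $e^{C(M+\varepsilon)}$ from undoing the exponential weight is harmless because $M$ is fixed before sending $p=1/\varepsilon\to\infty$, and the $\|u_0^n-u_0\|_{\tilde H^{0,1}}^4$ prefactor is what drives the bound to $-\infty$ in $n$; one just has to make sure the Gronwall exponent stays of the form $C(\varepsilon p+\varepsilon^2)$ (bounded as $p=1/\varepsilon$) and does not secretly depend on $n$ or blow up with $M$ beyond the explicit $e^{CM}$.
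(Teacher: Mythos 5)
Your core estimate is exactly the paper's: localize by stopping times, apply It\^o's formula to $e^{-k\varepsilon\int_0^t(1+\|u_\varepsilon\|^2_{\tilde H^{1,1}})ds}\|u_{n,\varepsilon}-u_\varepsilon\|_H^2$, use the cancellation $b(u_{n,\varepsilon},u_{n,\varepsilon},w)-b(u_\varepsilon,u_\varepsilon,w)=b(w,u_\varepsilon,w)$ with Lemma \ref{anisotropic estimate for b}, the Lipschitz bound on $\sigma$, Lemma \ref{martingale lemma}, Gronwall, undo the weight on the localized interval, and take $p\sim 1/\varepsilon$ in Chebyshev so that $\|u_0^n-u_0\|^4_{\tilde H^{0,1}}$ drives the bound to $-\infty$. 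Two remarks on your localization, one cosmetic and one substantive. Cosmetic: since the drift-difference estimate only involves $1+\|u_\varepsilon\|^2_{\tilde H^{1,1}}$ (not $u_{n,\varepsilon}$), the stopping times $\tau^n_{M,\varepsilon}$ and $(\tau^n_{M,\varepsilon})'$ are superfluous; the paper localizes only with $\tau_{M_1,\varepsilon}\wedge\tau'_{M_2,\varepsilon}$ for $u_\varepsilon$ and never needs the uniform-in-$n$ bounds of Lemma \ref{lemma3} in this lemma.

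Substantive: your de-localization step uses a single threshold $M$ and asserts that $\sup_n\sup_{0<\varepsilon\le1}\varepsilon\log P(\rho_{M,\varepsilon}<T)\to-\infty$ as $M\to\infty$ ``by Lemmas \ref{lemma1}, \ref{lemma2}, \ref{lemma3}.'' That is not what those lemmas give: Lemma \ref{lemma2} (and its $n$-uniform version in Lemma \ref{lemma3}) controls $P(G_{u_\varepsilon}(\tau_{M_1,\varepsilon})>M_2)$ only for \emph{fixed} $M_1$ as $M_2\to\infty$, and the proof's bound is of the form $-2\log M_2+C M_1+C$, so the diagonal choice $M_1=M_2=M$ does not tend to $-\infty$ and the claimed statement is not a consequence of the cited lemmas. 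The order of quantifiers must be: given $R$, first choose $M_1$ by Lemma \ref{lemma1} so that $P(F_{u_\varepsilon}(T)>M_1)\le e^{-R/\varepsilon}$, then with that $M_1$ fixed choose $M_2$ by Lemma \ref{lemma2} so that $P(G_{u_\varepsilon}(\tau_{M_1,\varepsilon})>M_2)\le e^{-R/\varepsilon}$ (note also that the event $\{\tau'_{M_2,\varepsilon}<T\}$ must be intersected with $\{F_{u_\varepsilon}(T)\le M_1\}$ to be expressed through $G_{u_\varepsilon}(\tau_{M_1,\varepsilon})$, which is exactly the paper's decomposition \eqref{eq in lemma5}), and only then choose $N$ so that the localized probability, which carries the harmless factor $e^{C(M_2+\varepsilon)}$, is $\le e^{-R/\varepsilon}$ for $n\ge N$. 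With this two-threshold bookkeeping your argument coincides with the paper's proof; as written, the single-$M$ step is a genuine quantifier-order gap, though easily repaired.
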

\begin{proof}

Clearly, for $M_1, M_2>0$
\begin{equation}\label{eq in lemma5}\aligned
&P\left(\sup_{0\leqslant t\leqslant T}\|u_{n,\varepsilon}(t)-u_\varepsilon(t)\|_H^2>\delta\right)\\
\leqslant &P\left(\sup_{0\leqslant t\leqslant T}\|u_{n,\varepsilon}(t)-u_\varepsilon(t)\|_H^2>\delta, F_{u_\varepsilon}(T)\leqslant M_1, G_{u_\varepsilon}(T)\leqslant M_2\right)\\
&+P\left(F_{u_\varepsilon}(T)>M_1\right)+P\left(F_{u_\varepsilon}(T)\leqslant M_1, G_{u_\varepsilon}(T)>M_2\right)\\
\leqslant &P\left(\sup_{0\leqslant t\leqslant\tau_{M_1,\varepsilon}\wedge\tau'_{M_2,\varepsilon}}\|u_{n,\varepsilon}(t)-u_\varepsilon(t)\|_H^2>\delta\right)\\
&+P\left(F_{u_\varepsilon}(T)>M_1\right)+P\left(G_{u_\varepsilon}(\tau_{M_1,\varepsilon})>M_2\right),
\endaligned
\end{equation}
where $\tau_{M_1,\varepsilon}$ and $\tau'_{M_2,\varepsilon}$ are introduced in Lemma \ref{stopping time}.

 For the first term on the right hand of (\ref{eq in lemma5}), let $k$ be a positive constant and 
\begin{align*}
U_\varepsilon= 1+\|u_\varepsilon\|^2_{\tilde{H}^{1,1}}.
\end{align*}
 Applying It\^o's formula to $e^{-\varepsilon k\int^{t}_0U_\varepsilon(s) ds}\|u_\varepsilon(t)-u_{n,\varepsilon}(t)\|_H^2$, we get
\begin{align*}
&e^{-\varepsilon k\int^{t}_0U_\varepsilon(s) ds}\|u_\varepsilon(t)-u_{n,\varepsilon}(t)\|_H^2+2\varepsilon\int^t_0e^{-\varepsilon k\int^s_0U_\varepsilon(r) dr}\|\partial_1(u_\varepsilon(s)-u_{n,\varepsilon}(s))\|^2_Hds\\
=&\|u_0-u_{n,0}\|^2_H-k\varepsilon\int^t_0e^{-\varepsilon k\int^s_0U_\varepsilon(r) dr}U_\varepsilon(s)\|u_\varepsilon(s)-u_{n,\varepsilon}(s)\|^2_Hds\\
&-2\varepsilon\int^t_0e^{-\varepsilon k\int^s_0U_\varepsilon(r) dr}\left(b(u_\varepsilon,u_\varepsilon,u_\varepsilon-u_{n,\varepsilon})(s)-b(u_{n,\varepsilon},u_{n,\varepsilon},u_\varepsilon-u_{n,\varepsilon})(s)\right)ds\\
&+\varepsilon\int^t_0e^{-\varepsilon k\int^s_0U_\varepsilon(r) dr}\|\sigma(\varepsilon s, u_\varepsilon(s))-\sigma(\varepsilon s, u_{n,\varepsilon}(s))\|^2_{L_2(l^2, H)}ds\\
&+2\sqrt{\varepsilon}\int^t_0e^{-\varepsilon k\int^s_0U_\varepsilon(r) dr}\langle u_\varepsilon(s)-u_{n,\varepsilon}(s), (\sigma(\varepsilon s, u_\varepsilon(s))-\sigma(\varepsilon s, u_{n,\varepsilon}(s)))dW(s)\rangle.
\end{align*}

Notice that by the property of the trilinear form $b$ and Lemma \ref{anisotropic estimate for b}, we have
\begin{align*}
&|b(u_\varepsilon,u_\varepsilon,u_\varepsilon-u_{n,\varepsilon})-b(u_{n,\varepsilon},u_{n,\varepsilon},u_\varepsilon-u_{n,\varepsilon})|\\
=&|b(u_\varepsilon,u_\varepsilon,u_\varepsilon-u_{n,\varepsilon})-b(u_{n,\varepsilon},u_\varepsilon,u_\varepsilon-u_{n,\varepsilon})|\\
=&|b(u_\varepsilon-u_{n,\varepsilon},u_\varepsilon,u_\varepsilon-u_{n,\varepsilon})|\\
\leqslant& \frac{1}{2}\|\partial_1(u_\varepsilon-u_{n,\varepsilon})\|^2_H+C_1U_\varepsilon\|u_\varepsilon-u_{n,\varepsilon}\|^2_H,
\end{align*}
where $C_1$ is a constant.

Therefore, 
\begin{align*}
&e^{-\varepsilon k\int^{t}_0U_\varepsilon(s) ds}\|u_\varepsilon(t)-u_{n,\varepsilon}(t)\|_H^2\\
\leqslant& \|u_0-u_{n,0}\|^2_{\tilde{H}^{0,1}}-k\varepsilon\int^t_0e^{-\varepsilon k\int^s_0U_\varepsilon(r) dr}U_\varepsilon(s)\|u_\varepsilon(s)-u_{n,\varepsilon}(s)\|^2_Hds\\
&+2\varepsilon C_1 \int^t_0e^{-\varepsilon k\int^s_0U_\varepsilon(r) dr}U_\varepsilon(s)\|u_\varepsilon(s)-u_{n,\varepsilon}(s)\|^2_Hds\\
&+L\varepsilon\int^t_0e^{-\varepsilon k\int^s_0U_\varepsilon(r) dr}\|u_\varepsilon(s)-u_{n,\varepsilon}(s)\|^2_{H}ds\\
&+2\sqrt{\varepsilon}\int^t_0e^{-\varepsilon k\int^s_0U_\varepsilon(r) dr}\langle u_\varepsilon(s)-u_{n,\varepsilon}(s), (\sigma(\varepsilon s, u_\varepsilon(s))-\sigma(\varepsilon s, u_{n,\varepsilon}(s)))dW(s)\rangle,
\end{align*}
where we used (A3') in the forth line.

Choosing $k>2C_1$ and using Lemma \ref{martingale lemma} and (A3'), by the similar calculation as in the proof of Lemma \ref{lemma2} we have for $p\geqslant 2$
\begin{align*}
&\left(E\left[\sup_{0\leqslant s\leqslant t\wedge\tau_{M_1,\varepsilon}\wedge\tau'_{M_2,\varepsilon}}e^{-\varepsilon k\int^s_0U_\varepsilon(r) dr}\|u_\varepsilon(s)-u_{n,\varepsilon}(s)\|^2_{H}\right]^p\right)^\frac{2}{p}\\
\leqslant& 2\|u_0-u_{n,0}\|^4_{\tilde{H}^{0,1}}+C\varepsilon^2\int^t_0\left(E\left[\sup_{0\leqslant r\leqslant s\wedge\tau_{M_1,\varepsilon}\wedge\tau'_{M_2,\varepsilon}}e^{-\varepsilon k\int^r_0U_\varepsilon(l) dl}\|u_\varepsilon(r)-u_{n,\varepsilon}(r)\|^2_{H}\right]^p\right)^\frac{2}{p}ds\\
&+C\varepsilon p\int^t_0\left(E\left[\sup_{0\leqslant r\leqslant s\wedge\tau_{M_1,\varepsilon}\wedge\tau'_{M_2,\varepsilon}}e^{-\varepsilon k\int^r_0U_\varepsilon(l) dl}\|u_\varepsilon(r)-u_{n,\varepsilon}(r)\|^2_{H}\right]^p\right)^\frac{2}{p}ds.\\
\end{align*}

Applying Gronwall's inequality, we obtain
\begin{align*}
&\left(E\left[\sup_{0\leqslant s\leqslant t\wedge\tau_{M_1,\varepsilon}\wedge\tau'_{M_2,\varepsilon}}e^{-\varepsilon k\int^s_0U_\varepsilon(r) dr}\|u_\varepsilon(s)-u_{n,\varepsilon}(s)\|^2_{H}\right]^p\right)^\frac{2}{p}\leqslant C\|u_0-u_{n,0}\|^4_{\tilde{H}^{0,1}}e^{C(\varepsilon^2+\varepsilon p)}.
\end{align*}
\iffalse
Note that  by Young's inequality, we have
\begin{align*}
\varepsilon\int^{\tau_{M_1,\varepsilon}\wedge\tau'_{M_2,\varepsilon}}_0U_\varepsilon(s)ds&\leqslant C\varepsilon\int^{\tau_{M_1,\varepsilon}\wedge\tau'_{M_2,\varepsilon}}_0(1+\|\partial_1u_\varepsilon(s)\|^2_H+\|\partial_2u_\varepsilon(s)\|^2_H+\|\partial_1\partial_2u_\varepsilon(s)\|^2_H)ds.\\
&\leqslant C\varepsilon(1+\int^{\tau'_{M_2, \varepsilon}}_0\|u_\varepsilon(s)\|^2_{\tilde{H}^{1,1}}ds)\leqslant C(\varepsilon+M_2).
\end{align*}
\fi
Hence, by the definition of the stopping times, 
\begin{align*}
&\left(E\left[\sup_{0\leqslant s\leqslant \tau_{M_1,\varepsilon}\wedge\tau'_{M_2,\varepsilon}}\|u_\varepsilon(s)-u_{n,\varepsilon}(s)\|^2_{H}\right]^p\right)^\frac{2}{p}\\
\leqslant&\left(E\left[\big{(}\sup_{0\leqslant s\leqslant \tau_{M_1,\varepsilon}\wedge\tau'_{M_2,\varepsilon}}e^{-\varepsilon k\int^s_0U_\varepsilon(r) dr}\|u_\varepsilon(s)-u_{n,\varepsilon}(s)\|^2_{H}\big{)}^pe^{kp\varepsilon\int^{\tau_{M_1,\varepsilon}\wedge\tau'_{M_2,\varepsilon}}_0U_\varepsilon(s)ds}\right]\right)^\frac{2}{p}\\
\leqslant&e^{C(\varepsilon+M_2)k}\left(E\left[\sup_{0\leqslant s\leqslant \tau_{M_1,\varepsilon}\wedge\tau'_{M_2,\varepsilon}}e^{-\varepsilon k\int^s_0U_\varepsilon(r) dr}\|u_\varepsilon(s)-u_{n,\varepsilon}(s)\|^2_{H}\right]^p\right)^\frac{2}{p}\\
\leqslant&Ce^{C(\varepsilon+M_2)k}\|u_0-u_{n,0}\|^4_{\tilde{H}^{0,1}}e^{C(\varepsilon^2+\varepsilon p)}.
\end{align*}

Fix $M_1, M_2$, let $p=\frac{2}{\varepsilon}$, then Chebyshev's inequality implies that
\begin{align*}
&\sup_{0<\varepsilon\leqslant 1}\varepsilon\log P\left(\sup_{0\leqslant t\leqslant \tau_{M_1,\varepsilon}\wedge\tau'_{M_2,\varepsilon}}\|u_{n,\varepsilon}(t)-u_\varepsilon(t)\|_H^2>\delta\right)\\
\leqslant&\sup_{0<\varepsilon\leqslant 1}\varepsilon\log\frac{E\left[\sup_{0\leqslant t\leqslant \tau_{M_1,\varepsilon}\wedge\tau'_{M_2,\varepsilon}}\|u_{n,\varepsilon}(t)-u_\varepsilon(t)\|_H^{2p}\right]}{\delta^p}\\
\leqslant& C(\varepsilon+M_2)-2\log\delta+\log\|u_0-u_{n,0}\|^4_{\tilde{H}^{0,1}}+C(\varepsilon^2+\varepsilon p)+C\\
\rightarrow&-\infty, \text{        as }n\rightarrow \infty.
\end{align*}

By Lemma \ref{lemma1}, for any $R>0$, there exists a constant $M_1$ such that for any $\varepsilon\in(0,1]$, 
$$P\left(F_{u_\varepsilon}(T)>M_1\right)\leqslant e^{-\frac{R}{\varepsilon}}.$$

For such a $M_1$, by Lemma \ref{lemma2}, there exists a constant $M_2$ such that for any $\varepsilon\in(0,1]$,
$$P\left(G_{u_\varepsilon}(\tau_{M_1,\varepsilon})>M_2\right)\leqslant e^{-\frac{R}{\varepsilon}}.$$

For such $M_1,M_2$, there exists a positive integer $N$, such that for any $n\geqslant N$ and $\varepsilon\in(0,1]$,
$$P\left(\sup_{0\leqslant t\leqslant\tau_{M_1,\varepsilon}\wedge\tau'_{M_2,\varepsilon}}\|u_{n,\varepsilon}(t)-u_\varepsilon(t)\|_H^2>\delta\right)\leqslant e^{-\frac{R}{\varepsilon}}.$$

Then by (\ref{eq in lemma5}), we see that there exists a positive integer $N$, such that for any $n\geqslant N$, $\varepsilon\in(0,1]$,
$$P\left(\sup_{0\leqslant t\leqslant T}\|u_{n,\varepsilon}(t)-u_\varepsilon(t)\|_H^2>\delta\right)\leqslant 3e^{-\frac{R}{\varepsilon}}.$$

Since $R$ is arbitrary, the lemma follows.
\end{proof}

The following  lemma for $v_\varepsilon$ is from \cite{XZ08}:

\begin{lemma}[{\cite[Lemma 3.4]{XZ08}}]\label{lemma6}
For any $\delta>0$, 
$$\lim_{n\rightarrow\infty}\sup_{0<\varepsilon\leqslant 1}\varepsilon\log P\left(\sup_{0\leqslant t\leqslant T}\|v_{n,\varepsilon}(t)-v_\varepsilon(t)\|_H^2>\delta\right)=-\infty.$$
\end{lemma}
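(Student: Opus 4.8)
The plan is to follow \cite[Lemma 3.4]{XZ08} and reuse, essentially verbatim, the scheme of the proof of Lemma \ref{LDP for v}, which is considerably simpler in the present case because the equation \eqref{eq for v} carries no drift at all: $v_{n,\varepsilon}$ solves the very same equation as $v_\varepsilon$ but started from $u^n_{0}$ instead of $u_0$. Setting $w_{n,\varepsilon}:=v_{n,\varepsilon}-v_\varepsilon$, the difference obeys
$$w_{n,\varepsilon}(t)=u^n_{0}-u_0+\sqrt{\varepsilon}\int_0^t\big(\sigma(\varepsilon s,v_{n,\varepsilon}(s))-\sigma(\varepsilon s,v_\varepsilon(s))\big)\,dW(s),$$
so that applying It\^o's formula to $\|w_{n,\varepsilon}(t)\|_H^2$ gives
$$\|w_{n,\varepsilon}(t)\|_H^2=\|u^n_{0}-u_0\|_H^2+2\sqrt{\varepsilon}\int_0^t\big\langle w_{n,\varepsilon}(s),(\sigma(\varepsilon s,v_{n,\varepsilon}(s))-\sigma(\varepsilon s,v_\varepsilon(s)))\,dW(s)\big\rangle+\varepsilon\int_0^t\|\sigma(\varepsilon s,v_{n,\varepsilon}(s))-\sigma(\varepsilon s,v_\varepsilon(s))\|_{L_2(l^2,H)}^2\,ds.$$
Since both coefficients are evaluated at the common time $\varepsilon s$, the time-increment term $L_0|t-s|^\alpha$ in (A3') disappears and (A3') reduces to $\|\sigma(\varepsilon s,v_{n,\varepsilon}(s))-\sigma(\varepsilon s,v_\varepsilon(s))\|_{L_2(l^2,H)}^2\leqslant L_1\|w_{n,\varepsilon}(s)\|_H^2$; in particular the quadratic variation of the stochastic integral above is dominated by $L_1\int_0^t\|w_{n,\varepsilon}(s)\|_H^4\,ds$.

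Next I would take $2p$-th moments of $\sup_{0\leqslant t\leqslant T}\|w_{n,\varepsilon}(t)\|_H^2$ for $p\geqslant 2$, bounding the martingale term by Lemma \ref{martingale lemma} (which is where the explicit factor $p^{1/2}$ enters), then using Young's and H\"older's inequalities precisely as in \cite[(3.12)]{XZ08} and in the proof of Lemma \ref{LDP for v}, and finally Gronwall's inequality. This produces
$$\Big(E\big[\sup_{0\leqslant t\leqslant T}\|w_{n,\varepsilon}(t)\|_H^{2p}\big]\Big)^{2/p}\leqslant C\,\|u^n_{0}-u_0\|_H^4\,e^{C(\varepsilon p+\varepsilon^2)},$$
with $C$ independent of $n$, $\varepsilon$ and $p$; this is the exact analogue of the bound in Lemma \ref{LDP for v}, with the $\varepsilon^{1+2\alpha}$-contribution there replaced here by the initial-data error $\|u^n_{0}-u_0\|_H^4$.

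It then remains to convert this moment bound into the exponential estimate by Chebyshev's inequality: for $p\geqslant 2$,
$$\varepsilon\log P\Big(\sup_{0\leqslant t\leqslant T}\|w_{n,\varepsilon}(t)\|_H^2>\delta\Big)\leqslant\varepsilon\log E\big[\sup_{0\leqslant t\leqslant T}\|w_{n,\varepsilon}(t)\|_H^{2p}\big]-\varepsilon p\log\delta\leqslant\frac{\varepsilon p}{2}\Big(2C+C\varepsilon p+C\varepsilon^2+4\log\|u^n_{0}-u_0\|_H-2\log\delta\Big).$$
Taking $p=1/\varepsilon$ the right-hand side equals $\tfrac12\big(3C+C\varepsilon^2+4\log\|u^n_{0}-u_0\|_H-2\log\delta\big)$, which for fixed $\delta$ is bounded above, uniformly in $\varepsilon\in(0,1]$, by $C'+2\log\|u^n_{0}-u_0\|_H$. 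Since $\|u^n_{0}-u_0\|_H\leqslant\|u^n_{0}-u_0\|_{\tilde H^{0,1}}\to0$ as $n\to\infty$, we obtain $\sup_{0<\varepsilon\leqslant1}\varepsilon\log P\big(\sup_{0\leqslant t\leqslant T}\|w_{n,\varepsilon}(t)\|_H^2>\delta\big)\to-\infty$, which is the assertion. I do not anticipate a genuine obstacle here: in contrast with Lemma \ref{lemma5} there is neither the nonlinear term $B$ nor the anisotropic dissipation to control, so no stopping-time localization is needed, and the only care required is to keep track of the $p$-dependence of the constants so that the choice $p=1/\varepsilon$ yields a bound uniform in $\varepsilon$, exactly as was done in Lemma \ref{LDP for v}.
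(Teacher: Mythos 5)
Your proposal is correct and coincides with the argument behind the paper's treatment of this lemma: the paper simply quotes it from \cite[Lemma 3.4]{XZ08}, and the underlying proof is exactly your scheme — It\^o's formula for $\|v_{n,\varepsilon}-v_\varepsilon\|_H^2$ (no drift, equal time arguments so (A3') gives the pure Lipschitz bound $L_1\|w_{n,\varepsilon}\|_H^2$), the Barlow--Yor--Davis estimate of Lemma \ref{martingale lemma} for the $p$-dependence, Gronwall to get $\big(E[\sup_t\|w_{n,\varepsilon}(t)\|_H^{2p}]\big)^{2/p}\leqslant C\|u_0-u_0^n\|_{H}^4e^{C(\varepsilon p+\varepsilon^2)}$, and Chebyshev with $p\sim 1/\varepsilon$. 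The only cosmetic caveat is that for $\varepsilon$ close to $1$ one should take $p=2$ rather than $p=1/\varepsilon$ (to respect $p\geqslant 2$ in Lemma \ref{martingale lemma}), which does not affect the conclusion.
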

 \vskip.10in
\iffalse
\begin{proof}
The proof is similar but simple to that of Lemma \ref{lemma5}. Applying It\^o's formula to $\|v_\varepsilon(t)-v_{n,\varepsilon}(t)\|^2_H$, taking Lemma \ref{martingale lemma} and (A4) into consideration, we have
\begin{align*}
&\left(E\left[\sup_{0\leqslant s\leqslant t}\|v_\varepsilon(s)-v_{n,\varepsilon}(s)\|_H^{2p}\right]\right)^{\frac{2}{p}}\\
\leqslant& 2\|u_0-u_{n,0}\|^4_{\tilde{H}^{0,1}}+C(\varepsilon p+\varepsilon^2)\int^t_0\left(E\left[\sup_{0\leqslant r\leqslant s}\|v_\varepsilon(r)-v_{n,\varepsilon}(r)\|_H^{2p}\right]\right)^{\frac{2}{p}}ds.
\end{align*}

Applying Gronwall's inequality, 
$$\left(E\left[\sup_{0\leqslant s\leqslant t}\|v_\varepsilon(s)-v_{n,\varepsilon}(s)\|_H^{2p}\right]\right)^{\frac{2}{p}}\leqslant C\|u_0-u_{n,0}\|^4_{\tilde{H}^{0,1}}e^{C(\varepsilon p+\varepsilon^2)}.$$

Then the lemma follows from Chebyshev's inequality.

\end{proof}
\fi

\begin{lemma}\label{lemma7}
For any $\delta>0$, and every positive integer $n$,
$$\lim_{\varepsilon\rightarrow 0}\varepsilon\log P\left(\sup_{0\leqslant t\leqslant T}\|u_{n,\varepsilon}(t)-v_{n,\varepsilon}(t)\|^2_H>\delta\right)=-\infty.$$
\end{lemma}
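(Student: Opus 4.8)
The plan is to prove that $u_{n,\varepsilon}$ and $v_{n,\varepsilon}$ are exponentially equivalent, following the It\^o--stopping-time--moment scheme of Lemmas \ref{lemma2} and \ref{lemma5}; the new feature is that the nonlinear drift, although absent from \eqref{eq for v}, turns out to be of lower order in $\varepsilon$ once one localizes. Write $w=w_{n,\varepsilon}:=u_{n,\varepsilon}-v_{n,\varepsilon}$, so $w(0)=0$ and, by \eqref{eq. for small time} and \eqref{eq for v},
\begin{align*}
dw(t)=\varepsilon\partial_1^2u_{n,\varepsilon}(t)\,dt-\varepsilon B(u_{n,\varepsilon}(t))\,dt+\sqrt{\varepsilon}\,\big(\sigma(\varepsilon t,u_{n,\varepsilon}(t))-\sigma(\varepsilon t,v_{n,\varepsilon}(t))\big)\,dW(t).
\end{align*}
Fix $R>0$. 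I would first choose $M_1$ (by the first assertion of Lemma \ref{lemma3}), then $M_2$ (by the second assertion of Lemma \ref{lemma3}, with this $M_1$), then $M_3$ (by Lemma \ref{lemma4}) so that $P(F_{u_{n,\varepsilon}}(T)>M_1)$, $P(G_{u_{n,\varepsilon}}(\tau^n_{M_1,\varepsilon})>M_2)$ and $P(\sup_{0\le t\le T}\|v_{n,\varepsilon}(t)\|_V^2>M_3)$ are each at most $e^{-R/\varepsilon}$ for all $n$ and $\varepsilon\in(0,1]$. Besides $\tau^n_{M_1,\varepsilon}$ I introduce the $\mathcal{F}_{t+}$-stopping times
\begin{align*}
{\tau'}^n_{M_2,\varepsilon}:=&\,T\wedge\inf\{t:\ \|u_{n,\varepsilon}(t)\|^2_{\tilde H^{0,1}}>M_2\ \text{or}\ \varepsilon\int_0^t\|u_{n,\varepsilon}(s)\|^2_{\tilde H^{1,1}}ds>M_2\},\\
\rho^n_{M_3,\varepsilon}:=&\,T\wedge\inf\{t:\ \|v_{n,\varepsilon}(t)\|_V^2>M_3\}
\end{align*}
(these are stopping times by the argument of Lemma \ref{stopping time}, using weak continuity of $u_{n,\varepsilon}$ in $\tilde H^{0,1}$ and continuity of $v_{n,\varepsilon}$ in $V$), and set $\theta:=\tau^n_{M_1,\varepsilon}\wedge{\tau'}^n_{M_2,\varepsilon}\wedge\rho^n_{M_3,\varepsilon}$. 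Exactly as in the proof of Lemma \ref{lemma5} one then has
\begin{align*}
P\Big(\sup_{0\le t\le T}\|w(t)\|_H^2>\delta\Big)\le{}& P\Big(\sup_{0\le t\le\theta}\|w(t)\|_H^2>\delta\Big)+P(F_{u_{n,\varepsilon}}(T)>M_1)\\
&+P(G_{u_{n,\varepsilon}}(\tau^n_{M_1,\varepsilon})>M_2)+P\Big(\sup_{0\le t\le T}\|v_{n,\varepsilon}(t)\|_V^2>M_3\Big),
\end{align*}
so only the first term needs to be estimated.

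On $[0,\theta]$ I apply It\^o's formula to $\|w(t)\|_H^2$. The viscous term gives $2\varepsilon\langle w,\partial_1^2u_{n,\varepsilon}\rangle=-2\varepsilon\langle\partial_1 w,\partial_1u_{n,\varepsilon}\rangle\le -\varepsilon\|\partial_1u_{n,\varepsilon}\|_H^2+\varepsilon\|\partial_1v_{n,\varepsilon}\|_H^2$, whose integral over $[0,\theta]$ is $\le\varepsilon T M_3$. For the nonlinear term I use the cancellation $b(u_{n,\varepsilon},u_{n,\varepsilon},u_{n,\varepsilon})=0$ to rewrite $-2\varepsilon\langle w,B(u_{n,\varepsilon})\rangle=-2\varepsilon\,b(u_{n,\varepsilon},u_{n,\varepsilon},w)=2\varepsilon\,b(u_{n,\varepsilon},u_{n,\varepsilon},v_{n,\varepsilon})$; since $v_{n,\varepsilon}\in V$, $|b(u_{n,\varepsilon},u_{n,\varepsilon},v_{n,\varepsilon})|\le C\|u_{n,\varepsilon}\|_{L^4}^2\|v_{n,\varepsilon}\|_V$, and an anisotropic Ladyzhenskaya inequality bounds $\|u_{n,\varepsilon}\|_{L^4}^2$ by $C\|u_{n,\varepsilon}\|_H\|\partial_1u_{n,\varepsilon}\|_H^{1/2}\|\partial_2u_{n,\varepsilon}\|_H^{1/2}$, which on $[0,\theta]$ is $\le C M_1^{1/2}M_2^{1/4}\|\partial_1u_{n,\varepsilon}\|_H^{1/2}$. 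H\"older's inequality in time, together with the localization bound $\varepsilon\int_0^{\tau^n_{M_1,\varepsilon}}\|\partial_1u_{n,\varepsilon}\|_H^2ds\le M_1$, then yields
\begin{align*}
2\varepsilon\int_0^{\theta}|b(u_{n,\varepsilon},u_{n,\varepsilon},v_{n,\varepsilon})|\,ds\le C(M_1,M_2,M_3,T)\,\varepsilon^{3/4},
\end{align*}
a deterministic bound. The It\^o correction is $\le\varepsilon L_1\int_0^t\|w\|_H^2ds$ by (A3'), and the martingale term is controlled by Lemma \ref{martingale lemma} and (A3') as in the proof of Lemma \ref{lemma2}. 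Taking the supremum over $t\le T\wedge\theta$, the $p$-th moment ($p\ge2$), and Gronwall's inequality, I expect
\begin{align*}
\Big(E\big[\sup_{0\le t\le T\wedge\theta}\|w(t)\|_H^2\big]^p\Big)^{2/p}\le C(M_1,M_2,M_3,T)\,\varepsilon^{3/4}\,e^{C(\varepsilon^2+\varepsilon p)}.
\end{align*}
Choosing $p=2/\varepsilon$ and applying Chebyshev's inequality gives $\varepsilon\log P(\sup_{0\le t\le\theta}\|w(t)\|_H^2>\delta)\le \log C(M_1,M_2,M_3,T)+\tfrac34\log\varepsilon+C(\varepsilon^2+2)-2\log\delta\to-\infty$ as $\varepsilon\to0$, hence $P(\sup_{0\le t\le\theta}\|w(t)\|_H^2>\delta)\le e^{-R/\varepsilon}$ for $\varepsilon$ small. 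Combining the four pieces, $P(\sup_{0\le t\le T}\|w(t)\|_H^2>\delta)\le 4e^{-R/\varepsilon}$ for $\varepsilon$ small, so $\limsup_{\varepsilon\to0}\varepsilon\log P(\sup_{0\le t\le T}\|u_{n,\varepsilon}(t)-v_{n,\varepsilon}(t)\|_H^2>\delta)\le-R$; since $R$ is arbitrary, the limit is $-\infty$.

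The hard part is the nonlinear term. Unlike in Lemma \ref{lemma5}, where the difference of two solutions of the same equation produces an exact cancellation, here there is no drift in \eqref{eq for v}, so one must transfer the derivative onto the regular field $v_{n,\varepsilon}$ through $b(u_{n,\varepsilon},u_{n,\varepsilon},u_{n,\varepsilon})=0$ and then show that the factor $\varepsilon$ in front of $B(u_{n,\varepsilon})$ genuinely beats the only $\varepsilon$-weighted (hence $O(1)$) a priori bounds for $u_{n,\varepsilon}$. This forces the use of a Ladyzhenskaya-type estimate with a strictly sub-quadratic power of $\|\partial_1u_{n,\varepsilon}\|_H$ (the crude bound $|b(u,u,v)|\le C\|u\|^2_{\tilde H^{1,1}}\|v\|_H$ would only give an $O(1)$ contribution), and it is precisely here that the regularity $u_0^n\in V$ of the approximating data, assumption (A4), and the $H^{0,1}$-estimate controlling $\partial_2u_{n,\varepsilon}$ through ${\tau'}^n_{M_2,\varepsilon}$ all enter.
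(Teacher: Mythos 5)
Your proposal is correct, and its overall skeleton (localization by the stopping times $\tau^n_{M_1,\varepsilon}$, ${\tau'}^n_{M_2,\varepsilon}$, $\rho^n_{M_3,\varepsilon}$, the tail bounds from Lemmas \ref{lemma3} and \ref{lemma4}, It\^o's formula, the martingale moment bound of Lemma \ref{martingale lemma} with $p\sim 1/\varepsilon$, Gronwall, Chebyshev) is the same as the paper's; but your treatment of the decisive nonlinear term is genuinely different. The paper keeps the test function $w=u_{n,\varepsilon}-v_{n,\varepsilon}$, splits $b(u_{n,\varepsilon},u_{n,\varepsilon},w)=b(w,u_{n,\varepsilon},w)+b(v_{n,\varepsilon},u_{n,\varepsilon},w)$, and controls the first piece by Lemma \ref{anisotropic estimate for b}, which produces the term $C_1U_{n,\varepsilon}\|w\|^2_H$ with $U_{n,\varepsilon}=1+\|u_{n,\varepsilon}\|^2_{\tilde H^{1,1}}$; this forces the exponential weight $e^{-k\varepsilon\int_0^t U_{n,\varepsilon}ds}$ in the Gronwall argument, a factor $e^{C(\varepsilon+M_2)}$ when the weight is removed via ${\tau'}^n_{M_2,\varepsilon}$, and the exponential smallness then comes from the prefactor $CM_1^2\varepsilon^2$ generated by the drift term $\varepsilon\|v_{n,\varepsilon}\|_V^2$ (see \eqref{eq3 in lemma7}). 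You instead use the cancellation $b(u_{n,\varepsilon},u_{n,\varepsilon},u_{n,\varepsilon})=0$ to convert the whole nonlinear contribution into $2\varepsilon\,b(u_{n,\varepsilon},u_{n,\varepsilon},v_{n,\varepsilon})$ and bound it deterministically on $[0,\theta]$ by $C(M_1,M_2,M_3,T)\varepsilon^{3/4}$, using an anisotropic Ladyzhenskaya estimate together with $\varepsilon\int_0^{\tau^n_{M_1,\varepsilon}}\|\partial_1u_{n,\varepsilon}\|_H^2ds\le M_1$; this removes the need for the $U_{n,\varepsilon}$-weighted exponential in Gronwall (only the Lipschitz noise terms contribute $\|w\|^2_H$ on the right, with coefficients $C(\varepsilon^2+\varepsilon p)$), and the smallness $\log\varepsilon^{3/4}\to-\infty$ plays the role of the paper's $\log(CM_1^2\varepsilon^2)$. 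Two cosmetic points: on the torus the Ladyzhenskaya-type bound carries lower-order terms ($\|u\|^2_{L^4}\le C\|u\|_H(\|\partial_1u\|_H^{1/2}+\|u\|_H^{1/2})(\|\partial_2u\|_H^{1/2}+\|u\|_H^{1/2})$, obtainable from Lemmas \ref{anisotropic Lp spaces} and \ref{estimate for anisotropic spaces}), which are harmless under your localization; and after squaring to run Gronwall the prefactor should be $\varepsilon^{3/2}$ rather than $\varepsilon^{3/4}$ (correspondingly $\tfrac32\log\varepsilon$ in the Chebyshev step), which only helps. You should also make explicit, as the paper does, that $v_{n,\varepsilon}\in C([0,T],V)$ (proved via (A4), Kolmogorov's criterion and $u_0^n\in V$) so that $\rho^n_{M_3,\varepsilon}$ is indeed a stopping time.
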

\begin{proof}
For $M>0$, recall the definition of $\tau^n_{M, \varepsilon}$  and define the following random time:
$$\tau^{2,n}_{M,\varepsilon}:= T\wedge\inf\{t: \|u_{n,\varepsilon}(t)\|^2_{\tilde{H}^{0,1}}>M,\text{ or }\varepsilon\int^t_0\|u_{n,\varepsilon}(s)\|^2_{\tilde{H}^{1,1}}ds>M\},$$
which is a stopping time with respect to $\mathcal{F}_{t+}$ by Lemma \ref{stopping time}.

Moreover, define
$$\tau^{3,n}_{M,\varepsilon}:= T\wedge\inf\{t: \|v_{n,\varepsilon}(t)\|^2_{V}>M\},$$
$$\tau^{1,n}_{M,\varepsilon}:=\tau^n_{M,\varepsilon}\wedge\tau^{3,n}_{M,\varepsilon}.$$
We should point out that $\tau^{3,n}_{M,\varepsilon}$ is a stopping time with respect to $\mathcal{F}_t$ under the condition $v_{n,\varepsilon}\in C([0,T], V)$. Now we prove that $v_{n,\varepsilon}\in C([0,T], V)$.

\iffalse
Let $\{e_k\}_{k\in\mathbb{Z}, k\geqslant 1}\subset H$ be a smooth  orthogonal basis of $V$. For any  $f\in H$, we can define $\langle f, e_k\rangle_V:=\langle f, (I-\Delta) e_k\rangle_H$. Set $\varphi_k=\langle v_{n,\varepsilon}, e_k\rangle_V$,  by It\^o's formula, for $t\in[0, T]$, we have
\begin{align*}
\varphi^2_k(t)=\varphi_k^2(0)+\int^t_0 2\sqrt{\varepsilon}\varphi_k(s)\langle \sigma(\varepsilon s, v_{n,\varepsilon}(s))dW(s),e_k\rangle_V+\int^t_0\varepsilon\|\sigma(\varepsilon s, v_{n,\varepsilon}(s))^*e_k\|_{l^2}^2ds,
\end{align*}
where $\sigma^*$ denote the adjoint operator of $\sigma$. Summing over $k$, by (A3') and  Burkh\"older-Davis-Gundy's inequality, we deduce that
\begin{align*}
E(\sup_{s\in[0,t]}\|v_{n,\varepsilon}(s)\|_V^2)\leqslant C(\sqrt{\varepsilon}+\varepsilon)\int^t_0(1+E(\sup_{r\in[0, s]}\|v_{n,\varepsilon}(r)\|_V^2))ds.
\end{align*}
\fi
By It\^o's formula and  Gronwall's inequality there exists a constant $C(\varepsilon)$ such that
\begin{align*}
E(\sup_{s\in[0,t]}\|v_{n,\varepsilon}(s)\|_V^2)\leqslant C(\varepsilon).
\end{align*}
For $0\leqslant s<t\leqslant T$, by (A4) we have
\begin{align*}
E\|v_{n,\varepsilon}(t)-v_{n,\varepsilon}(s)\|_V^2\leqslant& \varepsilon E\int^t_s\|\sigma(\varepsilon r, v_{n,\varepsilon}(r))\|^2_{L_2(l^2,V)}dr\\
\leqslant &\varepsilon \int^t_s(\overline{K}_0+\overline{K}_1E(\sup_{l\in[0,r]}\|v_{n,\varepsilon}(l)\|_V^2))dr\\
\leqslant& \varepsilon(\overline{K}_0+\overline{K}_1C(\varepsilon))|t-s|.
\end{align*}
Then Kolmogorov's continuity criterion implies that $v_{n,\varepsilon}\in C([0,T], V)$.

Now for $M_1, M_2>0$, similarly to (\ref{eq in lemma5}), we have 
\begin{equation}\label{eq in lemma7}\aligned
&P\left(\sup_{0\leqslant t\leqslant T}\|u_{n,\varepsilon}(t)-v_{n,\varepsilon}(t)\|^2_H>\delta\right)\\\leqslant&P\left(\sup_{0\leqslant t\leqslant \tau^{1,n}_{M_1,\varepsilon}\wedge\tau^{2,n}_{M_2,\varepsilon}}\|u_{n,\varepsilon}(t)-v_{n,\varepsilon}(t)\|^2_H>\delta\right)\\
&+P(F_{u_{n,\varepsilon}}(T)>M_1)+P(G_{u_{n,\varepsilon}}(\tau^n_{M_1,\varepsilon})>M_2)+P\left(\sup_{0\leqslant t\leqslant T}\|v_{n,\varepsilon}(t)\|^2_{V}>M_1\right)
\endaligned
\end{equation}

Let $U_{n,\varepsilon}=1+\|u_{n,\varepsilon}\|^2_{\tilde{H}^{1,1}}$, 
 applying It\^o's formula to $e^{-k\varepsilon\int^t_0U_{n,\varepsilon}(s)ds}\|u_{n,\varepsilon}(t)-v_{n,\varepsilon}(t)\|^2_H$ for some constant $k>0$, we get
\begin{equation}\label{eq2 in lemma7}\aligned
&e^{-k\varepsilon\int^t_0U_{n,\varepsilon}(s)ds}\|u_{n,\varepsilon}(t)-v_{n,\varepsilon}(t)\|^2_H+2\varepsilon\int^t_0e^{-k\varepsilon\int^s_0U_{n,\varepsilon}(r)dr}\|\partial_1(u_{n,\varepsilon}(s)-v_{n,\varepsilon}(s))\|^2_Hds\\
=&-k\varepsilon\int^t_0e^{-k\varepsilon\int^s_0U_{n,\varepsilon}(r)dr}U_{n,\varepsilon}(s)\|u_{n,\varepsilon}(s)-v_{n,\varepsilon}(s)\|^2_Hds\\
&+2\varepsilon\int^t_0e^{-k\varepsilon\int^s_0U_{n,\varepsilon}(r)dr}\langle u_{n,\varepsilon}(s)-v_{n,\varepsilon}(s), \partial_1^2 v_{n,\varepsilon}(s)\rangle ds\\
&-2\varepsilon\int^t_0e^{-k\varepsilon\int^s_0U_{n,\varepsilon}(r)dr}b(u_{n,\varepsilon}(s),u_{n,\varepsilon}(s),u_{n,\varepsilon}(s)-v_{n,\varepsilon}(s))ds\\
&+\varepsilon\int^t_0e^{-k\varepsilon\int^s_0U_{n,\varepsilon}(r)dr}\|\sigma(\varepsilon s,u_{n,\varepsilon}(s))-\sigma(\varepsilon s, v_{n,\varepsilon}(s))\|^2_{L_2(l^2,H)}ds\\
&+2\sqrt{\varepsilon}\int^t_0e^{-k\varepsilon\int^s_0U_{n,\varepsilon}(r)dr}\langle u_{n,\varepsilon}(s)-v_{n,\varepsilon}(s), (\sigma(\varepsilon s,u_{n,\varepsilon}(s))-\sigma(\varepsilon s, v_{n,\varepsilon}(s)))dW(s) \rangle.
\endaligned
\end{equation}

For the second term on the right hand side of (\ref{eq2 in lemma7}), we have
\begin{align*}
&\Big{|}\int^t_0e^{-k\varepsilon\int^s_0U_{n,\varepsilon}(r)dr}\langle u_{n,\varepsilon}(s)-v_{n,\varepsilon}(s), \partial_1^2 v_{n,\varepsilon}(s)\rangle ds\Big{|}\\
\leqslant& \int^t_0e^{-k\varepsilon\int^s_0U_{n,\varepsilon}(r)dr}\|\partial_1( u_{n,\varepsilon}(s)-v_{n,\varepsilon}(s))\|_H\| \partial_1 v_{n,\varepsilon}(s)\|_H ds\\
\leqslant& \frac{1}{4}\int^t_0e^{-k\varepsilon\int^s_0U_{n,\varepsilon}(r)dr}\| \partial_1(u_{n,\varepsilon}(s)-v_{n,\varepsilon}(s))\|^2_Hds+C\int^t_0e^{-k\varepsilon\int^s_0U_{n,\varepsilon}(r)dr}\| v_{n,\varepsilon}(s)\|^2_{V} ds,
\end{align*}
where we use Young's inequality in the last inequality. 

For the third term on the right hand side of \eqref{eq2 in lemma7}, by Lemmas \ref{anisotropic estimate for b} and \ref{b(u,v,w)} we have
\begin{equation}\aligned
&|b(u_{n,\varepsilon},u_{n,\varepsilon},u_{n,\varepsilon}-v_{n,\varepsilon})|\\
=&|b(u_{n,\varepsilon}-v_{n,\varepsilon},u_{n,\varepsilon},u_{n,\varepsilon}-v_{n,\varepsilon})+b(v_{n,\varepsilon},u_{n,\varepsilon},u_{n,\varepsilon}-v_{n,\varepsilon}|\\
\leqslant&\frac{1}{4}\|\partial_1(u_{n,\varepsilon}-v_{n,\varepsilon})\|^2_H+CU_{n,\varepsilon}\|u_{n,\varepsilon}-v_{n,\varepsilon}\|^2_H+ C\|v_{n,\varepsilon}\|_{V}\|u_{n,\varepsilon}\|_{\tilde{H}^{1,1}}\|u_{n,\varepsilon}-v_{n.\varepsilon}\|_H\\
\leqslant&\frac{1}{4}\|\partial_1(u_{n,\varepsilon}-v_{n,\varepsilon})\|^2_H+C\|v_{n,\varepsilon}\|^2_{V}+C_1 U_{n,\varepsilon}\|u_{n,\varepsilon}-v_{n,\varepsilon}\|^2_H,
\endaligned
\end{equation}
 where $C_1$ is a constant.

Thus we obtain
\begin{align*}
&e^{-k\varepsilon\int^t_0U_{n,\varepsilon}(s)ds}\|u_{n,\varepsilon}(t)-v_{n,\varepsilon}(t)\|^2_H+\varepsilon\int^t_0e^{-k\varepsilon\int^s_0U_{n,\varepsilon}(r)dr}\|\partial_1(u_{n,\varepsilon}(s)-v_{n,\varepsilon}(s))\|^2_Hds\\
\leqslant&-k\varepsilon\int^t_0e^{-k\varepsilon\int^s_0U_{n,\varepsilon}(r)dr}U_{n,\varepsilon}(s)\|u_{n,\varepsilon}(s)-v_{n,\varepsilon}(s)\|^2_Hds+C\varepsilon \int^t_0e^{-k\varepsilon\int^s_0U_{n,\varepsilon}(r)dr}\| v_{n,\varepsilon}(s)\|^2_{V} ds\\
&+C_1\varepsilon \int^t_0e^{-k\varepsilon\int^s_0U_{n,\varepsilon}(r)dr}U_{n,\varepsilon}(s)\|u_{n,\varepsilon}(s)-v_{n,\varepsilon}(s)\|^2_Hds\\
&+L_1\varepsilon \int^t_0e^{-k\varepsilon\int^s_0U_{n,\varepsilon}(r)dr}\|u_{n,\varepsilon}(s)-v_{n,\varepsilon}(s)\|^2_Hds\\
&+2\sqrt{\varepsilon}\int^t_0e^{-k\varepsilon\int^s_0U_{n,\varepsilon}(r)dr}\langle u_{n,\varepsilon}(s)-v_{n,\varepsilon}(s), (\sigma(\varepsilon s,u_{n,\varepsilon}(s))-\sigma(\varepsilon s, v_{n,\varepsilon}(s)))dW(s) \rangle,
\end{align*}
where  we used  (A3') in the fourth line.

Hence, choosing $k>C_1+C_2$, by Lemma \ref{martingale lemma} and the similar techniques in the previous lemma and the definition of stopping times, we deduce that for $p\geqslant 2$
\begin{align*}
&\left(E\left[\sup_{0\leqslant s\leqslant t\wedge\tau^{1,n}_{M_1,\varepsilon}\wedge\tau^{2,n}_{M_2,\varepsilon}}e^{-k\varepsilon\int^s_0U_{n,\varepsilon}(r)dr}\|u_{n,\varepsilon}(s)-v_{n,\varepsilon}(s)\|^2_H\right]^p\right)^{\frac{2}{p}}\\
\leqslant& CM_1^2\varepsilon^2+C(\varepsilon^2+\varepsilon p)\int^t_0\left(E\left[\sup_{0\leqslant r\leqslant s\wedge\tau^{1,n}_{M_1,\varepsilon}\wedge\tau^{2,n}_{M_2,\varepsilon}}e^{-k\varepsilon\int^r_0U_{n,\varepsilon}(l)dl}\|u_{n,\varepsilon}(r)-v_{n,\varepsilon}(r)\|^2_H\right]^p\right)^{\frac{2}{p}}ds.
\end{align*}

Then Gronwall's inequality implies that 
\begin{equation}\label{eq3 in lemma7}\aligned
&\left(E\left[\sup_{0\leqslant t\leqslant \tau^{1,n}_{M_1,\varepsilon}\wedge\tau^{2,n}_{M_2,\varepsilon}}\|u_{n,\varepsilon}(t)-v_{n,\varepsilon}(t)\|^2_H\right]^p\right)^{\frac{2}{p}}\\
\leqslant&\left(E\left[\sup_{0\leqslant t\leqslant \tau^{1,n}_{M_1,\varepsilon}\wedge\tau^{2,n}_{M_2,\varepsilon}}(e^{-k\varepsilon\int^t_0U_{n,\varepsilon}(s)ds}\|u_{n,\varepsilon}(t)-v_{n,\varepsilon}(t)\|^2_H)^pe^{kp\varepsilon\int^{\tau^{1,n}_{M_1,\varepsilon}\wedge\tau^{2,n}_{M_2,\varepsilon}}_0U_{n,\varepsilon}(s)ds}\right]\right)^{\frac{2}{p}}\\
\leqslant& e^{C(\varepsilon+M_2)}CM_1^2\varepsilon^2 e^{C(\varepsilon^2+\varepsilon p)}.
\endaligned
\end{equation}

By Lemmas \ref{lemma3} and  \ref{lemma4}, we know that for any $R>0$, there exists $M_1$ such that 
$$\sup_{0<\varepsilon\leqslant 1} \varepsilon \log P\left(F_{u_{n,\varepsilon}}(T)>M_1\right)\leqslant -R,$$
$$\sup_{0<\varepsilon\leqslant 1} \varepsilon \log P\left(\sup_{0\leqslant t\leqslant T}\|v_{n,\varepsilon}(t)\|^2_{V}>M_1\right)\leqslant -R.$$

For such a constant $M_1$, by Lemma \ref{lemma3}, there exists $M_2$ such that
$$\sup_{0<\varepsilon\leqslant 1} \varepsilon \log P\left(G_{u_{n,\varepsilon}}(\tau^n_{M_1,\varepsilon})>M_2\right)\leqslant -R.$$

Then for such $M_1,M_2$, let $p=\frac{2}{\varepsilon}$ in (\ref{eq3 in lemma7}), we obtain
\begin{align*}
&\varepsilon \log P\left(\sup_{0\leqslant t\leqslant \tau^{1,n}_{M_1,\varepsilon}\wedge\tau^{2,n}_{M_2,\varepsilon}}\|u_{n,\varepsilon}(t)-v_{n,\varepsilon}(t)\|^2_H>\delta\right)\\
\leqslant&\log \left(E\left[\sup_{0\leqslant t\leqslant \tau^{1,n}_{M_1,\varepsilon}\wedge\tau^{2,n}_{M_2,\varepsilon}}\|u_{n,\varepsilon}(t)-v_{n,\varepsilon}(t)\|^2_H\right]^p\right)^{\frac{2}{p}}-\log\delta^2\\
\leqslant& C(\varepsilon+M_2)+\log[CM^2_1\varepsilon^2]+C(\varepsilon^2+1)-\log\delta^2\\
\rightarrow& -\infty \text{ }\text{ as }\varepsilon\rightarrow 0,
\end{align*}
where we used Chebyshev's inequality in the first inequality. Thus there exists a $\varepsilon_0\in(0,1)$ such that for any $\varepsilon\in(0,\varepsilon_0)$, 
$$P\left(\sup_{0\leqslant t\leqslant \tau^{1,n}_{M_1,\varepsilon}\wedge\tau^{2,n}_{M_2,\varepsilon}}\|u_{n,\varepsilon}(t)-v_{n,\varepsilon}(t)\|^2_H>\delta\right)\leqslant e^{-\frac{R}{\varepsilon}}.$$

Putting the above estimate together, by (\ref{eq in lemma7}) we see that for $\varepsilon\in(0,\varepsilon_0)$
$$P\left(\sup_{0\leqslant t\leqslant T}\|u_{n,\varepsilon}(t)-v_{n,\varepsilon}(t)\|^2_H>\delta\right)\leqslant 4e^{-\frac{R}{\varepsilon}}.$$

Since $R$ is arbitrary, we finish the proof.

\end{proof}

\begin{proof}[Proof of Theorem \ref{main}]

By Lemma \ref{LDP for v},  $v_\varepsilon$ satisfies a large deviation principle with the rate function $I^{u_0}$. Our task remain is to show that $u_\varepsilon$ and $v_\varepsilon$ are exponentially equivalent, then the result follows from Lemma \ref{EXEQ}.

By Lemmas \ref{lemma5} and \ref{lemma6}, for any $R>0$, there exists a $N_0$ such that for any $\varepsilon\in(0,1]$,
$$P\left(\sup_{0\leqslant t\leqslant T}\|u_{\varepsilon}(t)-u_{N_0,\varepsilon}(t)\|_H^2>\frac{\delta}{3}\right)\leqslant e^{-\frac{R}{\varepsilon}},$$
and
$$P\left(\sup_{0\leqslant t\leqslant T}\|v_{\varepsilon}(t)-v_{N_0,\varepsilon}(t)\|_H^2>\frac{\delta}{3}\right)\leqslant e^{-\frac{R}{\varepsilon}}.$$

Then by Lemma \ref{lemma7}, for such $N_0$, there exists a $\varepsilon_0$ such that for any $\varepsilon\in(0,\varepsilon_0)$,
$$P\left(\sup_{0\leqslant t\leqslant T}\|u_{N_0, \varepsilon}(t)-v_{N_0,\varepsilon}(t)\|_H^2>\frac{\delta}{3}\right)\leqslant e^{-\frac{R}{\varepsilon}}.$$

Therefore we deduce that for $\varepsilon\in(0,\varepsilon_0)$
$$P\left(\sup_{0\leqslant t\leqslant T}\|u_{\varepsilon}(t)-v_{\varepsilon}(t)\|_H^2>\delta\right)\leqslant 3e^{-\frac{R}{\varepsilon}}.$$

Since $R$ is arbitrary, we finish the proof.
\end{proof}

\appendix
\section{Appendix}

We now present several  lemmas from \cite{LZZ18}.  It follows from Minkowski  inequality that
\begin{lemma}\label{anisotropic Lp spaces}
For $1\leqslant q\leqslant p\leqslant \infty$, we have
$$\|u\|_{L^p_h(L^q_v)}\leqslant \|u\|_{L^q_v(L^p_h)},$$
$$\|u\|_{L^p_v(L^q_h)}\leqslant \|u\|_{L^q_h(L^p_v)}.$$
\end{lemma}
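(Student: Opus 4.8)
The plan is to reduce both inequalities to the classical Minkowski integral inequality: for any nonnegative measurable function $f$ on a product of ($\sigma$-finite) measure spaces and any exponent $r\geqslant 1$,
\begin{equation*}
\Big\|\int f(\cdot,y)\,dy\Big\|_{L^r_x}\leqslant \int\|f(\cdot,y)\|_{L^r_x}\,dy .
\end{equation*}
I would first recall (or cite) the proof of this via duality: write $\|g\|_{L^r_x}=\sup\{\int g(x)\phi(x)\,dx:\ \|\phi\|_{L^{r'}_x}\leqslant 1\}$, then for $g(x)=\int f(x,y)\,dy$ apply Tonelli's theorem to interchange the order of integration and Hölder's inequality in $x$ for each fixed $y$; the interchange is legitimate because the integrand is nonnegative.

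Next, for the first asserted inequality with $1\leqslant q\leqslant p<\infty$, I would set $r:=p/q\geqslant 1$ and apply the above to $f(x_1,x_2):=|u(x_1,x_2)|^q$, integrating out $x_2\in\mathbb{T}_v$ and taking the $L^r$-norm in $x_1\in\mathbb{T}_h$. This gives
\begin{equation*}
\Big\|\int_{\mathbb{T}_v}|u(\cdot,x_2)|^q\,dx_2\Big\|_{L^{p/q}_h}
\leqslant\int_{\mathbb{T}_v}\big\||u(\cdot,x_2)|^q\big\|_{L^{p/q}_h}\,dx_2
=\int_{\mathbb{T}_v}\Big(\int_{\mathbb{T}_h}|u(x_1,x_2)|^p\,dx_1\Big)^{q/p}dx_2 .
\end{equation*}
The left-hand side is $\|u\|_{L^p_h(L^q_v)}^q$ and the right-hand side is $\|u\|_{L^q_v(L^p_h)}^q$, so taking $q$-th roots yields $\|u\|_{L^p_h(L^q_v)}\leqslant\|u\|_{L^q_v(L^p_h)}$. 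The endpoint $p=\infty$ is even simpler: for a.e.\ $x_1$ one has $\int_{\mathbb{T}_v}|u(x_1,x_2)|^q\,dx_2\leqslant\int_{\mathbb{T}_v}\|u(\cdot,x_2)\|_{L^\infty_h}^q\,dx_2$, and taking the essential supremum over $x_1$ and then $q$-th roots gives the claim. The second inequality, $\|u\|_{L^p_v(L^q_h)}\leqslant\|u\|_{L^q_h(L^p_v)}$, is obtained by running exactly the same argument with the roles of the horizontal variable $x_1$ and the vertical variable $x_2$ interchanged.

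There is essentially no genuine obstacle here: the statement is a textbook fact about iterated $L^p$-norms, and the only points requiring a little care are the bookkeeping of the exponents ($q$ versus $p$ and the auxiliary index $r=p/q$) and the (automatic) Tonelli justification for swapping the order of integration, which is fine because every integrand in sight is nonnegative.
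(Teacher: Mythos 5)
Your argument is correct and is precisely the route the paper intends: the paper states the lemma with the remark "It follows from Minkowski inequality" and gives no further proof, and your reduction to the Minkowski integral inequality with exponent $r=p/q$ (plus the easy $p=\infty$ endpoint and the symmetric argument for the second inequality) is the standard way to carry that out.
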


\begin{lemma}[{\cite[Lemma 3.4]{LZZ18}}]\label{estimate for anisotropic spaces}
Let $u$ be a smooth function from $\mathbb{T}^2$ to $\mathbb{R}$, we have
$$\|u\|^2_{L^2_v(L^\infty_h)}\leqslant C(\|u\|_{L^2}\|\partial_1 u\|_{L^2}+\|u\|_{L^2}^2),$$
$$\|u\|^2_{L^2_h(L^\infty_v)}\leqslant C(\|u\|_{L^2}\|\partial_2 u\|_{L^2}+\|u\|_{L^2}^2).$$
\end{lemma}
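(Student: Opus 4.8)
The plan is to reduce the two-dimensional statement to a one-dimensional Agmon-type inequality applied on each horizontal (resp. vertical) fiber, and then to integrate over the transverse variable by the Cauchy--Schwarz inequality. Since we work on a torus there is no boundary to exploit, so the one-dimensional step will be carried out via an averaging trick rather than a fundamental-theorem-of-calculus argument anchored at an endpoint; this is what produces the lower-order term $\|u\|_{L^2}^2$ on the right-hand side.

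First I would establish the following one-dimensional estimate: for every smooth $2\pi$-periodic function $f$ on $\mathbb{T}_h$,
$$\|f\|_{L^\infty(\mathbb{T}_h)}^2\leqslant C\left(\|f\|_{L^2(\mathbb{T}_h)}\|f'\|_{L^2(\mathbb{T}_h)}+\|f\|_{L^2(\mathbb{T}_h)}^2\right).$$
To see this, for $x,y\in\mathbb{T}_h$ write $f(x)^2-f(y)^2=\int_y^x 2f(t)f'(t)\,dt$, hence $|f(x)|^2\leqslant |f(y)|^2+2\int_{\mathbb{T}_h}|f(t)||f'(t)|\,dt$; averaging in $y$ over $\mathbb{T}_h$ and applying Cauchy--Schwarz to the last integral gives the claim (with, e.g., $C=\max\{2,\tfrac1{2\pi}\}$).

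Next I would apply this fiberwise. Fixing $x_2\in\mathbb{T}_v$ and taking $f=u(\cdot,x_2)$ yields
$$\|u(\cdot,x_2)\|_{L^\infty_h}^2\leqslant C\left(\|u(\cdot,x_2)\|_{L^2_h}\|\partial_1 u(\cdot,x_2)\|_{L^2_h}+\|u(\cdot,x_2)\|_{L^2_h}^2\right).$$
Integrating in $x_2$ over $\mathbb{T}_v$, the left-hand side becomes $\|u\|_{L^2_v(L^\infty_h)}^2$, the last term becomes $\|u\|_{L^2}^2$, and for the cross term the Cauchy--Schwarz inequality in $x_2$ gives
$$\int_{\mathbb{T}_v}\|u(\cdot,x_2)\|_{L^2_h}\|\partial_1 u(\cdot,x_2)\|_{L^2_h}\,dx_2\leqslant \|u\|_{L^2}\|\partial_1 u\|_{L^2},$$
which is precisely the first asserted inequality. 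The second inequality follows verbatim after interchanging the roles of $x_1$ and $x_2$ (equivalently, of $\mathbb{T}_h$ and $\mathbb{T}_v$). I do not expect any genuine obstacle here; the only point requiring minor care is that on the torus one cannot anchor the one-dimensional estimate at a zero of $f$, which is why the averaging argument—and the resulting extra term $\|u\|_{L^2}^2$—is needed.
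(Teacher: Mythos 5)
Your proof is correct and follows essentially the same route as the paper's: a fiberwise one-dimensional Agmon-type estimate (with the lower-order term coming from an averaging anchor, where the paper picks a point equal to the horizontal mean via the mean value theorem and you average the pointwise inequality over the anchor instead), followed by integration in the transverse variable with the Cauchy--Schwarz inequality.
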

\iffalse
\begin{proof}
We only present  the proof to the first inequality. 

Fix $x_2\in [0,2\pi]$. Define $\overline{u}(x_2)=\frac{1}{2\pi}\int_{\mathbb{T}_h}u(x,x_2)dx$.
By mean value theorem, there exists some $a(x_2)\in[0,2\pi]$ such that $u(a(x_2),x_2)=\overline{u}(x_2)$.

Then for any $x_1\in[0,2\pi]$,
\begin{align*}
(u(x_1,x_2)-\overline{u}(x_2))^2=&(u(x_1,x_2)-u(a(x_2), x_2))^2\\
\leqslant& \int_{\mathbb{T}_h}\partial_1(u(x,x_2)-\overline{u}(x_2))^2dx\\
=& 2\int_{\mathbb{T}_h}|u(x,x_2)-\overline{u}(x_2)||\partial_1u(x,x_2)|dx\\
\leqslant& 2\int_{\mathbb{T}_h}|u(x,x_2)||\partial_1u(x,x_2)|dx+2\int_{\mathbb{T}_h}|\overline{u}(x_2)||\partial_1u(x,x_2)|dx.
\end{align*}

Thus
$$\|u(x_1,x_2)\|^2_{L_h^\infty}\leqslant C(\overline{u}(x_2)^2+\int_{\mathbb{T}_h}|u(x,x_2)||\partial_1u(x,x_2)|dx+|\overline{u}(x_2)|\int_{\mathbb{T}_h}|\partial_1u(x,x_2)|dx).$$

Noticing that by H\"older's inequality, $\|\overline{u}(x_2)\|^2_{L_v^2}\leqslant C\|u\|^2_{L^2}$. Then Integrating over $\mathbb{T}_v$ finishes the proof.

\end{proof}
\fi
The following anisotropic estimate is from the proof of  \cite[Theorem 3.1]{LZZ18}:
\begin{lemma}\label{anisotropic estimate for b}
For smooth functions $u,v$ from $\mathbb{T}^2$ to $\mathbb{R}$ with $u$ satisfies the divergence free condition, we have 
\begin{align*}
|b(u,v,u)|\leqslant a\|\partial_1u\|^2_{L^2}+C\|u\|^2_{L^2}\Big{(}&\|\partial_1v\|^\frac{2}{3}_{L^2}\|\partial_1\partial_2 v\|^\frac{2}{3}_{L^2}+\|\partial_2 v\|^\frac{2}{3}_{L^2}\|\partial_1\partial_2 v\|^\frac{2}{3}_{L^2}\\
&+\|\partial_1v\|^2_{L^2}+\|\partial_1v\|_{L^2}+\|\partial_2v\|^2_{L^2}+\|\partial_2v\|_{L^2}\\
&+\|\partial_1v\|^\frac{1}{2}_{L^2}\|\partial_1\partial_2 v\|^\frac{1}{2}_{L^2}+\|\partial_2 v\|^\frac{1}{2}_{L^2}\|\partial_1\partial_2 v\|^\frac{1}{2}_{L^2}\Big{)},
\end{align*}
where $a>0$ is a constant small enough.

In particular, we have
$$|b(u,v,u)|\leqslant a\|\partial_1u\|^2_{L^2}+C\|u\|^2_{L^2}(1+\|v\|^2_{{H}^{1,1}}).$$
\end{lemma}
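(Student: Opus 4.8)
The plan is to expand $b(u,v,u)=\sum_{i,j=1}^2\int_{\mathbb{T}^2}u^j(\partial_j v^i)u^i\,dx$ into its horizontal part ($j=1$) and vertical part ($j=2$), and to estimate each piece by anisotropic H\"older inequalities, placing every factor in the $L^p$-space (in the $x_1$- or the $x_2$-variable) best adapted to it. The guiding principle is that the dissipation controls only $\partial_1u$, so no uncontrolled $\partial_2u$ may ever be generated on a $u$-factor; this is precisely where the divergence-free condition $\partial_2u^2=-\partial_1u^1$ will be used, and where Lemma~\ref{anisotropic Lp spaces} must be applied in the right orientation so that the extra derivative falls on $\partial_1\partial_2 v$ (available in $H^{1,1}$) rather than on $\partial_1^2 v$ or $\partial_2^2 v$ (which are not).

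Concretely, for the horizontal part I would bound, for a.e.\ $x_2$, $\int_{\mathbb{T}_h}|u^1||\partial_1v^i||u^i|\,dx_1\leqslant\|u^1(\cdot,x_2)\|_{L^\infty_h}\|\partial_1v^i(\cdot,x_2)\|_{L^2_h}\|u^i(\cdot,x_2)\|_{L^2_h}$, then integrate in $x_2$ with H\"older to get $|\int u^1(\partial_1v^i)u^i|\leqslant\|u^1\|_{L^2_v(L^\infty_h)}\|\partial_1v^i\|_{L^\infty_v(L^2_h)}\|u^i\|_{L^2}$; next I apply Lemma~\ref{estimate for anisotropic spaces} to get $\|u^1\|^2_{L^2_v(L^\infty_h)}\leqslant C(\|u^1\|_{L^2}\|\partial_1u^1\|_{L^2}+\|u^1\|^2_{L^2})$, and Lemma~\ref{anisotropic Lp spaces} together with Lemma~\ref{estimate for anisotropic spaces} to get $\|\partial_1v^i\|^2_{L^\infty_v(L^2_h)}\leqslant\|\partial_1v^i\|^2_{L^2_h(L^\infty_v)}\leqslant C(\|\partial_1v^i\|_{L^2}\|\partial_1\partial_2v^i\|_{L^2}+\|\partial_1v^i\|^2_{L^2})$. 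For the vertical part I would do the mirror-image estimate in the roles of $x_1$ and $x_2$: $|\int u^2(\partial_2v^i)u^i|\leqslant\|u^2\|_{L^2_h(L^\infty_v)}\|\partial_2v^i\|_{L^\infty_h(L^2_v)}\|u^i\|_{L^2}$, with $\|u^2\|^2_{L^2_h(L^\infty_v)}\leqslant C(\|u^2\|_{L^2}\|\partial_2u^2\|_{L^2}+\|u^2\|^2_{L^2})$, and here I replace $\partial_2u^2$ by $-\partial_1u^1$ via $\mathrm{div}\,u=0$, while $\|\partial_2v^i\|^2_{L^\infty_h(L^2_v)}\leqslant\|\partial_2v^i\|^2_{L^2_v(L^\infty_h)}\leqslant C(\|\partial_2v^i\|_{L^2}\|\partial_1\partial_2v^i\|_{L^2}+\|\partial_2v^i\|^2_{L^2})$. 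Summing the two parts gives $|b(u,v,u)|\leqslant C\|u\|_{L^2}(\|u\|_{L^2}\|\partial_1u\|_{L^2}+\|u\|^2_{L^2})^{1/2}\big[(\|\partial_1v\|_{L^2}\|\partial_1\partial_2v\|_{L^2}+\|\partial_1v\|^2_{L^2})^{1/2}+(\|\partial_2v\|_{L^2}\|\partial_1\partial_2v\|_{L^2}+\|\partial_2v\|^2_{L^2})^{1/2}\big]$.

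To finish, I use $(x+y)^{1/2}\leqslant x^{1/2}+y^{1/2}$ to rewrite the right-hand side as $C(\|u\|_{L^2}^{3/2}\|\partial_1u\|_{L^2}^{1/2}+\|u\|^2_{L^2})\,P(v)$, where $P(v)$ is the sum of the four half-power products $\|\partial_1v\|^{1/2}\|\partial_1\partial_2v\|^{1/2}+\|\partial_1v\|+\|\partial_2v\|^{1/2}\|\partial_1\partial_2v\|^{1/2}+\|\partial_2v\|$ (all $L^2$ norms). On the term $\|u\|^{3/2}\|\partial_1u\|^{1/2}P(v)$ I apply Young's inequality with exponents $(4,4/3)$ in the factor $\|\partial_1u\|^{1/2}$, which absorbs $a\|\partial_1u\|^2_{L^2}$ and leaves $C\|u\|^2_{L^2}P(v)^{4/3}$; expanding $P(v)^{4/3}$ and using $t^{4/3}\leqslant t+t^2$ on the two pure pieces $\|\partial_1v\|^{4/3},\|\partial_2v\|^{4/3}$ produces exactly the eight $v$-terms in the statement, and the remaining term $C\|u\|^2_{L^2}P(v)$ supplies the half-power products and the linear terms directly. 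The displayed ``in particular'' is then immediate: since $\|\partial_1v\|_{L^2},\|\partial_2v\|_{L^2},\|\partial_1\partial_2v\|_{L^2}\leqslant\|v\|_{H^{1,1}}$, every one of those eight terms is dominated by $\|v\|_{H^{1,1}}+\|v\|^{4/3}_{H^{1,1}}\leqslant C(1+\|v\|^2_{H^{1,1}})$. I do not expect a deep obstacle here; the only delicate point is the bookkeeping of which $L^p$ in which variable goes where, so that (a) only $\partial_1u$ ever appears (forcing the use of $\mathrm{div}\,u=0$ in the vertical term) and (b) all three applications of Lemmas~\ref{estimate for anisotropic spaces}--\ref{anisotropic Lp spaces} are oriented so the surviving derivative on $v$ is always of mixed type $\partial_1\partial_2 v$.
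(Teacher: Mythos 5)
Your proposal is correct and follows essentially the same route as the paper's own proof: split $b(u,v,u)$ into the $u^1\partial_1v$ and $u^2\partial_2v$ parts, apply anisotropic H\"older together with Lemmas \ref{anisotropic Lp spaces} and \ref{estimate for anisotropic spaces} (oriented so that only $\partial_1u^1$, via $\operatorname{div}u=0$, and the mixed derivative $\partial_1\partial_2v$ appear), and then absorb $\|\partial_1u\|_{L^2}$ by Young's inequality. The only difference is cosmetic bookkeeping in the last step (a single Young with exponents $(4,4/3)$ plus $t^{4/3}\leqslant t+t^2$, versus the paper's termwise Young), and both yield exactly the eight stated $v$-terms.
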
 
\begin{proof}
We have
\begin{align*}
|b(u,v,u)|&=|\langle u^1\partial_1v+u^2\partial_2v, u\rangle|\\
&\leqslant (\|u^1\|_{L^\infty_h(L^2_v)}\|\partial_1 v\|_{L^2_h(L^\infty_v)}+\|u^2\|_{L^2_h(L^\infty_v)}\|\partial_2 v\|_{L^\infty_h(L^2_v)})\|u\|_{L^2},
\end{align*}
where $u=(u^1, u^2)$. Now we show the calculation of two terms in the right hand side separately.

 For the first term,  by Lemmas \ref{anisotropic Lp spaces} and \ref{estimate for anisotropic spaces}, we have
\begin{align*}
&\|u^1\|_{L^\infty_h(L^2_v)}\|\partial_1 v\|_{L^2_h(L^\infty_v)}\|u\|_{L^2}\\
\leqslant&C\|u\|_{L^2}\left(\|u^1\|_{L^2}\|\partial_1u^1\|_{L^2}+\|u^1\|^2_{L^2}\right)^\frac{1}{2}\left(\|\partial_1v\|_{L^2}\|\partial_1\partial_2 v\|_{L^2}+\|\partial_1 v\|^2_{L^2}\right)^\frac{1}{2}\\
\leqslant& C\|u\|_{L^2} \left(\|u^1\|_{L^2}\|\partial_1u^1\|_{L^2}\|\partial_1v\|_{L^2}\|\partial_1\partial_2 v\|_{L^2}\right)^\frac{1}{2}+C\|u\|_{L^2}\|u^1\|_{L^2}\|\partial_1v\|_{L^2}\\
&+C\|u\|_{L^2}(\|u^1\|_{L^2}+\|\partial_1u^1\|_{L^2})\|\partial_1v\|_{L^2}+C\|u\|_{L^2}\|u^1\|_{L^2}\|\partial_1v\|^{\frac{1}{2}}_{L^2}\|\partial_1\partial_2 v\|^{\frac{1}{2}}_{L^2}.
\end{align*}

Then Young's inequality implies that 
\begin{align*}
&C\|u\|_{L^2} \left(\|u^1\|_{L^2}\|\partial_1u^1\|_{L^2}\|\partial_1v\|_{L^2}\|\partial_1\partial_2 v\|_{L^2}\right)^\frac{1}{2}\\
\leqslant &\frac{a}{4}\|\partial_1 u\|^2_{L^2}+C\|\partial_1v\|^{\frac{2}{3}}_{L^2}\|\partial_1\partial_2v\|^\frac{2}{3}_{L^2}\|u\|^2_{L^2},
\end{align*}
and
\begin{align*}
&C\|u\|_{L^2}\|\partial_1u^1\|_{L^2}\|\partial_1v\|_{L^2}\leqslant \frac{a}{4}\|\partial_1u\|^2_{L^2}+C\|\partial_1v\|^2_{L^2}\|u\|^2_{L^2}.
\end{align*}

Thus we have
\begin{align*}
&\|u^1\|_{L^\infty_h(L^2_v)}\|\partial_1 v\|_{L^2_h(L^\infty_v)}\|u\|_{L^2}\\
\leqslant&\frac{a}{2}\|\partial_1u\|^2_{L^2}+C\|u\|^2_{L^2}\Big{(}\|\partial_1v\|^{\frac{2}{3}}_{L^2}\|\partial_1\partial_2v\|^\frac{2}{3}_{L^2}+\|\partial_1v\|^2_{L^2}+\|\partial_1v\|_{L^2}+\|\partial_1v\|^{\frac{1}{2}}_{L^2}\|\partial_1\partial_2 v\|^{\frac{1}{2}}_{L^2}\Big{)}.
\end{align*}

Do the same calculation for the second term and combine the divergence free condition $\partial_2u^2=-\partial_1u^1$, we have
\begin{align*}
&\|u^2\|_{L^2_h(L^\infty_v)}\|\partial_2 v\|_{L^\infty_h(L^2_v)}\|u\|_{L^2}\\
\leqslant&\frac{a}{2}\|\partial_1u\|^2_{L^2}+C\|u\|^2_{L^2}\Big{(}\|\partial_2v\|^{\frac{2}{3}}_{L^2}\|\partial_1\partial_2v\|^\frac{2}{3}_{L^2}+\|\partial_2v\|^2_{L^2}+\|\partial_2v\|_{L^2}+\|\partial_2v\|^{\frac{1}{2}}_{L^2}\|\partial_1\partial_2 v\|^{\frac{1}{2}}_{L^2}\Big{)},
\end{align*}
which implies the first inequality.

The second inequality holds from the first one and Young's Inequality.

\end{proof}

Similar to the proof of Lemma \ref{anisotropic estimate for b}, by Lemmas \ref{anisotropic Lp spaces} and \ref{estimate for anisotropic spaces}, we also have
\begin{lemma}\label{b(u,v,w)}
For smooth functions $u,v,w$ form $\mathbb{T}^2$ to $\mathbb{R}^2$ with divergence free condition, we have
$$|b(u,v,w)|\leqslant C\|u\|_{H^1}\|v\|_{H^{1,1}}\|w\|_{L^2}.$$
\end{lemma}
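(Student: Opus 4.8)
The plan is to mimic the proof of Lemma~\ref{anisotropic estimate for b} but keep track of three different test functions instead of identifying the first and third. Writing $u=(u^1,u^2)$ and using the divergence-free condition only to exchange $\partial_2 u^2=-\partial_1 u^1$ if needed, I would start from the identity
$$b(u,v,w)=\langle u^1\partial_1 v+u^2\partial_2 v,\,w\rangle,$$
and then split into the horizontal piece $\langle u^1\partial_1 v,w\rangle$ and the vertical piece $\langle u^2\partial_2 v,w\rangle$, distributing the $L^2(\mathbb T^2)$ pairing across an anisotropic Hölder inequality. Concretely, for the horizontal term I would bound
$$|\langle u^1\partial_1 v,w\rangle|\leqslant \|u^1\|_{L^2_v(L^\infty_h)}\,\|\partial_1 v\|_{L^\infty_v(L^2_h)}\,\|w\|_{L^2_h(L^2_v)},$$
and for the vertical term
$$|\langle u^2\partial_2 v,w\rangle|\leqslant \|u^2\|_{L^2_h(L^\infty_v)}\,\|\partial_2 v\|_{L^\infty_h(L^2_v)}\,\|w\|_{L^2},$$
choosing the split of the $L^\infty$ directions so that each derivative of $v$ carries exactly one vertical and/or one horizontal derivative, which is what produces the $H^{1,1}$ norm of $v$.

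Next I would estimate each factor. The terms involving $u$ are handled by Lemma~\ref{estimate for anisotropic spaces} together with Lemma~\ref{anisotropic Lp spaces}: for instance $\|u^1\|_{L^2_v(L^\infty_h)}^2\leqslant C(\|u^1\|_{L^2}\|\partial_1 u^1\|_{L^2}+\|u^1\|_{L^2}^2)\leqslant C\|u\|_{H^1}^2$, and similarly $\|u^2\|_{L^2_h(L^\infty_v)}^2\leqslant C\|u\|_{H^1}^2$. For the factors involving $v$, the key point is that $\|\partial_1 v\|_{L^\infty_v(L^2_h)}^2\leqslant C(\|\partial_1 v\|_{L^2}\|\partial_1\partial_2 v\|_{L^2}+\|\partial_1 v\|_{L^2}^2)\leqslant C\|v\|_{H^{1,1}}^2$ (using the second inequality of Lemma~\ref{estimate for anisotropic spaces} applied to $\partial_1 v$, after swapping the roles of the variables via Lemma~\ref{anisotropic Lp spaces}), and likewise $\|\partial_2 v\|_{L^\infty_h(L^2_v)}^2\leqslant C(\|\partial_2 v\|_{L^2}\|\partial_1\partial_2 v\|_{L^2}+\|\partial_2 v\|_{L^2}^2)\leqslant C\|v\|_{H^{1,1}}^2$. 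Finally $\|w\|_{L^2_h(L^2_v)}=\|w\|_{L^2}$. Multiplying the three bounds and summing the horizontal and vertical contributions gives $|b(u,v,w)|\leqslant C\|u\|_{H^1}\|v\|_{H^{1,1}}\|w\|_{L^2}$, as claimed; a standard density argument then removes the smoothness hypothesis.

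The only delicate point — and the step I expect to need the most care — is the bookkeeping of which mixed-norm space each factor lives in, so that every application of Lemma~\ref{estimate for anisotropic spaces} is legitimate (it requires the "missing" derivative to be the one matching the $L^\infty$ direction) and so that the three exponents in the anisotropic Hölder inequality are genuinely conjugate in each variable separately. There is no analytic obstacle beyond this: unlike Lemma~\ref{anisotropic estimate for b}, we are not forced to absorb a $\|\partial_1 u\|_{L^2}^2$ term into the left-hand side, so no smallness constant $a$ and no Young's inequality splitting is needed — the estimate is a clean product bound. I would also note in passing that the divergence-free condition on $u,v,w$ is not essential for this particular inequality (only the mixed-norm embeddings are used), but I would keep it in the statement since that is the setting in which the lemma is applied.
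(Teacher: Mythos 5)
Your proposal is correct and follows essentially the same route as the paper: split $b(u,v,w)$ into $\langle u^1\partial_1 v,w\rangle+\langle u^2\partial_2 v,w\rangle$, apply anisotropic H\"older with the $L^\infty$ direction on $u^i$ in the $i$-th variable and on $\partial_i v$ in the complementary variable, then control each mixed norm via Lemma \ref{anisotropic Lp spaces} and Lemma \ref{estimate for anisotropic spaces}, yielding $C\|u\|_{H^1}\|v\|_{H^{1,1}}\|w\|_{L^2}$. The only difference from the paper is the (immaterial) order in which Minkowski's inequality is invoked to swap the iterated norms, and your side remark that the divergence-free condition is not actually used is consistent with the paper's argument.
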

\begin{proof}
\begin{align*}
&|b(u, v, w)|\\
\leqslant &(\|u^1\|_{L^\infty_h(L^2_v)}\|\partial_1v\|_{L^2_h(L^\infty_v)}+\|u^2\|_{L^2_h(L^\infty_v)}\|\partial_2 v\|_{L^\infty_h(L^2_v)})\|w\|_{L^2}\\
\leqslant&C\Big{(}(\|u^1\|_{L^2}\|\partial_1u^1\|_{L^2}+\|u^1\|^2_{L^2})^\frac{1}{2}(\|\partial_1v\|_{L^2}\|\partial_1\partial_2v\|_{L^2}+\|\partial_1v\|^2_{L^2})^\frac{1}{2}\\
&+(\|u^2\|_{L^2}\|\partial_2u^2\|_{L^2}+\|u^2\|_{L^2}^2)^\frac{1}{2}(\|\partial_2v\|_{L^2}\|\partial_1\partial_2v\|_{L^2}+\|\partial_2v\|_{L^2}^2)^\frac{1}{2}\Big{)}\|w\|_{L^2}\\
\leqslant&C\|u\|_{H^1}\|v\|_{H^{1,1}}\|w\|_{L^2}.
\end{align*}
\end{proof}

The next lemma is from the proof of \cite[Lemma 3.5]{LZZ18}, which plays an important role in $H^{0,1}$-estimate.

\begin{lemma}\label{estimate for b with partial_2}
For smooth function $u$ form $\mathbb{T}^2$ to $\mathbb{R}^2$ with divergence free condition, we have
$$|\langle \partial_2u, \partial_2(u\cdot \nabla u)\rangle|\leqslant a\|\partial_1\partial_2 u\|^2_{L^2}+C(1+\|\partial_1 u\|^2_{L^2})\|\partial_2 u\|^2_{L^2},$$
where $a>0$ is a constant small enough.
\end{lemma}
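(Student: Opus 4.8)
Proof proposal for Lemma A.7 ($\lvert\langle \partial_2u,\partial_2(u\cdot\nabla u)\rangle\rvert\leqslant a\|\partial_1\partial_2u\|_{L^2}^2+C(1+\|\partial_1u\|_{L^2}^2)\|\partial_2u\|_{L^2}^2$)

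The plan is to expand $\partial_2(u\cdot\nabla u)=\partial_2 u\cdot\nabla u+u\cdot\nabla\partial_2 u$ and test against $\partial_2 u$. The second piece $\langle\partial_2 u,u\cdot\nabla\partial_2 u\rangle$ vanishes because $\operatorname{div}u=0$ (this is exactly $b(u,\partial_2u,\partial_2u)=0$), so the whole expression reduces to $\langle\partial_2 u,\partial_2 u\cdot\nabla u\rangle$, i.e. the sum over $i,j$ of $\int_{\mathbb T^2}\partial_2 u^j\,\partial_2 u^i\,\partial_i u^j\,dx$. First I would split this according to whether the transport derivative $\partial_i$ is $\partial_1$ or $\partial_2$: the terms with $i=1$ are $\int \partial_2u^j\,\partial_2u^1\,\partial_1u^j$ and the terms with $i=2$ are $\int \partial_2u^j\,\partial_2u^2\,\partial_2u^j$. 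For the second group I would use $\partial_2u^2=-\partial_1u^1$ (divergence-free) to trade the ``bad'' vertical derivative $\partial_2u^2$ for $\partial_1u^1$, exactly as in the proof of Lemma A.5.

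The key estimates are then anisotropic Hölder bounds of the type already used in Lemmas A.5 and A.6. For a generic term $\int f\,g\,h$ with $f,g,h$ among $\{\partial_1u^\bullet,\partial_2u^\bullet\}$ I would put the factor carrying a horizontal derivative in $L^\infty_h(L^2_v)$ or $L^2_h(L^\infty_v)$ via Lemma A.4, and the remaining two in $L^2$. Concretely, Lemma A.4 gives $\|g\|_{L^2_h(L^\infty_v)}^2\leqslant C(\|g\|_{L^2}\|\partial_2 g\|_{L^2}+\|g\|_{L^2}^2)$ and $\|f\|_{L^\infty_h(L^2_v)}^2\leqslant C(\|f\|_{L^2}\|\partial_1 f\|_{L^2}+\|f\|_{L^2}^2)$; applied with $f,g$ equal to first derivatives of $u$, the factors $\partial_1 f$, $\partial_2 g$ become second derivatives of $u$, and after using $\operatorname{div}u=0$ one arranges that only $\partial_1\partial_2 u$ (never $\partial_1^2 u$ or $\partial_2^2 u$) appears. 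This produces bounds of the form $\|\partial_2 u\|_{L^2}^2\cdot(\text{powers of }\|\partial_1 u\|_{L^2},\|\partial_1\partial_2 u\|_{L^2},\|\partial_2 u\|_{L^2})$ with the $\|\partial_1\partial_2 u\|_{L^2}$ appearing with exponent at most $1$ in each monomial. A final application of Young's inequality (splitting off $a\|\partial_1\partial_2 u\|_{L^2}^2$ with the conjugate exponent absorbing the remaining factors into $C(1+\|\partial_1 u\|_{L^2}^2)\|\partial_2 u\|_{L^2}^2$, after noting $\|\partial_2 u\|_{L^2}^2$ is already a factor and any leftover $\|\partial_2 u\|_{L^2}$ powers can be dominated using $\|\partial_2 u\|_{L^2}\lesssim 1+\|\partial_2 u\|_{L^2}^2$ inside the low-order terms, or more carefully by keeping the exponent bookkeeping tight) yields the claimed inequality.

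The main obstacle is the bookkeeping of derivative exponents: one must check that in every term the vertical second derivative $\partial_2^2 u$ and the horizontal second derivative $\partial_1^2 u$ can be eliminated (using $\partial_2u^2=-\partial_1u^1$ and integration by parts when needed), so that only $\partial_1 u$, $\partial_2 u$ and $\partial_1\partial_2 u$ survive, and that $\partial_1\partial_2 u$ enters each monomial to a power $\le 1$ so Young's inequality can extract $a\|\partial_1\partial_2 u\|_{L^2}^2$ with an arbitrarily small $a$. This is the same mechanism that makes Lemma A.5 work, and the computation here is in fact a sub-computation of that proof (the $H^{0,1}$-energy estimate), so no genuinely new idea is required beyond careful use of Lemmas A.3--A.6 and the incompressibility constraint.
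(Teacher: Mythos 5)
Your reduction is fine and essentially the paper's: $\langle\partial_2u,u\cdot\nabla\partial_2u\rangle=0$ by incompressibility, and after your substitution $\partial_2u^2=-\partial_1u^1$ in the $i=2$ group the surviving integrals coincide with the ones the paper keeps. The gap is in the estimation scheme. After your substitution, the $j=1$ contributions are $\int(\partial_2u^1)^2\partial_1u^1\,dx$ (from $i=1$) and $-\int(\partial_2u^1)^2\partial_1u^1\,dx$ (from $i=2$). If you estimate each such term separately, as your "generic term $\int fgh$" prescription says, the only admissible H\"older assignment is $\partial_2u^1\in L^\infty_h(L^2_v)$, $\partial_1u^1\in L^2_h(L^\infty_v)$, $\partial_2u^1\in L^2$ (any other choice forces $\partial_1^2u$ or $\partial_2^2u$ through Lemma \ref{estimate for anisotropic spaces}); the cross monomial is then $\|\partial_1\partial_2u\|_{L^2}\|\partial_1u\|_{L^2}^{1/2}\|\partial_2u\|_{L^2}^{3/2}$, and Young's inequality leaves $C\|\partial_1u\|_{L^2}\|\partial_2u\|_{L^2}^{3}$, which is \emph{not} dominated by $C(1+\|\partial_1u\|_{L^2}^2)\|\partial_2u\|_{L^2}^2$. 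Your fallback ``$\|\partial_2u\|_{L^2}\lesssim 1+\|\partial_2u\|_{L^2}^2$'' does not repair this: it manufactures a $\|\partial_2u\|_{L^2}^4$-type term that is absent from the right-hand side. The missing idea is that these two $j=1$ terms must be seen to \emph{cancel} (in the paper's grouping this is the pointwise identity $(\partial_2u^1)^2(\partial_1u^1+\partial_2u^2)=0$); it is a necessary cancellation, not a bookkeeping convenience.

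A second, related point concerns the genuinely surviving terms $\int\partial_2u^2\,\partial_2u^1\,\partial_1u^2$ and $\int(\partial_2u^2)^3=-\int(\partial_2u^2)^2\partial_1u^1$. There the dangerous cross monomial after Lemma \ref{estimate for anisotropic spaces} is $\|\partial_1\partial_2u\|_{L^2}\|\partial_1u\|_{L^2}^{1/2}\|\partial_2u\|_{L^2}^{1/2}\|\partial_2u^2\|_{L^2}$, and extracting $a\|\partial_1\partial_2u\|_{L^2}^2$ by Young requires the companion factor to be at most $C(1+\|\partial_1u\|_{L^2})\|\partial_2u\|_{L^2}$. This only works because the plain-$L^2$ factor is specifically $\partial_2u^2$, so one can invoke div-freeness \emph{a second time} and write $\|\partial_2u^2\|_{L^2}=\|\partial_1u^1\|_{L^2}^{1/2}\|\partial_2u^2\|_{L^2}^{1/2}$ (the displayed trade in the paper's proof). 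Your prescription leaves the choice of the $L^2$ factor open and never identifies this step, and without it the same supercritical term $\|\partial_1u\|_{L^2}\|\partial_2u\|_{L^2}^3$ reappears. (Minor: ``the remaining two in $L^2$'' is not a valid H\"older split; you need two mixed-norm factors and one $L^2$ factor, as your subsequent sentence in fact uses.) Once you add the $j=1$ cancellation and this second use of $\partial_2u^2=-\partial_1u^1$ on the $L^2$ factor, your argument becomes the paper's proof.
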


\begin{proof}
We have
$$\langle \partial_2u, \partial_2(u\cdot \nabla u)\rangle= \langle \partial_2u^1, \partial_2(u\cdot \nabla u^1)\rangle+\langle \partial_2u^2, \partial_2(u\cdot \nabla u^2)\rangle,$$
where $u=(u^1,u^2)$.

For the first term on the right hand side, we have
\begin{align*}
\langle \partial_2u^1, \partial_2(u\cdot \nabla u^1)\rangle=&\langle \partial_2u^1, \partial_2(u^1\partial_1u^1+u^2\partial_2u^1)\rangle\\
=&\langle \partial_2u^1, \partial_2u^1\partial_1u^1\rangle+\langle \partial_2u^1, u^1\partial_2\partial_1u^1\rangle\\
&+\langle \partial_2u^1, \partial_2u^2\partial_2u^1\rangle+\langle \partial_2u^1, u^2\partial_2^2u^1\rangle\\
=& \langle \partial_2u^1, u^1\partial_2\partial_1u^1\rangle+\langle \partial_2u^1, u^2\partial_2^2u^1\rangle\\
=&\langle \partial_2u^1, u\cdot\nabla\partial_2u^1\rangle\\
=&-\frac{1}{2}\int \text{div }u|\partial_2u^1|^2dx\\
=&0,
\end{align*}
where we use the fact  $\text{div }u=0$ in the third and sixth equality.

Similarly, for the second term, we have
\begin{align*}
\langle \partial_2u^2, \partial_2(u\cdot \nabla u^2)\rangle=&\langle \partial_2u^2, \partial_2u^1\partial_1u^2\rangle+\langle \partial_2u^2, u^1\partial_2\partial_1u^2\rangle\\
&+\langle \partial_2u^2, \partial_2u^2\partial_2u^2\rangle+\langle \partial_2u^2, u^2\partial_2^2u^2\rangle\\
=& \langle \partial_2u^2, \partial_2u^1\partial_1u^2\rangle +\frac{1}{2}\int u^1\partial_1(\partial_2u^2)^2dx\\
&+\langle \partial_2u^2, \partial_2u^2\partial_2u^2\rangle+\frac{1}{2}\int  u^2\partial_2(\partial_2u^2)^2dx\\
=& \langle \partial_2u^2, \partial_2u^1\partial_1u^2\rangle +\langle \partial_2u^2, \partial_2u^2\partial_2u^2\rangle\\
&-\frac{1}{2}\langle \partial_2u^2, \partial_1u^1\partial_2u^2\rangle-\frac{1}{2}\langle \partial_2u^2, \partial_2 u^2\partial_2u^2\rangle\\
=& \langle \partial_2u^2, \partial_2u^1\partial_1u^2\rangle +\langle \partial_2u^2, \partial_2u^2\partial_2u^2\rangle,
\end{align*}
where we use $\text{div } u=0$ in the last equality.

Then by Lemma \ref{estimate for anisotropic spaces} we have
\begin{align*}
&|\langle \partial_2u, \partial_2(u\cdot \nabla u)\rangle|\\
=&|\langle \partial_2u^2, \partial_2u^1\partial_1u^2\rangle +\langle \partial_2u^2, \partial_2u^2\partial_2u^2\rangle|\\
\leqslant &\left(\|\partial_2u^1\|_{L^{\infty}_h(L^2_v)}\|\partial_1u^2\|_{L^2_h(L^{\infty}_v)}+\|\partial_1u^1\|_{L^2_h(L^{\infty}_v)}\|\partial_2u^2\|_{L^{\infty}_h(L^2_v)}\right)\|\partial_2u^2\|_{L^2}\\
\leqslant & C\left(\|\partial_2u\|_{L^2}+\|\partial_2u\|^{\frac{1}{2}}_{L^2}\|\partial_1\partial_2u\|^{\frac{1}{2}}_{L^2}\right)\left(\|\partial_1u\|_{L^2}+\|\partial_1u\|^{\frac{1}{2}}_{L^2}\|\partial_1\partial_2u\|^{\frac{1}{2}}_{L^2}\right)\|\partial_2u^2\|_{L^2}\\
\leqslant &C\|\partial_1u\|_{L^2}\|\partial_2u\|^2_{L^2}+C\|\partial_1\partial_2u\|_{L^2}\|\partial_1u\|_{L^2}\|\partial_2u\|_{L^2}\\
&+C\|\partial_1\partial_2u\|^{\frac{1}{2}}_{L^2}\left(\|\partial_1u\|_{L^2}\|\partial_2u\|^{\frac{1}{2}}_{L^2}+\|\partial_2u\|_{L^2}\|\partial_1u\|^{\frac{1}{2}}_{L^2}\right)\|\partial_2u^2\|_{L^2},
\end{align*}
where we use the following inequality in the last inequality:
\begin{align*}
&\|\partial_2 u\|^{\frac{1}{2}}_{L^2}\|\partial_1\partial_2u\|_{L^2}\|\partial_1u\|^{\frac{1}{2}}_{L^2}\|\partial_2u^2\|_{L^2}\\
=&\|\partial_2 u\|^{\frac{1}{2}}_{L^2}\|\partial_1\partial_2u\|_{L^2}\|\partial_1u\|^{\frac{1}{2}}_{L^2}\|\partial_1u^1\|^{\frac{1}{2}}_{L^2}\|\partial_2u^2\|^{\frac{1}{2}}_{L^2}\\
\leqslant&\|\partial_1\partial_2u\|_{L^2}\|\partial_1u\|_{L^2}\|\partial_2u\|_{L^2},
\end{align*}
where we use $\text{div } u=0$ in the first equality.

By Young's inequality, we have
$$C\|\partial_1\partial_2u\|_{L^2}\|\partial_1u\|_{L^2}\|\partial_2u\|_{L^2}\leqslant \frac{a}{2}\|\partial_1\partial_2u\|^2_{L^2}+C\|\partial_1u\|^2_{L^2}\|\partial_2u\|^2_{L^2},$$
and
\begin{align*}
&C\|\partial_1\partial_2u\|^{\frac{1}{2}}_{L^2}\left(\|\partial_1u\|_{L^2}\|\partial_2u\|^{\frac{1}{2}}_{L^2}+\|\partial_2u\|_{L^2}\|\partial_1u\|^{\frac{1}{2}}_{L^2}\right)\|\partial_2u^2\|_{L^2}\\
\leqslant& \frac{a}{2}\|\partial_1\partial_2u\|^{2}_{L^2}+C\left(\|\partial_1u\|^{\frac{4}{3}}_{L^2}\|\partial_2u\|^{\frac{2}{3}}_{L^2}+\|\partial_2u\|^{\frac{4}{3}}_{L^2}\|\partial_1u\|^{\frac{2}{3}}_{L^2}\right)\|\partial_2u^2\|^{\frac{4}{3}}_{L^2}\\
\leqslant & \frac{a}{2}\|\partial_1\partial_2u\|^{2}_{L^2}+C\|\partial_1u\|^{\frac{4}{3}}_{L^2}\|\partial_2u\|^{2}_{L^2}+C\|\partial_2u\|^{\frac{4}{3}}_{L^2}\|\partial_1u\|^{\frac{2}{3}}_{L^2}\|\partial_1u^1\|^{\frac{2}{3}}_{L^2}\|\partial_2u^2\|^{\frac{2}{3}}_{L^2}\\
\leqslant& \frac{a}{2}\|\partial_1\partial_2u\|^{2}_{L^2}+C(1+\|\partial_1u\|^{2}_{L^2})\|\partial_2u\|^{2}_{L^2},
\end{align*}
where we use $\text{div }u=0$ in the second inequality.

Thus we deduce that 
$$|\langle \partial_2u, \partial_2(u\cdot \nabla u)\rangle|\leqslant a\|\partial_1\partial_2 u\|^2_{L^2}+C(1+\|\partial_1 u\|^2_{L^2})\|\partial_2 u\|^2_{L^2}.$$

\end{proof}

\bibliography{Ref0}{}  
\bibliographystyle{alpha}

\end{document}